\newcolumntype{C}[1]{>{\centering\arraybackslash}m{#1\linewidth}}
\newcommand{\arxiv}[1]{\href{http://arxiv.org/abs/#1}{\texttt{arXiv:#1}}}
\newcommand{\G}{\ensuremath{\Gamma}}
\newcommand{\D}{\ensuremath{\mathbb{D}}}
\newcommand{\Z}{\ensuremath{\mathbb{Z}}}
\newtheorem{definition}{Definition}
\newtheorem{theorem}[definition]{Theorem}
\newtheorem{proposition}[definition]{Proposition}
\newtheorem{lemma}[definition]{Lemma}
\newtheorem{corollary}[definition]{Corollary}
\newcommand{\Aut}{\ensuremath{\operatorname{Aut}}}
\newcommand{\Sym}{\ensuremath{\mathbb{S}}}
\newcommand{\GP}{\ensuremath{\operatorname{GP}}}
\newcommand{\PGL}{\ensuremath{\operatorname{PGL}}}
\newcommand{\GL}{\ensuremath{\operatorname{GL}}}
\newcommand{\out}{\ensuremath{\operatorname{out}}}
\newcommand{\la}{\langle}
\newcommand{\ra}{\rangle}
\newcommand{\V}{\ensuremath{\mathcal{V}}}
\newcommand{\E}{\ensuremath{\mathcal{E}}}
\newcommand{\Cay}{\ensuremath{\operatorname{Cay}}}
\newcommand{\Tr}{\ensuremath{\operatorname{Tr}}}
\newcommand{\SDW}{\ensuremath{\operatorname{SDW}}}
\newcommand{\A}{\ensuremath{\mathcal{A}}}
\newcommand{\M}{\ensuremath{\mathcal{M}}}
\newcommand{\T}{\ensuremath{\mathcal{T}}}
\newcommand{\lr}{\leftrightarrow}
\newcommand{\mm}{\!-\!}
\newcommand{\pp}{\!+\!}
\newcommand{\grp}[2]{\ensuremath{\left\la #1 \; \middle| \; #2 \right\ra}}
\begin{document}

\title[Cubic vertex-transitive graphs of girth six]{Cubic vertex-transitive graphs of girth six}

\author[P.\ Poto\v{c}nik]{Primo\v{z} Poto\v{c}nik}
\address{Primo\v{z} Poto\v{c}nik,\newline
Faculty of Mathematics and Physics, University of Ljubljana, \newline Jadranska 19, SI-1000 Ljubljana, Slovenia;
\newline also affiliated with: \newline
 IMFM,
 Jadranska 19, SI-1000 Ljubljana, Slovenia.
 }
\email{primoz.potocnik@fmf.uni-lj.si}

\thanks{Supported in part by the Slovenian Research Agency, projects J1-1691, P1-0285, and P1-0294}


\author[J.\ Vidali]{Jano\v{s} Vidali}
\address{Jano\v{s} Vidali,\newline
Faculty of Mathematics and Physics, University of Ljubljana, \newline Jadranska 19, SI-1000 Ljubljana, Slovenia.
\newline also affiliated with: \newline
 IMFM,
 Jadranska 19, SI-1000 Ljubljana, Slovenia.
}
 \email{janos.vidali@fmf.uni-lj.si}

\subjclass[2000]{20B25}
\keywords{cubic graph, vertex-transitive graph, girth-regular graph}

\begin{abstract}
In this paper,
a complete classification of finite simple cubic vertex-transitive graphs
of girth $6$ is obtained.
It is proved that every such graph,
with the exception of the Desargues graph on $20$ vertices,
is either a skeleton of a hexagonal tiling of the torus,
the skeleton of the truncation of an arc-transitive triangulation
of a closed hyperbolic surface,
or the truncation of a $6$-regular graph
with respect to an arc-transitive dihedral scheme.
Cubic vertex-transitive graphs of girth larger than $6$ are also discussed.
\end{abstract}

\maketitle


\section{Introduction}
\label{sec:intro}

Cubic vertex-transitive graph
are one of the oldest themes in algebraic graph theory,
appearing already in the classical work
of Foster~\cite{f32,fb88} and Tutte~\cite{t47},
and retaining the attention of the community until present times
(see, for example, the works of Coxeter, Frucht and Powers~\cite{cfp81},
Djokovi\'{c} and Miller~\cite{djm80},
Lorimer~\cite{l83}, Conder and Lorimer~\cite{cl89},
Glover and Maru\v{s}i\v{c}~\cite{gm07},
Poto\v{c}nik, Spiga and Verret~\cite{psv15},
Hua and Feng~\cite{hf11}, Spiga~\cite{s14},
to name a few of the most influential papers).

The {\em girth} (the length of a shortest cycle)
is an important invariant of a graph
which appears in many well-known graph theoretical problems,
results and formulas.
In many cases,
requiring the graph to have small girth
severely restricts the structure of the graph.


Such a phenomenon can be observed
when one focuses to a family of graphs of small valence
possessing a high level of symmetry.
For example,
arc-transitive $4$-valent graphs of girth at most $4$
were characterised in~\cite{pw07}.
In the case of cubic graphs, even more work has been done.
The structure of cubic arc-transitive graphs of girth at most $7$ and $9$
was studied in~\cite{fn06} and~\cite{cn07}, respectively,
and those of girth $6$ were completely determined in~\cite{km09}.
By requiring more symmetry, some of these results can be pushed further;
for example, in~\cite{m91},
cubic $4$-arc-transitive graphs of girth at most $13$ were classified,
while in~\cite{pmk17},
locally $3$-transitive graphs of girth $4$ are considered.
Recently, two papers appeared where the condition on arc-transitivity
was relaxed (considerably!)~to vertex-transitivity;
namely as a byproduct of the results
proved independently in~\cite{ejs19} and~\cite{pv19},
all cubic vertex-transitive graphs of girth $5$ are known.
There are several, sometimes surprising,
applications of such classification results
(see, for example,~%
\cite{bkw20} for an application in the theory of abstract polytopes,~%
\cite{hikst19} for an application regarding the distinguishing number,
and~\cite{km19} for a connection with the question
of existence of odd automorphisms of graphs).

The purpose of this paper is to extend the above mentioned
classification of cubic vertex-transitive graphs of girth at most $5$
to a significantly more complex situation
of vertex-transitive cubic graphs of girth $6$.
There are three generic sources
of cubic vertex-transitive graphs of girth $6$:
hexagonal tessellations of the torus
with three hexagons meeting at each vertex
(that is, vertex-transitive maps on the torus of type $\{6,3\}$---%
note that all of them are vertex-transitive),
truncations of arc-transitive triangulations of hyperbolic surfaces
(that is,
truncations of arc-transitive maps of type $\{3,\ell\}$ with $\ell \ge 7$),
and truncations of $6$-valent graphs
admitting an arc-transitive group of automorphisms
whose vertex-stabilisers act on the neighbourhoods
either as a cyclic or as a dihedral group of degree $6$
(these objects were dubbed {\em arc-transitive dihedral schemes}
in~\cite{pv19}).
More formal definitions of dihedral schemes and maps will be given
in Sections~\ref{ssec:schemes} and~\ref{ssec:maps}, respectively.

The main result of this paper states that
with the exception of a single graph,
the famous Desargues graph on $20$ vertices
(that can also be defined as the generalised Petersen graph $\GP(10, 3)$),
every cubic vertex-transitive graph of girth $6$
arises in one of the above three ways.
In Theorem~\ref{thm:g6},
we refine this statement
by classifying the cubic vertex-transitive graphs of girth $6$
by their {\em signature},
which, roughly speaking,
encodes the distribution of girth cycles throughout the graph.

Let us make this more precise.
For an edge $e$ of a graph $\Gamma$,
let $\epsilon(e)$ denote the number of girth cycles containing the edge $e$.
Let $v$ be a vertex of $\Gamma$
and let $\{e_1, \ldots, e_k\}$ be the set of edges incident to $v$
ordered in such a way that
$\epsilon(e_1) \le \epsilon(e_2) \le \ldots \le \epsilon(e_k)$.
Following~\cite{pv19},
the $k$-tuple $(\epsilon(e_1), \epsilon(e_2), \ldots, \epsilon(e_k))$
is then called the {\em  signature} of $v$.
A graph $\Gamma$ is called {\em girth-regular}
provided that all of its vertices have the same signature
(and if in addition $\epsilon(e_1) = \ldots = \epsilon(e_k)$,
the graph is called girth-edge-regular; see~\cite{jkm18}).
The signature of a vertex is then called the signature of the graph.
Clearly, every vertex-transitive graph is also girth-regular.

We can now state the main result of the paper.
The exceptional graphs $\Psi_n$, $\Sigma_n$ and $\Delta_n$
appearing in the theorem below are defined in Section~\ref{ssec:special}.

\begin{theorem} \label{thm:g6}
Let $\G$ be a connected  cubic graph.
Then $\G$ is vertex-transitive and has girth $6$
if and only if $\G$ is one of the following:
\begin{enumerate}[(a)]
\item the skeleton of a
map of type $\{6, 3\}$ on a torus,
with signature
    \begin{itemize}
    \item $(8, 8, 8)$ for $\Psi_7$ (Heawood graph),
    \item $(6, 6, 6)$ for $\Psi_8$ (Möbius-Kantor graph),
    \item $(4, 5, 5)$ for $\Psi_9 \cong \Delta_3$,
    \item $(4, 4, 4)$ for $\Sigma_3$ (Pappus graph),
    \item $(3, 4, 5)$ for $\Psi_n$ with $n \ge 10$,
    \item $(2, 3, 3)$ for $\Delta_n$ and $\Sigma_n$ with $n \ge 4$, and
    \item $(2, 2, 2)$ otherwise;
    \end{itemize}
\item the skeleton of the truncation
of an arc-transitive map of type $\{3, \ell\}$ with $\ell \ge 7$,
with signature $(1, 1, 2)$;
\item the truncation of a $6$-regular graph $\hat{\G}$
with respect to an arc-transitive dihedral scheme,
with signature $(0, 1, 1)$; or
\item the Desargues graph with signature $(4, 4, 4)$.
\end{enumerate}
\end{theorem}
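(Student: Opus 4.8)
The plan is to prove the two directions separately, with the ``if'' direction being essentially a bookkeeping exercise and the ``only if'' direction carrying the bulk of the work. For the ``if'' direction, I would take each of the four families in turn and verify that the corresponding graph is connected, cubic, vertex-transitive, has girth exactly $6$, and exhibits the claimed signature. For families (a) and (b) this amounts to analysing the local structure of hexagonal torus maps and of truncations of triangulations---in a triangulation of type $\{3,\ell\}$ the truncation replaces each vertex by an $\ell$-gon and each original triangle by a hexagon, and one checks directly that the original triangles give the only girth cycles when $\ell \ge 7$, yielding signature $(1,1,2)$. For family (c) one uses the definition of truncation with respect to an arc-transitive dihedral scheme from Section~\ref{ssec:schemes}: the girth cycles are exactly the ``cells'' of the scheme, giving signature $(0,1,1)$. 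The Desargues graph in (d) is handled by a finite check. The signature refinements within family (a) (the seven subcases) require computing, for each small exceptional graph $\Psi_7,\Psi_8,\Psi_9\cong\Delta_3,\Sigma_3$, and for the generic families $\Psi_n$, $\Delta_n$, $\Sigma_n$, how many hexagons pass through each edge; this is a routine but careful enumeration in the torus map, distinguishing the degenerate small cases where extra short cycles appear.

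For the ``only if'' direction, suppose $\G$ is a connected cubic vertex-transitive graph of girth $6$. The key device is the signature: since $\G$ is vertex-transitive it is girth-regular, so it has a well-defined signature $(\epsilon_1,\epsilon_2,\epsilon_3)$ with $\epsilon_1\le\epsilon_2\le\epsilon_3$. First I would bound the entries: a short combinatorial count (looking at the ball of radius $3$ around an edge in a cubic graph of girth $6$) shows $\epsilon_3$ is bounded by a small constant---at most $8$---so there are only finitely many possible signatures to consider. Then the strategy is to go through the admissible signatures and, for each, pin down the structure. The case analysis splits naturally according to whether $\epsilon_1=0$, $\epsilon_1=1$, or $\epsilon_1\ge 2$.

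The heart of the argument is the structural dichotomy driven by $\epsilon_1$. When $\epsilon_1=0$, there is an edge through which no girth cycle passes; contracting or rather ``un-truncating'' along such edges---grouping vertices into the natural blocks and using vertex-transitivity to show these blocks form a block system---should recover a $6$-valent graph $\hat\G$ together with a dihedral (or cyclic) scheme, placing $\G$ in family (c); the girth-$6$ condition forces the scheme to be arc-transitive. When $\epsilon_1=1$, one shows the edges with $\epsilon=1$ come in pairs forming disjoint triangles after a local move, so that $\G$ is the truncation of a graph which, by the girth and symmetry constraints, must be an arc-transitive triangulation of type $\{3,\ell\}$, $\ell\ge 7$, landing in family (b). The remaining case $\epsilon_1\ge 2$ is the genuinely hard one: here every edge lies on at least two hexagons, the hexagons tile a surface, and one must identify this surface. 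The Euler-characteristic/local-structure analysis forces the surface to be the torus (type $\{6,3\}$), except that the small-girth coincidences allow the Desargues graph to sneak in as the unique exception---recognising and isolating the Desargues graph is the main obstacle, since it requires ruling out all other sporadic configurations with $\epsilon_1\ge 2$ that are not toroidal, presumably via a delicate analysis of the possible vertex-stabiliser actions and a bound on the number of vertices forcing a finite computer-assisted check. Once the surface is identified as a torus, the classification of vertex-transitive $\{6,3\}$-maps (all of which are vertex-transitive, as noted in the introduction) completes family (a), and matching signatures to the subfamilies $\Psi_n,\Delta_n,\Sigma_n$ finishes the proof.
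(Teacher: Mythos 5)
Your overall frame (girth-regularity, a finite list of signatures with $c\le 8$, quoting the truncation theorems for $(0,1,1)$ and $(1,1,2)$, and Theorem~\ref{thm:abc2} plus Wilson's Klein-bottle result for $(2,2,2)$) matches the paper, but there is a genuine gap in the case you yourself call the heart of the argument, namely $\epsilon_1\ge 2$. Your claim that ``every edge lies on at least two hexagons, the hexagons tile a surface, and one must identify this surface'' is only valid when every edge lies on \emph{exactly} two girth cycles, i.e.\ for the signature $(2,2,2)$; for the signatures $(2,3,3)$, $(3,4,5)$, $(4,4,4)$, $(4,5,5)$, $(6,6,6)$, $(8,8,8)$ there are more than two hexagons through some edge, so no map is obtained by gluing disks along girth cycles, and no Euler-characteristic argument applies. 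Consequently your route does not produce the families $\Psi_n$, $\Sigma_n$, $\Delta_n$ with their signatures, does not show that signatures such as $(5,5,6)$, $(4,4,6)$ or $(a,b,7)$ cannot occur, and does not isolate the Desargues graph; deferring this to ``a delicate analysis of vertex-stabiliser actions and a finite computer-assisted check'' is not a proof, and in fact the paper uses no computation here. What the paper actually does for $3\le c\le 7$ is an explicit local reconstruction: starting from the ball of radius $3$ around an edge (Figure~\ref{fig:g6}(a)), it introduces the arc-type invariant $T(uv)$, uses vertex-transitivity to force the existence of an automorphism swapping the two endpoints of some edge (Lemma~\ref{lem:swap}), and then grows the graph edge by edge, either reaching a contradiction or completing it to Heawood, M\"obius--Kantor, $\Psi_9$, $\Psi_n$, Pappus, Desargues, $\Delta_n$ or $\Sigma_n$ (Lemmas~\ref{lem:g6c7}--\ref{lem:g6c3}); the toroidal embeddings of these exceptional graphs are then exhibited directly via their Cayley-graph descriptions, not deduced from a tiling argument.

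A secondary inaccuracy: your trichotomy by $\epsilon_1$ suggests that $\epsilon_1=1$ always leads to family (b), but admissible signatures such as $(1,2,3)$ and $(1,4,5)$ satisfy the parity and triangle conditions of Lemma~\ref{lem:eventriineq} and must be excluded by separate arguments (the paper rules them out inside Lemmas~\ref{lem:g6c5} and~\ref{lem:g6c3}); only $(1,1,2)$ survives and is handled by Theorem~\ref{thm:trieq1}. The ``if'' direction as you describe it is fine and corresponds to the computations in Section~\ref{ssec:special}.
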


Note that all maps of type $\{6,3\}$ on the torus are known
and have been classified independently by several authors
(see for example~\cite{a73,hopiw12,k86,s97,t91})
and two recent surveys of some of these classifications
have appeared in~\cite{a20,adks20}.
It is not difficult to see that every
toroidal map of type $\{6,3\}$ is vertex-transitive.
In fact, as was shown in~\cite{ad09},
all of them are Cayley graphs on generalised dihedral groups.

As a byproduct of Theorem~\ref{thm:g6}, together with~\cite[Theorem~1.5]{pv19},
where cubic vertex-transitive graphs of girth $5$ are classified,
we obtain the following refinement
of the classification of maps of type $\{6,3\}$
(hexagonal tessellations) on the torus:

\begin{corollary}
Let $\Gamma$ be the skeleton of a map of type $\{6,3\}$ on the torus.
If $\Gamma$ has no cycles of length less than $6$,
then either $\Gamma$ is one of the graphs $\Psi_n$ with $n\ge 7$,
$\Sigma_n$ with $n\ge 3$, $\Delta_n$ with $n\ge 4$,
or the only $6$-cycles of $\Gamma$ are the face cycles.
\end{corollary}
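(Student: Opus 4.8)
The plan is to read the Corollary off Theorem~\ref{thm:g6} by a short signature count, together with one classical fact identifying the Desargues graph. Let $\Gamma$ be the skeleton of a map of type $\{6,3\}$ on the torus with no cycle shorter than $6$. Since three faces meet at every vertex, $\Gamma$ is cubic, and (as recalled right after Theorem~\ref{thm:g6}, or by~\cite{ad09}) it is vertex-transitive. Each face boundary is a $6$-cycle, so $\girth(\Gamma)\le 6$, and the hypothesis upgrades this to $\girth(\Gamma)=6$. Thus Theorem~\ref{thm:g6} applies: $\Gamma$ is one of the graphs listed in items (a)--(d) there, and it remains to locate it.

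The key observation is that in the given embedding every edge $e$ lies on exactly two distinct faces, each of which is a girth cycle through $e$; hence $\epsilon(e)\ge 2$ for all $e$, so every entry of the signature of $\Gamma$ is at least $2$. This eliminates case (b), with signature $(1,1,2)$, and case (c), with signature $(0,1,1)$. To eliminate case (d) I would invoke~\cite{ad09}, by which the skeleton of any toroidal $\{6,3\}$ map is a Cayley graph, together with the classical fact that the Desargues graph $\GP(10,3)$ is not a Cayley graph (recall that $\GP(n,k)$ is a Cayley graph only when $k^2\equiv 1\pmod n$, whereas $3^2\equiv -1\pmod{10}$). Therefore $\Gamma$ falls into case (a): either $\Gamma$ is one of $\Psi_n$ with $n\ge 7$, $\Sigma_n$ with $n\ge 3$, or $\Delta_n$ with $n\ge 4$ (bearing in mind $\Delta_3\cong\Psi_9$), or else $\Gamma$ has signature $(2,2,2)$.

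It then remains to treat the last possibility, which is where the face-cycle conclusion comes from: if the signature is $(2,2,2)$ then $\epsilon(e)=2$ for every edge $e$, and since the two faces through $e$ are already two distinct girth cycles through $e$, they must be the only ones; as every $6$-cycle of $\Gamma$ contains an edge, it has to coincide with one of the two faces incident to that edge, so every $6$-cycle is a face cycle. The only part of this that is not pure bookkeeping is the exclusion of the Desargues graph in case (d), and that is where I expect the argument to need genuine external input; everything else follows mechanically from Theorem~\ref{thm:g6}. (For the full ``refinement'' of the classification of toroidal $\{6,3\}$ maps one also wants the girth-$5$ maps, which are exactly those in~\cite[Theorem~1.5]{pv19}, the girth-$\le 4$ case being trivial.)
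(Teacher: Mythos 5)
Your argument is correct, and its core is the same as the paper's (implicit) one: apply Theorem~\ref{thm:g6} to the skeleton, which is cubic, vertex-transitive and of girth exactly $6$, and read the conclusion off the signature data, with the ``only face cycles'' alternative coming from the signature $(2,2,2)$ exactly as you say. The one place where you genuinely diverge is the Desargues graph. The paper does not need any external input here: clause (a) of Theorem~\ref{thm:g6} already records the signature of \emph{every} girth-$6$ skeleton of a toroidal $\{6,3\}$ map (and the wrap-up of its proof notes that, among all graphs produced by Lemmas~\ref{lem:g6c7}--\ref{lem:g6c3}, the Desargues graph is the only one admitting no such embedding), so a toroidal $\{6,3\}$ skeleton that is none of $\Psi_n$, $\Sigma_n$, $\Delta_n$ has signature $(2,2,2)$ and cases (b)--(d) never need to be discussed. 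Your alternative -- combining \cite{ad09} (toroidal $\{6,3\}$ skeletons are Cayley) with the classical fact that $\GP(n,k)$ is Cayley iff $k^2\equiv 1\pmod n$, so $\GP(10,3)$ is not -- is a valid independent way to close that loophole, at the cost of importing a nontrivial external theorem; it also serves as a sanity check on the paper's claim about the Desargues graph. Two small points worth tightening: the face walks are cycles only because the map is polyhedral and there are no cycles shorter than $6$ (a closed walk of length $6$ with distinct edges decomposes into cycles of length $\ge 6$, hence is a $6$-cycle), and to get $\epsilon(e)\ge 2$ you should note that the two faces through an edge have \emph{distinct} boundary cycles, which follows since at a trivalent vertex one of the two faces through $e$ contains the third edge at that vertex. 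Your parenthetical remark about \cite[Theorem~1.5]{pv19} is also accurate: it is needed only for the girth-$5$ part of the refinement, not for the corollary as stated.
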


In Section~\ref{sec:def},
the necessary definitions and auxiliary results are stated.
Section~\ref{sec:g6} is devoted to the proof of Theorem~\ref{thm:g6},
while in Section~\ref{sec:g7},
cubic vertex-transitive graphs of girth larger than $6$ are discussed.

\section{Definitions and notation}
\label{sec:def}

\subsection{Graphs}
\label{ssec:graphs}

Even though we are mainly interested in simple graphs,
it will prove convenient to allow graphs to have loops and parallel edges.
For this reason,
define a {\em graph} as a triple $(V,E,\partial)$,
where $V$ and $E$ are the {\em vertex-set}
and the {\em edge-set} of the graph,
and $\partial \colon E \to \{ X : X \subseteq V, |X| \le 2\}$
is a mapping that maps an edge to the set of its end-vertices.
If $|\partial(e)| = 1$, then $e$ is a {\em loop}.
Two edges $e$ and $e'$ are {\em parallel} if $\partial(e) = \partial(e')$.
Graphs with no loops and parallel edges are {\em simple}
and can be thought of in the usual manner as a pair $(V,\sim)$,
where $V$ is the vertex-set
and $\sim$ is an irreflexive symmetric adjacency relation on $V$.

The vertex-set and the edge-set of a graph $\Gamma$
are denoted by $\V(\Gamma)$ and $\E(\Gamma)$, respectively.
Further, we let each edge consist of two mutually inverse {\em arcs},
each of the two arcs having one of the end-vertices as its {\em tail}.
For an arc $s$, we denote its inverse by $s^{-1}$.
The {\em head} of an arc $s$ is defined as the tail of $s^{-1}$.
The set of arcs of a graph $\G$ is denoted by $\A(\G)$,
and the set of the arcs with their tail being a specific vertex $u$
by $\out_\G(u)$.
The valence of a vertex $u$ is defined as
the cardinality of $\out_\G(u)$.

An {\em automorphism} of a graph $\Gamma:=(V,E,\partial)$
is a permutation $\alpha$ of $V \cup E$ preserving $V$ and $E$
and satisfying $\partial(e^\alpha) = \{u^\alpha, v^\alpha\}$
for every edge $e\in E$ such that $\partial(e) = \{u, v\}$.
As usual,
we denote the group of all automorphisms of $\Gamma$ by $\Aut(\Gamma)$.

If $\Gamma$ is a simple graph,
then each automorphism of $\Gamma$
is uniquely determined by its action on $\V(\Gamma)$,
so we may think of it as an adjacency-preserving permutation of $\V(\Gamma)$.
Observe that every automorphism of $\Gamma$
induces a permutation of $\A(\Gamma)$.
If $G$ is a subgroup of $\Gamma$ that acts transitively on vertices,
edges or arcs of $\Gamma$,
then we say that $\Gamma$ is $G$-vertex-transitive,
$G$-edge-transitive or $G$-arc-transitive, respectively,
with the prefix $G$ typically omitted if $G=\Aut(\Gamma)$.

\subsection{Dihedral schemes}
\label{ssec:schemes}

Following~\cite{pv19},
a {\em dihedral scheme} on a graph $\G$
is an irreflexive symmetric relation $\lr$ on $\A(\G)$ such that
the simple graph $(\A(\G),\lr)$ is a $2$-regular graph
each of whose connected components
is the set $\out_\G(u)$ for some $u\in \V(\G)$.
Intuitively,
we may think of a dihedral scheme as an arrangement
of arcs around each vertex into a non-oriented cycle.
The group of all automorphisms of $\G$ that preserve the relation $\lr$
will be denoted by $\Aut(\G,\lr)$
and the dihedral scheme $\lr$ is said to be {\em arc-transitive}
if $\Aut(\G,\lr)$ acts transitively on $\A(\G)$.

Given a dihedral scheme $\lr$ on a graph $\G$,
we define the {\em truncation of $\G$ with respect to $\lr$}
as the simple graph $\Tr(\G,\lr)$ whose vertex set is $\A(\G)$,
with two arcs $s,t\in \A(\G)$ adjacent in $\Tr(G,\lr)$
if either $t \lr s$ or $t$ and $s$ are inverse to each other
(see the example in Figure~\ref{fig:trunc}).
Observe that $\Aut(\G,\lr)$
acts as a group of automorphisms of $\Tr(\G,\lr)$,
implying that $\Tr(\G,\lr)$ is vertex-transitive
whenever the dihedral scheme $\lr$ is arc-transitive.

\begin{figure}[t]
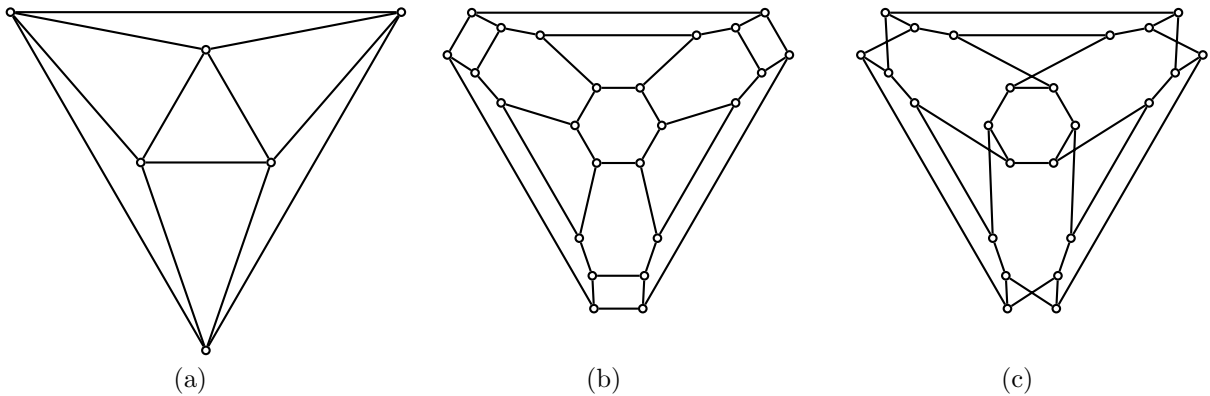

\makebox[\textwidth][c]{
\begin{tabular}{C{0.31}C{0.31}C{0.31}}
\leavevmode%
\beginpgfgraphicnamed{fig-w3}
\input{tikz/w3.tikz}
\endpgfgraphicnamed
&
\leavevmode%
\beginpgfgraphicnamed{fig-w3-trunc1}
\input{tikz/w3-trunc1.tikz}
\endpgfgraphicnamed
&
\leavevmode%
\beginpgfgraphicnamed{fig-w3-trunc2}
\input{tikz/w3-trunc2.tikz}
\endpgfgraphicnamed
\\
(a) & (b) & (c)
\end{tabular}
}
\caption{(a) The octahedral graph, a $4$-regular graph.
(b) The truncation of the octahedral graph with respect to the dihedral scheme
obtained by considering the drawing (a) as a map (i.e., an octahedron).
(c) The truncation of the octahedral graph
with respect to a different dihedral scheme.
Note that in both truncations,
vertices of the graph in (a) have been replaced by $4$-cycles.
}
\label{fig:trunc}
\end{figure}

As was proved in~\cite[Lemma~3.5]{pv19},
arc-transitive dihedral schemes
all arise in the following group theoretical setting.
Let $\Gamma$ be a $G$-arc-transitive graph (possibly with parallel edges)
such that the permutation group $G_u^{\out_\G(u)}$
induced by the action of the vertex-stabiliser $G_u$ on $\out_\Gamma(u)$
is permutation isomorphic to the transitive action of $\D_d$, $\Z_d$,
or (if $d$ is even) $\D_{\frac{d}{2}}$ on $d$ vertices
(here, the symbol $\D_d$ denotes the dihedral group of order $2d$
acting naturally on $d$ points,
while $\Z_d$ is the cyclic group acting transitively on $d$ points).
Fix a vertex $u$ of $\Gamma$ and choose
an adjacency relation $\lr_u$ on $\out_\G(u)$
preserved by $G_u^{\out_\G(u)}$ in such a way
that $(\out_\G(u),\lr_u)$ is a cycle
(note that the assumption on $G_u^{\out_\G(u)}$
implies that such a relation exists).
For every $v\in \V(\G)$,
choose an element $g_v\in G$ such that $v^{g_v} = u$,
and let $\lr_v$ be the relation on ${\out_\G(v)}$
defined by $s \lr_v t$ if and only if $s^{g_v} \lr_u t^{g_v}$.
Then the union $\lr$ of all $\lr_u$ for $u\in V(\G)$
is a dihedral scheme invariant under $G$.

We would like to point out that an equivalent definition of dihedral schemes
and a generalization of the corresponding truncations
was given recently in~\cite{ejs19} (see also~\cite{ej12}).
To obtain a truncation as defined above,
the graph $\Upsilon$ in the definition of the generalised truncation
in~\cite[Section~2]{ejs19} needs to be a cycle.

\subsection{Maps}
\label{ssec:maps}

Topologically,
a map is an embedding of a finite connected graph onto a closed surface
in such a way that when the graph is removed from the surface,
the connected components (called {\em faces})
of what remains are homeomorphic to open disks
whose closures are closed disks.

There are several ways to describe a map combinatorially,
one way being by specifying a set of walks in the graph
that represent the boundaries of the faces of the map.
More precisely,
let $\G$ be a connected graph and let $\T$ be a set of closed walks in $\G$
such that every edge of $\G$ belongs to precisely two walks in $\T$.
We will also require that every edge is traversed
at most once by every walk in $\T$,
even though in the literature often this is not required
(the maps that satisfy our additional conditions
are then sometimes called {\em polyhedral}).
For two arcs $s$ and $t$ with a common tail,
write $s\lr t$ if and only if the underlying edges of $s$ and $t$
are two consecutive edges on a walk in $\T$.
If $\lr$ is a dihedral scheme,
then $(\G,\T)$ is a map with {\em skeleton} $\G$ and {\em face walks} $\T$.
The topological map can then be reconstructed from such a pair $(\G,\T)$
by thinking of the graph as a $1$-dimensional CW complex
and then gluing closed disks along its boundary
homeomorphically to the closed curves in $\G$
represented by elements of $\T$.
The resulting topological space is then a closed surface,
which can be either orientable or non-orientable.

An automorphism of a map $(\G,\T)$
is an automorphism of $\G$ that preserves the set $\T$.
Note that such an automorphism extends
to a homeomorphism of the resulting surface preserving the embedded graph.
The map is called vertex-, edge- or arc-transitive,
provided that its automorphism group acts transitively
on the vertices, edges or arcs of the underlying graph $\G$.
If the graph $\G$ is $k$-regular
and all the closed walks in $\T$ have length $\ell$,
then the map $(\G,\T)$ is said to be of {\em type} $\{\ell,k\}$.

There are two ways in which maps enter the classification
of cubic vertex-transitive graphs of girth $6$.
The first is when the skeleton of a map is a cubic graph
and the faces form the girth cycles,
that is,
when the map has type $\{6,3\}$
and it contains no shorter cycles than the face walks
(in this case, the face walks are cycles,
so we may refer to them as {\em face cycles}).
By computing the genus of the underlying surface using Euler's formula,
one sees that such a map resides either on the Klein bottle or on the torus.
However, as was shown in~\cite{w06},
there are no vertex-transitive maps of type $\{6,3\}$ and girth more than $4$
on the Klein bottle.
On the other hand,
there are numerous toroidal vertex-transitive maps
of type $\{6,3\}$ and girth $6$,
and all of them are vertex-transitive.
As mentioned in Section~\ref{sec:intro},
toroidal maps of type $\{6,3\}$ have been extensively studied
from different angles and have been independently classified several times
(see~\cite{adks20} or~\cite{a20} for recent surveys).

The second way in which maps yield
cubic vertex-transitive graphs of girth $6$
is by taking (the skeleton of) the truncation of an arc-transitive map
of type $\{3,\ell\}$ with $\ell \ge 7$.
Here, the truncation of a map $(\G, \T)$ has the usual meaning --
note that its skeleton is the truncation of the underlying graph $\G$
with respect to the dihedral scheme appearing in the definition of the map.

\subsection{Three special families of toroidal graphs}
\label{ssec:special}

In this section,
we define the graphs $\Psi_n$, $\Sigma_n$ and $\Delta_n$
appearing in Theorem~\ref{thm:g6}.
They are all skeletons of toroidal maps of type $\{6,3\}$,
and unlike other toroidal maps of type $\{6,3\}$,
they possess $6$-cycles other than the face cycles (and no shorter cycles).
We will introduce them as Cayley graphs.
Recall that a Cayley graph $\Cay(G,S)$ on a group $G$
with the connection set $S$, $S\subseteq G\setminus \{1_G\}$, $S=S^{-1}$,
is a simple graph whose vertices are elements of $G$,
with $g,h\in G$ adjacent whenever $gh^{-1} \in S$.

Since a Cayley graph $\Cay(G,S)$ is vertex-transitive,
it is also girth-regular.
One can determine its girth $g$
by finding the length of the shortest nonempty sequence
$(\alpha_1, \alpha_2, \dots, \alpha_g)$
with $\alpha_i \in S$ ($1 \le i \le g$)
such that $\alpha_i \alpha_{i+1} \ne 1_G$ ($1 \le i \le g-1$)
and $\alpha_1 \alpha_2 \cdots \alpha_g = 1_G$.
The signature can then be determined
by identifying all such sequences of length $g$
and counting how many times each element of the connection set
appears as the first symbol in these sequences.

In what follows,
let $\D_d$ denote the dihedral group of order $2d$
acting naturally on $d$ points,
and let $\Z_d$ be a cyclic group of order $d$ acting on $d$ points.
For dihedral groups, we will use the presentation
$\D_n = \grp{\rho, \tau}{\rho^n, \tau^2, (\rho \tau)^2}$
-- i.e., $\rho$ represents a unit rotation,
while $\tau$ represents a reflection of the points around some axis.
For brevity, we denote $\rho_i = \rho^i$ and $\tau_i = \rho^i \tau$,
where indices are modulo $n$.
It is easy to see that for all integers $i, j$,
we have $\rho_i \rho_j = \rho_{i+j}$, $\rho_i \tau_j = \tau_{i+j}$,
$\tau_i \rho_j = \tau_{i-j}$ and $\tau_i \tau_j = \rho_{i-j}$.
For the direct product $\D_n \times \Z_3$,
we abbreviate its member $(\alpha_i, u)$ $(\alpha \in \{\rho, \tau\})$
as $\alpha_i^0$, $\alpha_i^+$ or $\alpha_i^-$ if $u = 0, 1, 2$, respectively.

For a positive integer $n$,
we define the graph
$\Delta_n = \Cay(\D_{3n}, \{\tau_0, \tau_k, \tau_n\})$ of order $6n$,
where $k = 3/\gcd(3, n)$.
The graph $\Delta_n$ is vertex-transitive with girth $6$ for all $n \ge 3$,
with signature $(4, 5, 5)$ if $n = 3$ and $(2, 3, 3)$ otherwise.
For all $n \ge 1$,
the graph $\Delta_n$ admits an embedding onto a torus
with $3n$ hexagonal faces such that the consecutive arcs on each face
correspond to the generators $\tau_0, \tau_k, \tau_n, \tau_0, \tau_k, \tau_n$.
The graphs $\Delta_4$ and $\Delta_5$ are shown
in Figure~\ref{fig:g6c3even}(b) and Figure~\ref{fig:g6c3odd}(b).

For a positive integer $n$,
we next define the graph
$\Sigma_n = \Cay(\D_n \times \Z_3, \{\tau_1^0, \tau_0^+,$ $\tau_0^-\})$
of order $6n$.
The graph $\Sigma_n$ is vertex-transitive with girth $6$ for all $n \ge 3$.
The graph $\Sigma_3$ is the Pappus graph with signature $(4, 4, 4)$,
while for $n \ge 4$, $\Sigma_n$ has signature $(2, 3, 3)$,
For all $n \ge 1$,
the graph $\Sigma_n$ admits an embedding onto a torus
with $3n$ hexagonal faces such that the consecutive arcs on each face
correspond to the generators
$\tau_0^+, \tau_0^+, \tau_1^0, \tau_0^-, \tau_0^-, \tau_1^0$.
The graphs $\Sigma_4$ and $\Sigma_5$ are shown
in Figure~\ref{fig:g6c3even}(a) and Figure~\ref{fig:g6c3odd}(a).
Note also that the graph $\Sigma_n$ is isomorphic to the so-called
{\em split depleted wreath graph} $\SDW(n, 3)$
(cf.~\cite{w02} and~\cite[Construction~11]{psv13})
defined to have the vertex-set $\Z_n \times \Z_3 \times \Z_2$
and edges of two types:
$\{(i, u, 0), (i, u \pm 1, 1)\}$ and $\{(i, u, 1), (i+1, u, 0)\}$,
for all $i \in \Z_n$ and $u \in \Z_3$.
An isomorphism between $\Sigma_n$ and $\SDW(n, 3)$
can be chosen so that it maps
$(\rho_i, u) \mapsto (i, u, 1)$ and $(\tau_i, u) \mapsto (i, u, 0)$
for every $i \in \Z_n$ and $u \in \Z_3$
(see Figure~\ref{fig:SDW}).

\begin{figure}[t]
\leavevmode%
\beginpgfgraphicnamed{fig-sdw}
\input{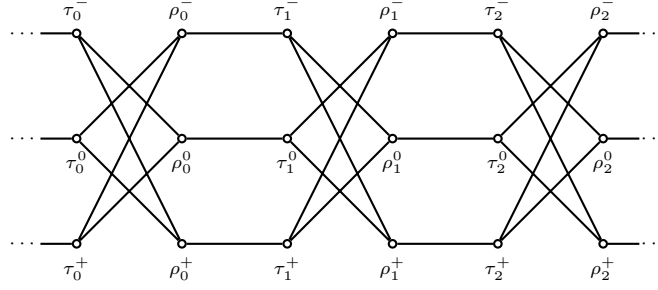}
\endpgfgraphicnamed
\caption{A section of $\Sigma_n \cong \SDW(n, 3)$.}
\label{fig:SDW}
\end{figure}

Finally, for a positive integer $n$ we define the graph
$\Psi_n = \Cay(\D_n, \{\tau_0, \tau_1, \tau_3\})$ of order $2n$.
The graph $\Psi_n$ is vertex-transitive with girth $6$ for all $n \ge 7$.
The graphs $\Psi_7$ and $\Psi_8$ are the Heawood and Möbius-Kantor graphs
with signatures $(8, 8, 8)$ and $(6, 6, 6)$, respectively.
The graph $\Psi_9$ is isomorphic to $\Delta_3$ with signature $(4, 5, 5)$.
For $n \ge 10$, $\Psi_n$ has signature $(3, 4, 5)$.
For all $n \ge 1$,
the graph $\Psi_n$ admits an embedding onto a torus
with $n$ hexagonal faces such that the consecutive arcs on each face
correspond to the generators $\tau_0, \tau_1, \tau_3, \tau_0, \tau_1, \tau_3$.
The graph $\Psi_{10}$ is shown in Figure~\ref{fig:psi}.

\begin{figure}[t]
\leavevmode%
\beginpgfgraphicnamed{fig-g6-c3o}
\input{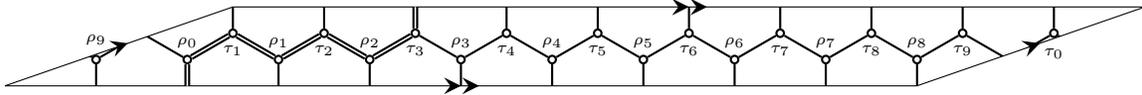}
\endpgfgraphicnamed
\caption{The graph $\Psi_{10}$ embedded on a torus.
The double edges show one of the $6$-cycles
which do not correspond to a face of the embedding.}
\label{fig:psi}
\end{figure}

Let us now determine the full groups of automorphisms of these graphs.
We first give a lemma which will be useful in determining their automorphisms.

\begin{lemma} \label{lem:autcay}
Let $i, j, k, m$ be distinct integers
with $0 \le i, j, k < m$, $|j-k| \ne m/2$ and $\gcd(i, j, k, m) = 1$,
and let $\G = \Cay(\D_m, \{\tau_i, \tau_j, \tau_k\})$.
Define $A_h$ ($h \in \{i, j, k\}$) to be the set of arcs of $\G$
corresponding to the generator $\tau_h$.
Suppose that $\varphi$ is an automorphism of the graph $\G$
that fixes the set $A_i$.
Then either $\varphi$ fixes the sets $A_j$ and $A_k$, or swaps them.
\end{lemma}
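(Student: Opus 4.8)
The plan is to show that $\varphi$ permutes the three parallel classes $A_i,A_j,A_k$ of $\G$; since it fixes $A_i$, it must then either fix $A_j$ and $A_k$ or interchange them. Because $\tau_i,\tau_j,\tau_k$ are involutions, every vertex of $\G$ is the tail of exactly one arc from each of $A_i,A_j,A_k$, so the underlying edge sets form a proper $3$-edge-colouring of the cubic graph $\G$, and the edges of colour $j$ or $k$ together are precisely the edges not of colour $i$. As $\varphi$ is a graph automorphism fixing $A_i$, it fixes both the set of colour-$i$ edges and the set of colour-$j$-or-$k$ edges; in particular it sends the unique colour-$i$ edge at any vertex $g$ to the unique colour-$i$ edge at $g^\varphi$, and hence sends the unordered pair of colour-$j$ and colour-$k$ edges at $g$ to the corresponding pair at $g^\varphi$. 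This produces a bijection $c_g\colon\{j,k\}\to\{j,k\}$, so $c_g\in\{\id,(j\,k)\}$, where $c_g(h)$ is the colour of the $\varphi$-image of the colour-$h$ edge at $g$; the lemma amounts to the assertion that $c_g$ does not depend on $g$.

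First I would observe that $c_g$ is locally constant: if $g$ and $g'$ are joined by an edge $e$ of colour $j$ (resp.\ $k$), then $e$ is the colour-$j$ (resp.\ $k$) edge at \emph{both} $g$ and $g'$, so comparing the colour of $\varphi(e)$ as seen at $g^\varphi$ and at $g'^\varphi$ gives $c_g(j)=c_{g'}(j)$ (resp.\ $c_g(k)=c_{g'}(k)$), whence $c_g=c_{g'}$. Therefore $c_g$ is constant on each connected component of the $2$-regular graph $\G-A_i$; these components are the cycles that alternate between colours $j$ and $k$, and I write $c_C\in\{\id,(j\,k)\}$ for the common value of $c$ on such a cycle $C$.

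The heart of the argument is to show that $c_C$ is unchanged along colour-$i$ edges. Suppose $C_1,C_2$ are alternating $jk$-cycles joined by a colour-$i$ edge $e=\{u,v\}$ with $u\in C_1$ and $v=u\tau_i\in C_2$. I would use the closed walk
\[
H:\quad u,\ u\tau_i,\ u\tau_i\tau_j,\ u\tau_i\tau_j\tau_k,\ u\tau_i\tau_j\tau_k\tau_i,\ u\tau_i\tau_j\tau_k\tau_i\tau_j,\ u,
\]
whose successive edges have colours $i,j,k,i,j,k$ and which closes up unconditionally because $(\tau_i\tau_j)(\tau_k\tau_i)(\tau_j\tau_k)=\rho_{i-j}\rho_{k-i}\rho_{j-k}=\rho_0$. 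A short computation with the relations $\rho_a\rho_b=\rho_{a+b}$, $\rho_a\tau_b=\tau_{a+b}$, $\tau_a\rho_b=\tau_{a-b}$, $\tau_a\tau_b=\rho_{a-b}$ shows that the second and third edges of $H$ lie inside $C_2$, the fifth and sixth lie inside $C_1$, and the first and fourth are its two colour-$i$ edges (the fourth leading back from $C_2$ to $C_1$). Applying $\varphi$ and using the previous step, $\varphi(H)$ is a closed walk in $\G$ whose successive edge colours are
\[
i,\ c_{C_2}(j),\ c_{C_2}(k),\ i,\ c_{C_1}(j),\ c_{C_1}(k).
\]
If $c_{C_1}=c_{C_2}$ this is $i,j,k,i,j,k$ or $i,k,j,i,k,j$ and there is nothing to check; but if $c_{C_1}\ne c_{C_2}$ it is $i,j,k,i,k,j$ or $i,k,j,i,j,k$, and tracing a closed walk with either of these colour patterns from a vertex $x$ returns to $x\rho_{2(k-j)}$ or $x\rho_{2(j-k)}$ (using $(\tau_i\tau_j)(\tau_k\tau_i)(\tau_k\tau_j)=\rho_{2(k-j)}$ and $(\tau_i\tau_k)(\tau_j\tau_i)(\tau_j\tau_k)=\rho_{2(j-k)}$). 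Such a walk exists only if $2(j-k)\equiv 0\pmod m$, which is impossible since $j\ne k$ and $|j-k|\ne m/2$. Hence $c_{C_1}=c_{C_2}$.

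Finally, since $\G$ is connected, any two alternating $jk$-cycles are joined by a chain of such cycles and colour-$i$ edges, so $c_C$ is the same for all of them, i.e.\ $c_g$ is a single fixed permutation $c\in\{\id,(j\,k)\}$ for every $g$. If $c=\id$ then $\varphi$ preserves both $A_j$ and $A_k$, and if $c=(j\,k)$ then $\varphi$ swaps them, which is the claim. The step I expect to be the real obstacle is precisely this crux: noticing that the hexagonal closed walk $H$ is available at every colour-$i$ edge with no arithmetic assumptions whatsoever, and that the only ``defect'' it can acquire under $\varphi$ is the colour pattern $i,j,k,i,k,j$ — which is exactly the configuration excluded by the hypothesis $|j-k|\ne m/2$. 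The bookkeeping that places the edges of $H$ inside $C_1$ and $C_2$, and the two short closed-walk computations, are then routine manipulations of the dihedral relations.
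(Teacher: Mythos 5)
Your proof is correct and takes essentially the same route as the paper's: both hinge on the hexagonal closed walks with generator pattern $\tau_i,\tau_j,\tau_k,\tau_i,\tau_j,\tau_k$ (which exist unconditionally), exclude the mixed pattern using $\tau_i\tau_j\tau_k\tau_i\tau_k\tau_j=\rho_{\pm 2(j-k)}\ne\rho_0$ (exactly where $|j-k|\ne m/2$ enters), and then spread the preserve-or-swap alternative over the whole graph by connectivity. Your vertex-local permutations $c_g$, constant along $jk$-cycles and across colour-$i$ edges, are just a more explicit rendering of the paper's terser propagation via overlapping $6$-cycles.
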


\begin{proof}
First, we note that the graph $\G$ contains $6$-cycles
such that its consecutive arcs correspond to the generators
$\tau_i, \tau_j, \tau_k, \tau_i, \tau_j, \tau_k$.
Let $C$ be such a cycle.
As $\tau_i \tau_j \tau_k \tau_i \tau_k \tau_j = \rho_{2j-2k} \ne \rho_0$,
the consecutive arcs of $C^\varphi$
must correspond to the same generators as those of $C$
-- thus, all arcs of $C$ corresponding to $\tau_j$ (respectively $\tau_k$)
map to arcs of $C^\varphi$ all corresponding to $\tau_j$,
or all corresponding to $\tau_k$.
As each of these arcs also lies on another $6$-cycle $C'$ whose consecutive arcs
correspond to the same generators as those of $C$,
it follows that $\varphi$ acts in the same way on the arcs of $C'$.
Since the connection set generates the group $\D_m$,
the graph $\G$ is connected,
therefore this is true for all such $6$-cycles.
We thus conclude that either $\varphi$ fixes the sets $A_j$ and $A_k$,
or swaps them.
\end{proof}

We first deal with the graphs $\Delta_n$ with $n \ge 3$.
Recall that a Cayley graph $\Cay(G, S)$
is called a {\em graphical regular representation} of the group $G$
if its full group of automorphisms is isomorphic to $G$.

\begin{proposition} \label{prop:autdelta}
Let $n \ge 3$.
The full group of automorphisms of the graph $\Delta_n$
is isomorphic to the dihedral group $\D_{3n}$ of order $6n$,
i.e., $\Delta_n$ is a graphical regular representation of $\D_{3n}$.
\end{proposition}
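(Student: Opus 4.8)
The plan is to show that $\aut(\Delta_n)$ acts regularly on the vertex set, which—since $\Delta_n = \Cay(\D_{3n}, S)$ already admits $\D_{3n}$ acting regularly by right multiplication—forces $\aut(\Delta_n) \cong \D_{3n}$. Regularity will follow once we prove that the stabiliser of a vertex is trivial. By vertex-transitivity it suffices to work with the stabiliser $H$ of the identity $\rho_0 \in \D_{3n}$, and to show $H = 1$.

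First I would set up the arc-colour notation: writing $S = \{\tau_0, \tau_k, \tau_n\}$ with $k = 3/\gcd(3,n)$, let $A_0, A_k, A_n$ be the three arc classes (by connection-set element). A quick check confirms the hypotheses of Lemma~\ref{lem:autcay} hold for $(i,j,k,m) = (0,k,n,3n)$: the three values $0, k, n$ are distinct residues mod $3n$ (note $k \in \{1,3\}$ and $n \ge 3$), $\gcd(0,k,n,3n) = \gcd(k,n) = 1$ since $k \mid 3$, and $|k - n| \ne 3n/2$ because $0 < |k-n| < n \le 3n/2$, with equality only if $n = 3$, $k = 3$—but then $|k-n| = 0 \ne 3n/2$. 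Hence any automorphism fixing one arc class either fixes or swaps the other two.

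Next, any $\varphi \in H$ permutes the three arcs out of $\rho_0$, i.e.\ induces a permutation of $\{A_0, A_k, A_n\}$; composing with automorphisms from $\D_{3n}$ if needed, we could try to reduce, but more directly: I would argue that the girth-$6$ signature distinguishes the arcs. For $n \ge 4$ the signature is $(2,3,3)$, so the class with $\epsilon = 2$ is fixed by every automorphism; Lemma~\ref{lem:autcay} then says $\varphi$ fixes or swaps the other two. In the ``fixes all three'' case, $\varphi$ fixes $\rho_0$ and fixes each neighbour $\tau_0, \tau_k, \tau_n$ (the endpoints of the three distinctly-coloured arcs at $\rho_0$), and then a standard connectivity/propagation argument along the coloured $6$-cycles—exactly the mechanism in the proof of Lemma~\ref{lem:autcay}—shows $\varphi$ fixes every vertex, so $\varphi = \id$. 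The remaining case is the ``swap $A_k \lr A_n$'' automorphisms: here I expect to produce the swap explicitly (the map induced on $\D_{3n}$ by an automorphism of the abstract group, or a reflection-type map $g \mapsto g^{-1}$ composed with a translation, sending $\tau_k \leftrightarrow \tau_n$) and then show that in fact \emph{no} graph automorphism realises this swap, by checking it fails to respect the $6$-cycle structure: one computes a word like $\tau_0 \tau_k \tau_n \tau_0 \tau_k \tau_n$ versus its image under the swap and finds the relation $= \rho_0$ is not preserved when $n \ge 4$. For the case $n = 3$ (signature $(4,5,5)$, and $\Delta_3 \cong \Psi_9$), the arc with $\epsilon = 4$ is again canonically fixed, and the same dichotomy plus an explicit word computation finishes it; alternatively one can invoke that $\Delta_3$ is a known small graph whose automorphism group has order $18$.

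The main obstacle is the ``swap'' case: ruling out automorphisms that interchange the two equivalent arc classes $A_k$ and $A_n$. The $6$-cycles through a given arc come in two kinds—face cycles of the toroidal embedding (with arc-colour pattern $\tau_0,\tau_k,\tau_n,\tau_0,\tau_k,\tau_n$) and possibly ``diagonal'' $6$-cycles—and a colour-swap $\varphi$ would send face cycles to closed walks with pattern $\tau_0,\tau_n,\tau_k,\tau_0,\tau_n,\tau_k$; I must verify this pattern does \emph{not} spell the identity in $\D_{3n}$ (using $\tau_a\tau_b = \rho_{a-b}$ one gets $\rho_{(0-n)+(n-k)+(0-n)+(n-k)+\cdots}$, which I would compute to be $\rho_{-2k} \ne \rho_0$ for $n \ge 4$ since $2k \in \{2,6\}$ and $3n \ge 12$), so no face cycle maps to a cycle, contradicting that $\varphi$ is a graph automorphism. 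Handling the borderline small values $n = 3, 4, 5, 6$ carefully—where $k$, $n$, and $3n$ can collide or where extra $6$-cycles appear—is the fiddly part, but each is a finite check.
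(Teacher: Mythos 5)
Your outline follows the paper up to the decisive point, but the step you rely on to kill the ``swap'' case is wrong. First, a bookkeeping slip: for $n\ge 4$ the class fixed by every automorphism is $A_k$ (its arcs lie on two $6$-cycles), so the swap to be excluded is $A_0\leftrightarrow A_n$, not $A_k\leftrightarrow A_n$. More seriously, your word computation is incorrect: using $\tau_a\tau_b=\rho_{a-b}$ and $\rho_a\tau_b=\tau_{a+b}$ one gets
$\tau_0\tau_n\tau_k\tau_0\tau_n\tau_k
=\rho_{-n}\tau_k\tau_0\tau_n\tau_k
=\tau_{k-n}\tau_0\tau_n\tau_k
=\rho_{k-n}\tau_n\tau_k
=\tau_k\tau_k=\rho_0$,
not $\rho_{-2k}$; the same holds for $\tau_n\tau_k\tau_0\tau_n\tau_k\tau_0$. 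This is forced: transposing two of the three involutions in the face relation $(\tau_0\tau_k\tau_n)^2=1$ produces, up to cyclic rotation, the reversed word, which is again a relation. In fact no local length-$6$ word argument can rule out the swap for $n\ge 4$, because the full list of $6$-cycle colour patterns, $(\tau_0\tau_k\tau_n)^2$ and $(\tau_0\tau_n)^3$, is invariant under exchanging $\tau_0$ and $\tau_n$ --- which is precisely why $A_0$ and $A_n$ both have $\epsilon=3$ and cannot be separated by the signature. So the ``main obstacle'' you identify is real, and your proposed mechanism for overcoming it fails.

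What the paper does instead is global. Since $\gcd(3n,n-k)=1$, the arcs in $A_k\cup A_n$ form a Hamiltonian cycle $H$. If $3\nmid n$ (so $k=3$), the arcs in $A_0\cup A_k$ form three $2n$-cycles, so an automorphism fixing $A_k$ cannot swap $A_0$ with $A_n$. If $3\mid n$ (so $k=1$), $A_0\cup A_k$ is a second Hamiltonian cycle $H'$, and one compares cycle lengths: an $A_0$-edge together with the shorter $H$-arc between its endpoints gives a cycle of length $2n+2$, whereas an $A_n$-edge with the shorter $H'$-arc gives a $2n$-cycle, so no automorphism can exchange $H$ and $H'$, hence none swaps $A_0$ and $A_n$. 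Some global input of this kind is what your proposal is missing. (Your treatment of the ``fixes all three classes'' case by propagation is fine, and for $n=3$ a finite check of $\Delta_3\cong\Psi_9$ is acceptable; there a word argument does work, because the extra $6$-cycles with pattern $\tau_0\tau_1\tau_0\tau_1\tau_3\tau_1$ break the symmetry, which is how the paper handles it.)
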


\begin{proof}
As noted above, the graph $\Delta_n = (V, E, \partial)$ contains $6$-cycles
such that their consecutive arcs correspond to the generators
$\tau_0, \tau_k, \tau_n, \tau_0, \tau_k, \tau_n$,
where $k = 3/\gcd(3, n)$
-- each arc lies on two such $6$-cycles.
Furthermore, there are $6$-cycles in $\Delta_n$
whose consecutive arcs correspond to the generators
$\tau_0, \tau_n, \tau_0, \tau_n, \tau_0, \tau_n$.
Since $\gcd(3n, n-k) = 1$,
the arcs corresponding to $\tau_k$ and $\tau_n$
form a Hamiltonian cycle $H$ of $\Delta_n$.
From the definition it follows that the group $\D_{3n}$
acts regularly on the vertices of $\Delta_n$ by right-multiplication,
which induces the natural action of $\D_{3n}$ on the edges of $H$
corresponding to $\tau_k$ (respectively $\tau_n$).
In particular,
the automorphisms from $\D_{3n}$ are precisely those
which fix the sets $A_0$, $A_k$ and $A_n$ defined as in Lemma~\ref{lem:autcay}.
We will show that $\Delta_n$ does not admit any other automorphism.

If $n \ge 4$,
there are no other $6$-cycles in $\Delta_n$ other than the ones described above
-- the arcs corresponding to $\tau_k$ thus lie on two $6$-cycles,
while the arcs corresponding to $\tau_0$ or $\tau_n$ lie on three $6$-cycles.
Every automorphism of $\Delta_n$ thus fixes the set $A_k$,
and by Lemma~\ref{lem:autcay},
either fixes or swaps the sets $A_0$ and $A_n$.
If $n$ is not a multiple of $3$,
then $k = 3$ and the arcs corresponding to $\tau_0$ and $\tau_k = \tau_3$
form three $2n$-cycles.
If, on the other hand, $n$ is a multiple of $3$,
then $k = 1$ and the arcs corresponding to $\tau_0$ and $\tau_k = \tau_1$
form another Hamiltonian cycle $H'$ of $\Delta_n$.
Now, an edge $e$ with $\partial(e) = \{\rho_i, \tau_i\}$
(i.e., corresponding to $\tau_0$)
along with the shortest path on $H$ between $\rho_i$ and $\tau_i$ forms a cycle
$\rho_i \tau_{i+n} \rho_{i+n-1} \tau_{i+2n-1} \dots \rho_{i-n} \tau_i$
of length $2n+2$
since $(n+1) \cdot n - n \cdot 1 \equiv n^2 \equiv 0 \pmod{3n}$.
However, an edge $e'$ with $\partial(e') = \{\rho_i, \tau_{i+n}\}$
(i.e., corresponding to $\tau_n$)
along with the shortest path on $H'$ between $\rho_i$ and $\tau_{i+n}$
forms a $2n$-cycle
$\rho_i \tau_{i+1} \rho_{i+1} \tau_{i+2} \dots \rho_{i+n-1} \tau_{i+n}$.
Therefore,
no automorphism of $\Delta_n$ swaps the Hamiltonian cycles $H$ and $H'$.
In either case it then follows that
no automorphism of $\Delta_n$ swaps the sets $A_0$ and $A_n$.

Consider now the case $n = 3$ -- we then have $k = 1$.
Besides the $6$-cycles described above,
the graph $\Delta_3$ also contains $6$-cycles
whose consecutive arcs correspond to the generators
$\tau_0, \tau_1, \tau_0, \tau_1, \tau_3, \tau_1$.
The arcs corresponding to $\tau_n = \tau_3$ thus lie on four $6$-cycles,
while the arcs corresponding to $\tau_0$ or $\tau_k = \tau_1$
lie on five $6$-cycles.
Every automorphism of $\Delta_3$ thus fixes the set $A_3$,
and by Lemma~\ref{lem:autcay},
either fixes or swaps the sets $A_0$ and $A_1$.
But since sequences of arcs corresponding to the generators
$\tau_1, \tau_0, \tau_1, \tau_0, \tau_3, \tau_0$ do not form $6$-cycles,
it follows that no automorphism of $\Delta_3$ swaps the sets $A_0$ and $A_1$.
We thus conclude that for every $n \ge 3$,
the full automorphism group of $\Delta_n$ is isomorphic to the group $\D_{3n}$,
so $\Delta_n$ is its graphical regular representation.
\end{proof}

Let us now consider the graphs $\Sigma_n$.
The full automorphism group of $\Sigma_3$ has order $216$.
The following lemma deals with the case when $n \ge 4$.

\begin{proposition} \label{prop:autsigma}
Let $n \ge 4$.
The full group of automorphisms of the graph $\Sigma_n$
is isomorphic to the direct product $\D_n \times \D_3$ of order $12n$.
\end{proposition}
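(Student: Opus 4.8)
The plan is to determine $\Aut(\Sigma_n)$ for $n\ge 4$ by first pinning down how any automorphism permutes the three arc-colour classes, then showing that the pointwise stabiliser of these classes is exactly the regular action of $\D_n\times\Z_3$, and finally exhibiting enough ``extra'' automorphisms to build the full group $\D_n\times\D_3$. Recall that $\Sigma_n=\Cay(\D_n\times\Z_3,S)$ with $S=\{\tau_1^0,\tau_0^+,\tau_0^-\}$; write $A$, $A_+$, $A_-$ for the sets of arcs corresponding to the generators $\tau_1^0$, $\tau_0^+$, $\tau_0^-$ respectively. As in the proof of Proposition~\ref{prop:autdelta}, the first step is a girth-cycle census: using the prescribed hexagonal faces $\tau_0^+,\tau_0^+,\tau_1^0,\tau_0^-,\tau_0^-,\tau_1^0$ together with any other closed length-$6$ words in $S$ (for instance $\tau_0^+\tau_0^+\tau_0^+\tau_0^-\tau_0^-\tau_0^-$, which closes since $3$ times a unit in $\Z_3$ is trivial while the $\D_n$-part stays at $\rho_0$), one computes $\epsilon$ for each arc class. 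For $n\ge4$ this gives signature $(2,3,3)$, with the class $A$ (coming from $\tau_1^0$) being the one lying on fewer girth cycles; hence \emph{every} automorphism of $\Sigma_n$ must fix the set $A$ setwise, and then — by the same local-cycle argument as in Lemma~\ref{lem:autcay}, transported to the group $\D_n\times\Z_3$ — either fixes or swaps $A_+$ and $A_-$. (Lemma~\ref{lem:autcay} is stated for $\Cay(\D_m,\cdot)$, so I would either reprove the one-line cycle-tracing argument directly in $\D_n\times\Z_3$, or pass to a suitable quotient; the key identity is that the relevant length-$6$ word with two colours transposed evaluates to a non-identity element.)

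Next I would identify the subgroup $H\le\Aut(\Sigma_n)$ consisting of automorphisms fixing each of $A$, $A_+$, $A_-$ setwise, and show $H\cong\D_n\times\Z_3$ acting regularly. One inclusion is immediate: right multiplication by $\D_n\times\Z_3$ preserves every arc-colour class, so $\D_n\times\Z_3\le H$. For the reverse inclusion, an automorphism $\varphi\in H$ preserving all three colours preserves, in particular, the spanning subgraph formed by the $\tau_0^+$- and $\tau_0^-$-arcs (the ``$\Z_3$-triangles'': for each $g\in\D_n$ the three vertices $g\rho_0,\,g\tau_0^+$-neighbour, etc.\ form a $3$-cycle when $n$ is such that $\tau_0^\pm$ have order making a triangle — more carefully, the $\tau_0^+$-arcs and $\tau_0^-$-arcs together form $2n$ triangles, the cosets of $\{1\}\times\Z_3$), and it preserves the $\tau_1^0$-perfect-matching between them. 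Using connectivity and the fact that $\Sigma_n$ is vertex-transitive, a standard rigidity argument (fix one vertex and chase colours outward, noting each step is forced) shows the stabiliser of a vertex inside $H$ is trivial, so $|H|=|\V(\Sigma_n)|=6n$ and $H=\D_n\times\Z_3$.

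Finally I would produce an automorphism realising the swap $A_+\leftrightarrow A_-$; combined with $H$ this yields a group of order $12n$, and since the swap together with $\D_n\times\Z_3$ generates a group isomorphic to $\D_n\times\D_3$ (the extra involution centralises the $\D_n$-factor and inverts the $\Z_3$-factor, i.e.\ extends $\Z_3$ to $\D_3$), we get $\Aut(\Sigma_n)\supseteq\D_n\times\D_3$; the colour-class analysis of the first step shows there is nothing more, so equality holds. The natural candidate for the swap is the map induced by the group automorphism of $\D_n\times\Z_3$ sending $(\rho,\tau,u)\mapsto(\rho,\tau,-u)$ — i.e.\ identity on $\D_n$, inversion on $\Z_3$ — which fixes $\tau_1^0$ and interchanges $\tau_0^+$ with $\tau_0^-$, hence is a graph automorphism fixing $A$ and swapping $A_\pm$. (Equivalently, one can read this symmetry off the $\SDW(n,3)$ description via $u\mapsto -u$.) The main obstacle I anticipate is the girth-cycle census in the first paragraph: one must enumerate \emph{all} length-$6$ closed words in $S$ to be sure the signature is genuinely $(2,3,3)$ with $A$ strictly distinguished, and must double-check the small cases (e.g.\ whether $n=4,5,6$ introduce sporadic extra hexagons, as happened for $\Delta_3$), since an accidental coincidence of $\epsilon$-values would let an automorphism mix $A$ with $A_\pm$ and wreck the argument; everything after that is routine colour-chasing.
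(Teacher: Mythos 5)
Your strategy is sound and genuinely different from the paper's. The paper does not work with arc-colour classes and a regular subgroup: it observes that every automorphism must preserve the set $C$ of monochromatic hexagons and the matching $E_1^0$, then collapses each hexagon of $C$ to a vertex, obtaining an $n$-cycle $\Lambda$ with tripled edges equipped with a dihedral scheme $\lr$ such that $\Sigma_n\cong\Tr(\Lambda,\lr)$, and identifies $\Aut(\Sigma_n)$ with $\Aut(\Lambda,\lr)\cong\D_n\times\D_3$ directly. Your route instead establishes the same girth-cycle census (signature $(2,3,3)$ with the $\tau_1^0$-edges distinguished, which is correct for $n\ge4$: the alternating sum in the $\D_n$-coordinate of a closed non-backtracking word of length $6$ lies in $[-3,3]$, so only the face hexagons and the $(\tau_0^+)^6$-hexagons occur), shows the colour-preserving subgroup $H$ is semiregular (the three out-arcs at a vertex have distinct colours, so a colour-preserving automorphism fixing a vertex fixes everything by connectivity), hence $H$ equals the regular group $\D_n\times\Z_3$, and then adjoins the graph automorphism induced by the group automorphism inverting the $\Z_3$-factor. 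This gives a cleaner "upper bound $=$ lower bound" count; what it costs you is that the crucial step is the one you only sketch, namely that every automorphism preserves the partition $\{A_+,A_-\}$ globally rather than mixing the two classes. That step does go through: on each face hexagon the cyclic colour pattern $(+,+,0,-,-,0)$ forces a per-hexagon "preserve or swap" decision (the mixed patterns are excluded by backtracking or by the $\Z_3$-sum), on each monochromatic hexagon the decision is automatic, and the decisions agree along shared non-matching edges, which connect everything; but as written this is a plan, not a proof, and it is exactly the content the paper disposes of via the quotient $\Lambda$.

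Two factual slips should be corrected, though neither is load-bearing. First, your sample "extra" closed word $\tau_0^+\tau_0^+\tau_0^+\tau_0^-\tau_0^-\tau_0^-$ is a backtracking closed walk (consecutive mutually inverse letters), not a $6$-cycle; the genuine additional hexagons are the cycles $(\tau_0^+)^6$, and with those the census $(2,3,3)$ is as you claim. Second, the $\tau_0^\pm$-edges do not form $2n$ triangles on the cosets of $\{1\}\times\Z_3$: those cosets are not cliques (the graph has girth $6$, so it is triangle-free), and $\tau_0^+$ has order $6$; the non-matching edges form the $n$ monochromatic hexagons. Your rigidity chase in the second step does not actually use the "triangle" description, so the argument survives, but the statement as given is false and should be removed.
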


\begin{proof}
As noted above, the graph $\Sigma_n = (V, E, \partial)$ contains $6$-cycles
such that their consecutive arcs correspond to the generators
$\tau_0^+, \tau_0^+, \tau_1^0, \tau_0^-, \tau_0^-, \tau_1^0$
-- each arc lies on two such $6$-cycles.
Furthermore, there are $6$-cycles in $\Sigma_n$
whose consecutive arcs all correspond to the generator $\tau_0^+$
(or, equivalently, its inverse $\tau_0^-$).
We denote the set of such $6$-cycles by $C$.
Since $n \ge 4$, there are no other $6$-cycles,
so the arcs corresponding to $\tau_1^0$ lie on two $6$-cycles,
while the arcs corresponding to $\tau_0^+$ or $\tau_0^-$
lie on three $6$-cycles.
Every automorphism of $\Sigma_n$ thus fixes the set $C$,
as well as the set $E_1^0$ of edges corresponding to $\tau_1^0$.

We may thus define a graph $\Lambda = (C, E_1^0, \partial')$
with $\partial'(e) = \{c, c'\} \subset C$
such that $\partial(e) = \{u, v\}$ with $u \in c$, $v \in c'$
(i.e., the endpoints of $e$ in $\Lambda$ are the cycles of $C$
containing the endpoints of $e$ in $\Sigma_n$).
Furthermore,
we define a dihedral scheme $\lr$ on $\Lambda$ by letting $s \lr t$
whenever the tails of $s$ and $t$ in $\Sigma_n$ are adjacent.
Note that the graph $\Lambda$ is a $n$-cycle with tripled edges,
and that $\Sigma_n$ is isomorphic to $\Tr(\Lambda, \lr)$.

We now claim that
$\Aut(\Sigma_n) \cong \Aut(\Lambda, \lr) \cong \D_n \times \D_3$.
Indeed,
the left factor in the direct product
acts naturally on the vertices of $\Lambda$ and therefore on the cycles in $C$,
while the right factor
acts naturally on the sets of parallel edges of $\Lambda$
(while preserving the dihedral scheme $\lr$)
and therefore the sets of edges of $\Sigma_n$
connecting the same two cycles of $C$;
the two actions commute.
This covers every automorphism of $\Sigma_n$
which fixes the sets $C$ and $E^1_0$,
and these automorphisms then form the full automorphism group of $\Sigma_n$.
\end{proof}

Finally, we consider the graphs $\Psi_n$.
The graphs $\Psi_7$ and $\Psi_8$
have full automorphism groups $\PGL(2, 7)$ of order $336$
and $\GL(2,3) \rtimes \Z_2$ of order $96$
(with the non-identity element of $\Z_2$ acting as a composition
of the transposition and the inverse on $\GL(2,3)$),
respectively.
The following lemma deals with the case when $n \ge 9$.

\begin{proposition} \label{prop:autpsi}
Let $n \ge 9$.
The full group of automorphisms of the graph $\Psi_n$
is isomorphic to the dihedral group $\D_n$ of order $2n$,
i.e., $\Psi_n$ is a graphical regular representation of $\D_n$.
\end{proposition}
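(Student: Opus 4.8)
The plan is to follow the same template used in Propositions~\ref{prop:autdelta} and~\ref{prop:autsigma}: first pin down the distribution of $6$-cycles through the various arc classes, use this together with Lemma~\ref{lem:autcay} to show that every automorphism either fixes or permutes the three sets $A_0$, $A_1$, $A_3$ in a restricted way, and then rule out the unwanted permutations by exhibiting length-$6$ generator sequences that do or do not close up. Throughout, the connection set is $\{\tau_0,\tau_1,\tau_3\}$ in $\D_n$, and since $\gcd(0,1,3,n)=1$ and $|1-3|=2\ne n/2$ for $n\ge 9$, Lemma~\ref{lem:autcay} applies verbatim to any automorphism fixing one of the three arc classes.

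First I would enumerate the $6$-cycles of $\Psi_n$ for $n\ge 9$. Besides the face hexagons with arc pattern $\tau_0,\tau_1,\tau_3,\tau_0,\tau_1,\tau_3$ (on which each arc lies twice), one checks which other cyclically-reduced words of length $6$ in $\{\tau_0,\tau_1,\tau_3\}$ evaluate to $\rho_0$: a word alternates $\rho$'s and $\tau$'s so it must alternate between the $\tau_i$'s, and $\tau_a\tau_b\tau_c\tau_d\tau_e\tau_f = \rho_{a-b+c-d+e-f}$, so I need $a-b+c-d+e-f\equiv 0\pmod n$ with consecutive letters distinct. For $n$ large this forces the multiset of exponents and the alternating-sum condition to be quite rigid; I expect exactly the face hexagons plus, for small residues, at most the $\tau_1,\tau_3$-alternating hexagons (word $\tau_1\tau_3\tau_1\tau_3\tau_1\tau_3=\rho_{3(1-3)}=\rho_{-6}$, which closes only when $n\mid 6$, hence never for $n\ge 9$) and similar near-degenerate cases that all disappear once $n\ge 9$. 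The upshot I am aiming for: for $n\ge 9$ the only $6$-cycles are the $n$ face hexagons, so \emph{every} arc lies on exactly two $6$-cycles and the edge-count-of-$6$-cycles argument used before gives no information by itself --- unlike in $\Delta_n$, $\Sigma_n$. This is the main obstacle: I cannot fix any single $A_h$ for free.

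To get around this, I would instead use a secondary invariant, exactly as in the proof of Proposition~\ref{prop:autdelta}: look at the $2$-factors formed by pairs of arc classes. The arcs in $A_1\cup A_3$ form cycles whose lengths are governed by $\gcd(n,1-3)=\gcd(n,2)$ (one Hamiltonian $2n$-cycle if $n$ is odd, two $n$-cycles if $n$ is even); the arcs in $A_0\cup A_1$ form cycles governed by $\gcd(n,0-1)=1$, hence a single Hamiltonian cycle; the arcs in $A_0\cup A_3$ form cycles governed by $\gcd(n,0-3)=\gcd(n,3)$. So the three $2$-factors have mutually different cycle structures as soon as $n$ is coprime to $6$, and even when $\gcd(n,6)>1$ one can separate them: the pair-$\{0,1\}$ factor is always a single Hamiltonian cycle, while the pair-$\{1,3\}$ and pair-$\{0,3\}$ factors split unless $n$ is odd and coprime to $3$; in the latter case one distinguishes them by examining, as in Proposition~\ref{prop:autdelta}, the lengths of the cycles obtained by closing a single $\tau_0$- or $\tau_3$-edge through one of the other two Hamiltonian cycles. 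In every case the conclusion is that an automorphism cannot mix $A_3$ with $\{A_0,A_1\}$, so it fixes $A_3$ setwise; Lemma~\ref{lem:autcay} then says it fixes or swaps $A_0$ and $A_1$.

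Finally I would exclude the swap of $A_0$ and $A_1$. Consider a face hexagon with arc pattern $\tau_0,\tau_1,\tau_3,\tau_0,\tau_1,\tau_3$; a putative automorphism swapping $A_0\leftrightarrow A_1$ and fixing $A_3$ would have to send it to a $6$-cycle with arc pattern $\tau_1,\tau_0,\tau_3,\tau_1,\tau_0,\tau_3$ (up to direction and rotation), and $\tau_1\tau_0\tau_3\tau_1\tau_0\tau_3=\rho_{1-0+3-1+0-3}=\rho_0$ --- so this does close up, and the naive argument fails. Hence I must look one step further, at the local configuration of the two $6$-cycles through a given arc, or equivalently at a $10$-cycle or $8$-cycle witnessing an asymmetry between the $\tau_0$- and $\tau_1$-directions; concretely, I would compare the shortest cycle through a $\tau_0$-edge that uses only $\tau_0$- and $\tau_3$-edges afterwards with the shortest cycle through a $\tau_1$-edge using only $\tau_1$- and $\tau_3$-edges, these having lengths determined by $\gcd(n,3)$ versus $\gcd(n,1-3\cdot?)$-type data and hence differing for $n\ge 9$. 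Once the swap is excluded, every automorphism fixes each of $A_0,A_1,A_3$ setwise, hence (being determined by its action on one Hamiltonian $2$-factor, on which $\D_n$ already acts regularly) lies in the image of the right-regular action of $\D_n$; therefore $\Aut(\Psi_n)\cong\D_n$ and $\Psi_n$ is a GRR of $\D_n$. The delicate point, and the one I would spend the most care on, is the last paragraph: producing the right short-cycle invariant that breaks the $\tau_0\leftrightarrow\tau_1$ symmetry for all $n\ge 9$ simultaneously, since the face hexagons alone are symmetric under this swap.
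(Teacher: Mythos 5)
Your proposal founders on its $6$-cycle enumeration, which is the load-bearing step. The word $\tau_1\tau_0\tau_1\tau_0\tau_1\tau_3=\rho_1\rho_1\rho_{-2}=\rho_0$ closes up for \emph{every} $n$, so besides the $n$ face hexagons (pattern $\tau_0,\tau_1,\tau_3,\tau_0,\tau_1,\tau_3$) the graph $\Psi_n$ contains $n$ further hexagons, each using one $\tau_3$-edge, two $\tau_0$-edges and three $\tau_1$-edges; consequently for $n\ge 10$ the arcs in $A_3$, $A_0$, $A_1$ lie on $3$, $4$, $5$ girth cycles respectively (this is exactly the signature $(3,4,5)$ recorded in Section~\ref{ssec:special}), and for $n=9$ one additionally has $\tau_3\tau_0\tau_3\tau_0\tau_3\tau_0=\rho_9=\rho_0$, giving signature $(4,5,5)$. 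Your conclusion that for $n\ge 9$ the only hexagons are the faces, so that ``every arc lies on exactly two $6$-cycles,'' is therefore false, and the strategy built on it collapses: the retreat to the $2$-factor cycle-structure argument is not only unnecessary but circular as stated, since an arbitrary automorphism permutes the subgraphs spanned by $A_i\cup A_j$ only if it is already known to permute the colour classes $A_0,A_1,A_3$ — which is precisely what has to be proved (Lemma~\ref{lem:autcay} takes the fixing of one class as a hypothesis, it does not provide it). Moreover, the decisive step of your plan, excluding the swap $A_0\leftrightarrow A_1$, is explicitly left open (your ``delicate point''), so even on its own terms the argument is incomplete; and $n=9$ is never treated separately, although there the counts $(4,5,5)$ are not pairwise distinct and a different argument is needed.

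With the correct cycle counts none of this machinery is required, and this is what the paper does: for $n\ge 10$ the three edges at a vertex lie on pairwise distinct numbers of hexagons, so any automorphism fixing a vertex fixes each of its neighbours, whence by connectivity the vertex stabiliser is trivial; since $\D_n$ acts regularly by right multiplication, $\Aut(\Psi_n)\cong\D_n$ and $\Psi_n$ is a GRR of $\D_n$. The case $n=9$ is settled by the isomorphism $\Psi_9\cong\Delta_3$ together with Proposition~\ref{prop:autdelta}. If you want to salvage your write-up, the fix is to redo the enumeration of closed words of length $6$ (your multiset analysis missed the exponent pattern $\{1,1,1,0,0,3\}$), after which the distinct-signature argument makes the $2$-factor and swap-exclusion paragraphs unnecessary.
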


\begin{proof}
Since $\Psi_9 \cong \Delta_3$,
it follows from Proposition~\ref{prop:autdelta}
that the full automorphism group of the graph $\Psi_9$
is isomorphic to $\D_9$.
In the remainder of the proof we will thus assume $n \ge 10$.

Let $G$ be the full automorphism group of $\Psi_n$.
As noted above,
the signature of the graph $\Psi_n$ with $n \ge 10$ is $(3, 4, 5)$
-- since it consists of distinct integers,
it follows that the stabilizer $G_u$ of a vertex $u$ of $\Psi_n$
acts trivially on its neighbours,
and by connectivity it follows that $G_u$ is trivial.
From the definition it follows that the group $\D_n$
acts regularly on the vertices of $\Psi_n$ by right-multiplication,
so the full automorphism group $G$ is isomorphic to $\D_n$.
We can thus conclude that the graph $\Psi_n$
is a graphical regular representation of $\D_n$ for all $n \ge 9$.
\end{proof}

\subsection{Auxiliary results}

We will repeat some results from~\cite{pv19}
that will be used in the following section.

\begin{lemma} \label{lem:eventriineq}
{\rm (\cite[Lemma~3.1]{pv19})}
\pushQED{\qed}
If $(a, b, c)$ is the signature
of a cubic girth-regular graph $\G$ of girth $g$, then:
\begin{enumerate}
\item \label{lem:eventriineq:1} $a+b+c$ is even,
\item \label{lem:eventriineq:2} $a+b \ge c$, and
\item \label{lem:eventriineq:3} if $a \ge 1$ and $c = a+b$,
then $g$ is even.
\qedhere
\end{enumerate}
\popQED
\end{lemma}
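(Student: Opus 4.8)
The plan is to derive all three parts from a single double-counting identity at a vertex. Fix a vertex $v$ of $\G$ with incident edges $e_1, e_2, e_3$, and let $n_v$ denote the number of girth cycles passing through $v$. Any cycle enters and leaves $v$, so each girth cycle through $v$ contains exactly two of $e_1, e_2, e_3$, while conversely every girth cycle containing some $e_i$ passes through $v$. Counting the incidences (girth cycle through $v$, edge $e_i$ lying on it) in two ways gives $a + b + c = \epsilon(e_1) + \epsilon(e_2) + \epsilon(e_3) = 2 n_v$, which is part~(1). For part~(2), every girth cycle through $e_3$ passes through $v$, so $c = \epsilon(e_3) \le n_v$, and hence $a + b = 2 n_v - c \ge 2c - c = c$.

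For part~(3), suppose $a \ge 1$ and $c = a + b$. Then $2 n_v = a + b + c = 2c$, so $n_v = c = \epsilon(e_3)$; since every girth cycle through $e_3$ passes through $v$, the set of girth cycles through $v$ and the set of girth cycles through $e_3$ coincide, i.e.\ every girth cycle through $v$ uses the edge $e_3$. Because $\G$ is girth-regular with signature $(a,b,c)$, this holds at every vertex. Next I would observe that $a \ge 1$ together with $a \le b \le c = a+b$ forces $b < c$ (otherwise $a = 0$), so at each vertex exactly one incident edge attains the maximum value $c$; calling these the \emph{heavy} edges, they form a perfect matching $M$ of $\G$, and every girth cycle uses, at each of its vertices, that vertex's unique heavy edge.

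It then remains to walk around an arbitrary girth cycle $C = (x_0, x_1, \ldots, x_{g-1}, x_0)$ — one exists since $a \ge 1$. At each $x_i$ exactly one of the two edges of $C$ incident to $x_i$ lies in $M$: at least one, because $C$ must traverse the heavy edge at $x_i$; at most one, because $M$ is a matching. Hence membership in $M$ alternates along the $g$ edges of $C$, which is possible only if $g$ is even. This yields part~(3).

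The only delicate point is part~(3), and within it the step that $c = a+b$ together with $a \ge 1$ forces $b < c$: this is what makes the heavy edges a perfect matching rather than merely a $2$-regular subgraph, and it is precisely where the hypothesis $a \ge 1$ enters. That this hypothesis cannot be dropped is shown by the triangular prism, which has girth $3$, signature $(0,1,1)$, and satisfies $c = a+b$. Everything else reduces to the identity $a+b+c = 2n_v$ and is routine.
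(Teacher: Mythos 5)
Your proof is correct. Note that this paper does not actually prove the lemma---it is quoted with a \qed{} from \cite[Lemma~3.1]{pv19}---and your argument (the double-counting identity $a+b+c=2n_v$ at a vertex for parts (1) and (2), and for part (3) the observation that $a\ge 1$ forces $b<c$, so the edges attaining $c$ form a perfect matching which every girth cycle must alternate along) is essentially the standard argument given in that reference, so there is nothing to add.
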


\begin{lemma}
\label{lem:azero}
{\rm (\cite[Lemma~3.2]{pv19})}
If the signature of a cubic girth-regular graph is $(0, b, c)$,
then $b = c = 1$.
\qed
\end{lemma}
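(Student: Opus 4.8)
The plan is to show that the edges of $\G$ lying on no girth cycle form a perfect matching, and that deleting this matching leaves a disjoint union of cycles that contains every girth cycle of $\G$; the conclusion $b=c=1$ then falls out immediately.

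First I would rule out $b=0$. If $b=0$, then by Lemma~\ref{lem:eventriineq}\eqref{lem:eventriineq:2} (applied with $a=0$) we get $c\le b=0$, so the signature is $(0,0,0)$ and no edge of $\G$ lies on a girth cycle. But $\G$ is girth-regular, so its girth is defined and $\G$ contains a cycle of length $g$, every edge of which lies on at least one girth cycle -- a contradiction. Hence $b\ge 1$, and therefore at every vertex exactly one of the three incident edges $e$ has $\epsilon(e)=0$.

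Let $F=\set{e\in\E(\G)}{\epsilon(e)=0}$. By the previous paragraph each vertex of $\G$ is incident to exactly one edge of $F$, so $F$ is a perfect matching and $\G-F$ is $2$-regular, hence a disjoint union of cycles; in particular every edge of $\G$ outside $F$ lies on exactly one cycle of $\G-F$. On the other hand, by the definition of $F$ every girth cycle of $\G$ is edge-disjoint from $F$, so every girth cycle of $\G$ is one of the cycles of $\G-F$.

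Combining these two facts finishes the argument: if $e$ is an edge of $\G$ with $\epsilon(e)\ge 1$, then every girth cycle through $e$ is a cycle of $\G-F$ through $e$, and there is only one such cycle, so $\epsilon(e)\le 1$ and hence $\epsilon(e)=1$. Since at any vertex the two incident edges with positive $\epsilon$ realise the values $b$ and $c$, we get $b=c=1$. The only subtle point is the very first step, where one must invoke that $\G$ genuinely contains a cycle; after that the argument is purely structural and presents no real obstacle.
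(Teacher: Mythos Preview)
Your argument is correct. The key observations --- that the edges with $\epsilon=0$ form a perfect matching, that the complement is $2$-regular, and that every girth cycle must be a full component of this $2$-regular subgraph --- are all sound, and the conclusion follows exactly as you describe.

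Note, however, that the present paper does not actually prove this lemma: it is quoted from~\cite{pv19} with a \texttt{\textbackslash qed} and no argument. So there is no ``paper's own proof'' to compare against here. Your proof is a clean, self-contained justification of the cited result; the only minor remark is that the appeal to Lemma~\ref{lem:eventriineq}\eqref{lem:eventriineq:2} in the first step is not strictly needed --- one can argue directly that $(0,0,0)$ is impossible because some edge lies on a girth cycle --- but invoking it is harmless.
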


\begin{lemma} \label{lem:ab}
{\rm (\cite[Lemma~3.4]{pv19})}
Let $\G$ be a cubic girth-regular graph of girth $g$
with signature $(a, b, c)$.
Let $m = 2^{\lfloor g/2 \rfloor - 1}$.
Then $a \ge c - m$ and $b \le a - c + 2m$.
\qed
\end{lemma}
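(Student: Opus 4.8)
The plan is to reduce both inequalities to a single local estimate: at any vertex $v$, the number of girth cycles passing through a prescribed pair of edges incident to $v$ is at most $m = 2^{\lfloor g/2\rfloor - 1}$.

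First, fix a vertex $v$ with incident edges $e_1,e_2,e_3$ ordered so that $\epsilon(e_1)=a\le\epsilon(e_2)=b\le\epsilon(e_3)=c$. Since every girth cycle through $v$ uses exactly two of these three edges, write $x_{ij}$ for the number of girth cycles through $v$ containing both $e_i$ and $e_j$. As $v$ is an end-vertex of each $e_i$, every girth cycle counted by $\epsilon(e_i)$ passes through $v$, so $\epsilon(e_i)=x_{ij}+x_{ik}$ whenever $\{i,j,k\}=\{1,2,3\}$; thus $a=x_{12}+x_{13}$, $b=x_{12}+x_{23}$, $c=x_{13}+x_{23}$, with all $x_{ij}\ge 0$. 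Consequently $b+c-a=2x_{23}$ and $c-a=x_{23}-x_{12}$. Granting the estimate $x_{23}\le m$, the first identity gives $b\le a-c+2m$, and (since $x_{12}\ge 0$) the second gives $c-a\le x_{23}\le m$, i.e. $a\ge c-m$.

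It remains to prove $x_{ef}\le m$ for any two distinct edges $e,f$ incident to $v$, where $x_{ef}$ counts the girth cycles through $v$ containing both $e$ and $f$. Put $\ell=\lfloor g/2\rfloor$, and to each such cycle $C$ assign $\phi(C)$, the path formed by the first $\ell$ edges of $C$ when it is traversed from $v$ starting along $e$ (this is a path, not merely a walk, because $C$ is a cycle). Each $\phi(C)$ is a non-backtracking path of length $\ell$ issuing from $v$ with first edge $e$, and in a cubic graph there are exactly $2^{\ell-1}=m$ non-backtracking walks of this kind (the first edge is forced, and at each of the next $\ell-1$ vertices there are precisely two non-backtracking continuations); so it suffices to show $\phi$ is injective. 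Suppose $\phi(C)=\phi(C')=:P$ for two distinct such cycles, with $P$ running from $v$ to a vertex $w$. Then $C$ and $C'$ each consist of $P$ followed by a path of length $\lceil g/2\rceil$ from $w$ back to $v$ whose last edge is $f$; since $C\ne C'$ share $P$, these second halves differ, and reversing them yields two distinct paths $Q,Q'$ from $v$ to $w$, both of length $\lceil g/2\rceil$ and both starting with $f$. Let $u$ be the last vertex on the common initial segment of $Q$ and $Q'$; the two continuations from $u$ to $w$ are paths of equal length $r\ge 1$ with distinct first edges, hence their union is connected and is not a tree, so it contains a cycle of length at most $2r\le 2(\lceil g/2\rceil-1)\le g-1$, contradicting that $\G$ has girth $g$.

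The main obstacle is the injectivity of $\phi$: one must rule out that two girth cycles sharing a first half also share the reversed second half, and the parity bookkeeping $2(\lceil g/2\rceil-1)\le g-1$ is precisely what makes the resulting short cycle illegal in both the even and odd cases. Everything else — the decomposition $\epsilon(e_i)=x_{ij}+x_{ik}$ and the concluding arithmetic — is routine.
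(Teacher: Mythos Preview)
Your argument is correct. The decomposition $a=x_{12}+x_{13}$, $b=x_{12}+x_{23}$, $c=x_{13}+x_{23}$ together with the bound $x_{23}\le m$ yields both inequalities exactly as you say, and your injectivity argument for $\phi$ is sound: the key point that $Q$ and $Q'$ share at least the first edge $f$ guarantees $r\le\lceil g/2\rceil-1$, and the parity check $2(\lceil g/2\rceil-1)\le g-1$ works in both the even and odd cases.

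Note, however, that the present paper does not actually prove this lemma: it is quoted verbatim from \cite[Lemma~3.4]{pv19} and closed with a \qedsymbol, so there is no in-paper proof to compare against. Your write-up is a clean, self-contained proof of the cited result; the approach (bounding the number of girth cycles through a fixed $2$-path by counting half-cycle extensions in the cubic tree) is the natural one and almost certainly matches what appears in \cite{pv19}.
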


\begin{theorem} \label{thm:azero}
{\rm (\cite[Theorem~3.6]{pv19})}
If $\G$ is a simple cubic girth-regular graph of girth $g$
with signature $(0, 1, 1)$,
then $\G$ is isomorphic to the truncation of a
$g$-regular graph $\Lambda$ (possibly with parallel edges)
with respect to a dihedral scheme $\lr$.
Moreover, if $\G$ is vertex-transitive,
then the dihedral scheme $\lr$ is arc-transitive.
\qed
\end{theorem}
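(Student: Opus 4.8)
The plan is to read off the $g$-regular graph $\Lambda$ and the dihedral scheme $\lr$ directly from the way the girth cycles of $\G$ are arranged, and then to check that $\G$ is their truncation. Since the signature of $\G$ is $(0,1,1)$, at every vertex $v$ exactly one incident edge, say $e_0(v)$, lies on no girth cycle, while the other two each lie on exactly one girth cycle. A girth cycle through $v$ uses two of the three edges at $v$ and cannot use $e_0(v)$; it therefore uses the other two, and since each of those lies on a unique girth cycle, there is exactly one girth cycle $C_v$ through $v$. Consequently $C_v = C_w$ whenever $v$ and $w$ lie on a common girth cycle, so the girth cycles of $\G$ have pairwise disjoint vertex sets and partition $\V(\G)$; write $\C$ for this set of $g$-cycles. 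The $\epsilon=1$ edges of $\G$ are then precisely those with both ends on a common member of $\C$, and the $\epsilon=0$ edges are precisely those joining two distinct members -- to see the latter, note that an $\epsilon=0$ edge with both ends on a single $C\in\C$, together with the shorter of the two arcs of $C$ between its ends, would bound a cycle of length at most $1+\lfloor g/2\rfloor$, which is strictly less than $g$ (as $g\ge 3$), contradicting the girth.

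Next I would build $\Lambda$ and $\lr$ and identify the truncation. Let $\Lambda$ have vertex set $\C$ and edge set the $\epsilon=0$ edges of $\G$, an $\epsilon=0$ edge being incident in $\Lambda$ to the two members of $\C$ containing its ends; by the previous step $\Lambda$ has no loops, though it may have parallel edges. Each $C\in\C$ is a $g$-cycle whose $g$ vertices emit $g$ distinct $\epsilon=0$ edges, so $C$ has degree $g$ in $\Lambda$, i.e.\ $\Lambda$ is $g$-regular. Define $\lr$ on $\A(\Lambda)$ by declaring two arcs with common tail $C$ to be related exactly when the corresponding two vertices of $C$ are adjacent along the cycle $C$; since $C$ is a $g$-cycle with $g\ge 3$, the graph $(\out_\Lambda(C),\lr)$ is a cycle, so $\lr$ is a dihedral scheme. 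Finally, the map sending a vertex $u$ of $\G$ to the arc of $\Lambda$ that runs along $e_0(u)$ and has tail $C_u$ is a bijection $\V(\G)\to\A(\Lambda)$ (injectivity on a single cycle again uses that no $\epsilon=0$ edge is a chord); under it, two vertices joined in $\G$ by an $\epsilon=1$ edge correspond to $\lr$-related arcs (adjacent positions on the same cycle), and two vertices joined by an $\epsilon=0$ edge correspond to a pair of mutually inverse arcs of $\Lambda$, which are exactly the two adjacency types in $\Tr(\Lambda,\lr)$. Hence $\G\cong\Tr(\Lambda,\lr)$.

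For the last assertion, observe that every automorphism of $\G$ permutes the girth cycles (girth being invariant), hence permutes $\C$ and preserves the values of $\epsilon$, and its restriction to a member of $\C$ is an isomorphism of cycles, so it respects the cyclic orders defining $\lr$; thus $\Aut(\G)$ induces a subgroup of $\Aut(\Lambda,\lr)$, and under the identification $\V(\G)=\A(\Lambda)$ this induced action is the original one. If $\G$ is vertex-transitive, then $\Aut(\G)$ is transitive on $\V(\G)=\A(\Lambda)$, so $\Aut(\Lambda,\lr)$ is transitive on $\A(\Lambda)$, i.e.\ $\lr$ is arc-transitive. The only genuinely delicate point in all of this is the first step -- showing that the girth cycles are pairwise disjoint and that no $\epsilon=0$ edge is a chord of a girth cycle; once the partition $\C$ is in hand, the construction of $\Lambda$, the verification that $\lr$ is a dihedral scheme, and the identification with the truncation are routine bookkeeping about the truncation operation.
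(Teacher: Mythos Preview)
Your argument is correct. The paper itself does not supply a proof of this theorem; it is quoted verbatim from \cite[Theorem~3.6]{pv19} and marked with a \qed, so there is no in-paper proof to compare against. That said, your approach---show that the $(0,1,1)$ signature forces a unique girth cycle through each vertex, hence a partition of $\V(\G)$ into vertex-disjoint $g$-cycles, then take these cycles as the vertices of $\Lambda$ with the $\epsilon=0$ edges as its edges and the cyclic order on each girth cycle as the dihedral scheme---is the natural one and is essentially how the result is established in~\cite{pv19}. The chord argument (an $\epsilon=0$ edge with both ends on the same girth cycle would yield a cycle of length at most $1+\lfloor g/2\rfloor<g$) and the transfer of vertex-transitivity to arc-transitivity of $(\Lambda,\lr)$ via the identification $\V(\G)=\A(\Lambda)$ are both sound.
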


\begin{theorem}
\label{thm:trieq1}
{\rm (\cite[Theorem~3.14]{pv19})}
Let $\G$ be a simple connected cubic girth-regular graph
of girth $g$ with $n$ vertices and signature $(1, 1, 2)$.
Then $g$ is even and $\G$ is the truncation of some map $\M$
with face cycles of length $g/2$.
In particular, $g/2$ divides $n$.
Moreover, if $\G$ is vertex-transitive,
then $\M$ is an arc-transitive map of type $\{g/2, \ell\}$
for some $\ell > g$.
\qed
\end{theorem}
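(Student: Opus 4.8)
The plan is to reconstruct the map $\M$ by reversing the truncation construction, the heart of the matter being a combinatorial claim about how girth cycles meet a distinguished perfect matching. Call an edge of $\G$ \emph{heavy} if it lies on two girth cycles and \emph{light} if it lies on one. Since the signature is $(1,1,2)$, every vertex is incident to exactly one heavy edge (and to no edge lying on zero girth cycles), so the heavy edges form a perfect matching $M$, and $\G\setminus M$ is a $2$-regular graph, a disjoint union of cycles $D_1,\dots,D_t$ all of whose edges are light; as $\G$ is simple and connected, each $D_j$ has length at least $3$. Also, here $a=1\ge 1$ and $c=2=a+b$, so Lemma~\ref{lem:eventriineq} already gives that $g$ is even (this will also re-emerge below).

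The first key step is to show that \emph{every girth cycle of $\G$ alternates between heavy and light edges}. No two consecutive edges of a girth cycle $C$ are both heavy, since they would be two edges of $M$ sharing a vertex. On the other hand, suppose $v_0v_1$ and $v_1v_2$ are consecutive \emph{light} edges of $C$; then the third edge $f$ at $v_1$ is heavy, and $C$ does not contain $f$, so $f$ lies on a girth cycle $C'\ne C$. But $C'$ passes through $v_1$ and hence uses $f$ together with one of $v_0v_1,v_1v_2$; each of these lies on the unique girth cycle $C$, forcing $C'=C$, a contradiction. Hence at each vertex of $C$ exactly one of the two incident $C$-edges is heavy, so $C$ alternates; in particular $g$ is even and each girth cycle has exactly $g/2$ heavy and $g/2$ light edges.

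Now build the base graph $\Lambda$ (possibly with loops and parallel edges): its vertices are the cycles $D_1,\dots,D_t$, and for each $e\in M$ with endpoints in $D_i$ and $D_j$ there is an edge $\bar e$ of $\Lambda$ joining $D_i$ and $D_j$. Equip $\Lambda$ with the dihedral scheme $\lr$ in which two arcs at $D_j$ are related precisely when their heavy edges hang off adjacent vertices of the cycle $D_j$; this is a dihedral scheme, being the cyclic arrangement of the vertices along $D_j$. Because $M$ is a perfect matching one identifies $\A(\Lambda)$ with $\V(\G)$ (each vertex lies on a unique heavy edge), the $\lr$-edges of the truncation with the light edges of $\G$, and the inverse-pair edges with the heavy edges; checking incidences then gives $\G\cong\Tr(\Lambda,\lr)$, with $\Lambda$ connected since $\G$ is. To turn $\Lambda$ into a map, let $\T$ consist of the closed walks obtained by projecting the girth cycles: writing a girth cycle as an alternating sequence $h_1,\ell_1,h_2,\ell_2,\dots$ of heavy and light edges, its projection $\bar h_1,\bar h_2,\dots$ is a closed walk in $\Lambda$ of length $g/2$ (consecutive heavy edges share the slot $D_j$ containing the light edge between them). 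Each heavy edge lies on exactly two girth cycles, and distinct girth cycles through a fixed heavy edge project to distinct walks — one recovers $C$ from $\bar C$ because consecutive heavy edges of a girth cycle attach to \emph{adjacent} vertices of their common $D_j$, determining the intervening light edge — so every edge of $\Lambda$ lies on exactly two walks of $\T$. Finally, ``underlying edges consecutive on some walk of $\T$'' coincides with $\lr$: a light edge of $D_j$ joining the feet of heavy edges $e,e'$ lies on exactly one girth cycle, whose remaining edges at those feet are $e$ and $e'$. Hence $\M:=(\Lambda,\T)$ is a map with all face walks of length $g/2$, its dihedral scheme is $\lr$, and $\G=\Tr(\Lambda,\lr)$ is the truncation of $\M$. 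Counting, $\sum_{C\in\T}|C|=2|E(\Lambda)|=|\A(\Lambda)|=n$, and as each $|C|=g/2$ this shows $g/2\mid n$.

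If $\G$ is vertex-transitive, every automorphism of $\G$ preserves $\epsilon$ and hence fixes $M$, the set $\{D_j\}$, the scheme $\lr$ and the set of girth cycles; it therefore induces an automorphism of $\M$ acting on $\A(\Lambda)=\V(\G)$ in the obvious way, and vertex-transitivity of $\G$ makes $\M$ arc-transitive. An arc-transitive map has a regular skeleton, so all the $D_j$ have a common length $\ell$ and $\M$ has type $\{g/2,\ell\}$. Lastly $\ell>g$: each $D_j$ is a cycle of $\G$, so $\ell\ge g$, and $\ell=g$ cannot occur, since then $D_j$ would be a girth cycle consisting entirely of light edges, contradicting the alternation property. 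I expect the main obstacle to be the alternation claim; verifying afterwards that $(\Lambda,\T)$ is genuinely a map whose truncation is $\G$ is lengthy but routine bookkeeping about truncations and dihedral schemes.
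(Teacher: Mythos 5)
Your proof is correct and follows essentially the same route as the proof this paper relies on (the theorem is only quoted here from \cite{pv19}, not reproved): the edges on two girth cycles form a perfect matching, the remaining edges a $2$-factor whose cycles become the vertices of the quotient graph $\Lambda$ with its dihedral scheme, and the alternation of girth cycles lets them project to the length-$g/2$ face walks, with the counting, divisibility and arc-transitivity conclusions exactly as you state. The one fragile spot is your parenthetical claim that a girth cycle is recoverable from its projection because the intervening light edge is ``determined'': this literally fails when $\Lambda$ has loops or parallel edges (e.g.\ for $g=4$ the M\"obius ladder has signature $(1,1,2)$ and $\Lambda$ is a single vertex with loops, so consecutive heavy edges attach to adjacent feet in two different ways), but the fact you actually need---that the two girth cycles through a fixed heavy edge have distinct projections---still holds, since equal projections would force two distinct girth cycles with the same cyclic sequence of heavy edges, and differing intervening light edges then create a triangle or a quadrilateral, contradicting the girth.
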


\begin{theorem} \label{thm:abc2}
{\rm (\cite[Theorem~3.11]{pv19})}
Let $\G$ be a connected simple cubic girth-regular graph
of girth $g$ with $n$ vertices and signature $(2, 2, 2)$.
Then $g$ divides $3n$ and $\G$ is the skeleton of a map of type $\{g, 3\}$
embedded on a surface with Euler characteristic
$$\chi = n \left(\frac{3}{g} - \frac{1}{2} \right).$$
Moreover, every automorphism of $\G$ extends to an automorphism of the map.
In particular, if $\G$ is vertex-transitive, so is the map.
\qed
\end{theorem}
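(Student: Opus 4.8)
The plan is to take $\T$ to be the set of all girth cycles of $\G$ and to show that the pair $(\G,\T)$ is a map of type $\{g,3\}$ in the sense of Section~\ref{ssec:maps}, after which the numerical claims and the assertion about automorphisms follow quickly. The two conditions imposed on face walks are immediate: since every girth cycle has length $g = \girth(\G)$ it is a simple cycle, so no edge is traversed more than once by any walk in $\T$; and since the signature is $(2,2,2)$, every edge of $\G$ lies on exactly two girth cycles, so every edge belongs to precisely two walks in $\T$. It therefore remains to verify that the relation $\lr$ induced by $\T$ (where $s \lr t$ when the edges underlying $s$ and $t$ are consecutive on some walk in $\T$) is a dihedral scheme.

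This verification is the heart of the argument. First I would fix a vertex $u$ and count the girth cycles through $u$. Because a girth cycle is simple, any girth cycle passing through $u$ uses exactly two of the three edges incident to $u$, hence contributes to exactly two of the three values $\epsilon(e)$ with $e$ incident to $u$. Summing $\epsilon$ over the three edges at $u$ thus double-counts the girth cycles through $u$, giving exactly $(2+2+2)/2 = 3$ of them. Writing $n_{12}, n_{13}, n_{23}$ for the numbers of girth cycles through $u$ using the respective pairs of edges $\{e_1,e_2\}$, $\{e_1,e_3\}$, $\{e_2,e_3\}$, the identities $\epsilon(e_1)=n_{12}+n_{13}=2$, $\epsilon(e_2)=n_{12}+n_{23}=2$ and $\epsilon(e_3)=n_{13}+n_{23}=2$ force $n_{12}=n_{13}=n_{23}=1$. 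Hence each of the three pairs of arcs in $\out_\G(u)$ is $\lr$-related by exactly one face walk, so $(\out_\G(u),\lr)$ is a triangle. Since $\lr$ only relates arcs sharing a tail, its connected components are precisely the sets $\out_\G(u)$, each a $2$-regular cycle; thus $\lr$ is a dihedral scheme and $(\G,\T)$ is a genuine map. As $\G$ is cubic and every face walk has length $g$, the map has type $\{g,3\}$.

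With the map in hand the remaining claims are routine. The graph has $n$ vertices and $|\E(\G)| = 3n/2$ edges, and counting edge--face incidences gives $g\,|\T| = 2\cdot\tfrac{3n}{2} = 3n$, so the number of faces is $3n/g$; integrality forces $g \mid 3n$, and Euler's formula yields $\chi = n - \tfrac{3n}{2} + \tfrac{3n}{g} = n\bigl(\tfrac{3}{g} - \tfrac12\bigr)$. Finally, any automorphism of $\G$ preserves cycle lengths and hence maps girth cycles to girth cycles, so it preserves $\T$ setwise and is by definition an automorphism of the map; in particular a vertex-transitive $\G$ yields a vertex-transitive map. The only real obstacle is the local count in the second paragraph: everything hinges on the fact that a shortest cycle is simple and so meets each vertex in exactly two incident edges, which is precisely what turns the signature $(2,2,2)$ into the combinatorial data of a dihedral scheme.
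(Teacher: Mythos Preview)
The paper does not prove this statement; it is quoted from~\cite{pv19} (as Theorem~3.11 there) and closed with a \qed, so there is no proof in the present paper to compare against. Your argument is correct and self-contained: the local count $n_{12}=n_{13}=n_{23}=1$ is exactly what is needed to verify that $(\out_\G(u),\lr)$ is a $3$-cycle for every vertex $u$, and the Euler-characteristic and automorphism claims then follow immediately. This is the natural proof and almost certainly the one given in~\cite{pv19}.
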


\begin{theorem} \label{thm:cmax}
{\rm (\cite[Theorems~1.2,~1.3,~1.4]{pv19})}
\pushQED{\qed}
Let $\G$ be a simple connected cubic girth-regular graph
with signature $(a, b, c)$.
Then $c \le 2^{\lfloor g/2 \rfloor}$,
with equality implying that $a = b = c$ and $\G$ is one of the following:
\begin{enumerate}[(a)]
\item the complete graph $K_4$ of girth $g = 3$,
\item the complete bipartite graph $K_{3,3}$ of girth $g = 4$,
\item the Petersen graph of girth $g = 5$,
\item the Heawood graph of girth $g = 6$,
\item the Tutte-Coxeter graph of girth $g = 8$, or
\item the Tutte $12$-cage of girth $g = 12$.
\qedhere
\end{enumerate}
\popQED
\end{theorem}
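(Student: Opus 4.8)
The plan is to treat separately the inequality $c \le 2^{\lfloor g/2\rfloor}$ and the structure of the extremal case. Throughout write $g = \girth(\G)$ and $r = \lfloor g/2\rfloor$, and recall that a ball of radius $\rho < g/2$ in $\G$ spans a tree. For the inequality, fix a vertex $v$ and let $e = uv$ be the edge at $v$ with $\epsilon(e) = c$. A girth cycle through $e$ is $e$ together with a $u$–$v$ path $P$ of length $g-1$; reading $P$ from $u$ as $u = p_0, p_1, \dots, p_{g-1} = v$, its first $r$ edges form a non-backtracking path leaving $u$ by a neighbour $\ne v$, and its last $g-1-r = \lceil g/2\rceil - 1$ edges form a path from $p_r$ to $v$. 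Since $g-1-r < g/2$, the ball of that radius about $p_r$ is a tree, so this second part is the \emph{unique} path of length $g-1-r$ from $p_r$ to $v$, hence is determined by $p_r$, and therefore by the initial segment $p_0, \dots, p_r$. Thus distinct girth cycles through $e$ have distinct initial segments, and since there are at most $2 \cdot 2^{r-1} = 2^r$ non-backtracking walks of length $r$ from $u$ whose first step avoids $v$, we get $c = \epsilon(e) \le 2^r$; this may be viewed as a refinement of Lemma~\ref{lem:ab}.

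Now suppose $c = 2^r$ and pick $e = uv$ with $\epsilon(e) = c$. Equality in the count above forces every non-backtracking walk of length $r$ leaving $u$ away from $v$ to be a path that extends to a girth cycle through $e$; in particular each such walk ends at distance exactly $g-1-r$ from $v$, and symmetrically with $u$ and $v$ interchanged. Feeding this back through connectivity and girth-regularity, one shows that the distance partition of $\G$ from an arbitrary vertex attains, level by level, its largest possible size — that is, $\G$ meets the Moore bound for a cubic graph of girth $g$. In particular every edge then lies on exactly $2^r$ girth cycles, so $a = b = c = 2^r$. Finally, a cubic graph meeting the Moore bound of girth $g$ is a Moore graph when $g$ is odd, and when $g$ is even it is bipartite, its two colour classes — with adjacency read as an incidence relation — forming a generalized $\tfrac{g}{2}$-gon of order $(2,2)$: the girth condition together with the extremal counting is exactly what is needed to verify the generalized-polygon axioms.

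It then remains to invoke the relevant classifications. Cubic Moore graphs exist only for $g \in \{3,5\}$, namely $K_4$ and the Petersen graph, the absence of such graphs for odd $g \ge 7$ being the classification of Moore graphs (Hoffman–Singleton, Bannai–Ito, Damerell). A generalized $n$-gon of order $(2,2)$ exists only for $n \in \{2,3,4,6\}$ and is unique in each case — $K_{3,3}$ for $n = 2$, the Heawood graph (the incidence graph of the Fano plane) for $n = 3$, the Tutte–Coxeter graph for $n = 4$, and the Tutte $12$-cage for $n = 6$ — the values $n \notin \{2,3,4,6\}$ (in particular $g \in \{10,16\}$) being excluded by the Feit–Higman theorem together with the non-existence of generalized octagons of order $(2,2)$. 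Assembling these cases yields exactly the list (a)–(f). The heart of the argument is the middle step: turning the local identity $\epsilon(e) = 2^{\lfloor g/2\rfloor}$ into the global statement that $\G$ meets the Moore bound, and in the even case recognising the resulting bipartite graph as a generalized polygon, after which one imports the deep Feit–Higman–type non-existence results rather than reproving them; by comparison, deducing $a = b = c$ and disposing of the small cases $g = 3, 4$ is routine.
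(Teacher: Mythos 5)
First, a remark on context: the paper does not prove Theorem~\ref{thm:cmax} at all — it is imported verbatim from \cite[Theorems~1.2--1.4]{pv19}, which is why it carries a \textit{qed} with no argument. So the comparison is really with the proof in that cited paper, and your outline follows essentially the same strategy: bound $c$ by counting non-backtracking walks of length $\lfloor g/2\rfloor$ from one endpoint of a heavy edge, show the extremal graphs attain the Moore bound, and then invoke the classification of cubic Moore graphs (Hoffman--Singleton et al.) and of generalized $n$-gons of order $(2,2)$ (Feit--Higman plus uniqueness). Your proof of the inequality itself is complete and correct: the tail of a girth cycle beyond its first $\lfloor g/2\rfloor$ edges has length less than $g/2$ and is therefore unique, so girth cycles through $e$ inject into the at most $2^{\lfloor g/2\rfloor}$ admissible initial segments.

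The genuine gap is the middle step, which you yourself identify as the heart of the argument and then dispatch with ``one shows that the distance partition \dots attains its largest possible size.'' Nothing after the first paragraph is actually established until this is proved, since you derive $a=b=c$ \emph{from} the Moore-bound claim rather than independently. For even girth $g=2d$ the claim can in fact be closed along your lines with only the single-edge equality condition: every vertex at distance $d$ from $u$ is forced to lie at distance $d-1$ from $v$ (and symmetrically), one rules out vertices at distance $d+1$, and inclusion--exclusion on $B_{d-1}(u)\cup B_{d-1}(v)$ gives exactly $2(2^d-1)$ vertices. But for odd girth $g=2d+1$ the same manoeuvre fails: the equality condition at $uv$ (even used symmetrically in $u$ and $v$) does not exclude a vertex $w$ with $d(u,w)=d(v,w)=d+1$, and girth-regularity only guarantees \emph{one} edge of multiplicity $c$ at each vertex, with no control over which edge it is, so you cannot simply transport the local picture from $uv$ to the edges near $w$. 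Excluding such vertices — equivalently, proving that the graph really meets the Moore bound and that every edge lies on $2^{\lfloor g/2\rfloor}$ girth cycles — is precisely the technical content of \cite[Theorems~1.3 and~1.4]{pv19}, and it is absent here. The final step (recognising the even-girth extremal graphs as incidence graphs of generalized $(g/2)$-gons of order $(2,2)$ and importing Feit--Higman, and quoting the cubic Moore graph classification) is fine as a use of known results, but as written the proposal proves the bound and only sketches, without the decisive argument, why equality pins down the six listed graphs.
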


\section[Proof of Theorem 1]{Proof of Theorem~\ref{thm:g6}}
\label{sec:g6}

This section contains the proof of Theorem~\ref{thm:g6}.
For cubic vertex-transitive graphs of girth $6$,
Lemmas~\ref{lem:eventriineq} and~\ref{lem:ab}
and Theorems~\ref{thm:azero} and~\ref{thm:cmax}
imply that there are $27$ possible signatures $(a, b, c)$.
In particular, $c \le 8$ must hold.
As we will see, only $9$ of these signatures actually occur.

Let $\G$ be a cubic vertex-transitive graph of girth $6$.
For an arc $uv$ of $\G$,
let $P(uv)$ be the partition
of the set of vertices at distance $2$ from $u$ that are not adjacent to $v$
into two sets according to their common neighbour with $u$
-- i.e., $P(uv)$ contains two sets with two vertices each.
We also define $T(uv)$ as the multiset of two multisets
containing numbers which, for each vertex of $P(uv)$,
tell how many neighbours it has among the vertices of $P(vu)$
(see Figure~\ref{fig:g6}(c) for an example).
We refer to the ordered pair $(T(uv), T(vu))$
as the {\em type} of the arc $uv$.
Clearly, $\sum \bigcup T(uv) = \sum \bigcup T(vu)$
equals the number of $6$-cycles the arc $uv$ lies on.
Vertex-transitivity of $\G$ implies that
for every arc $uv$ with type $(R, S)$,
there is an arc $uw$ with inverse type $(S, R)$.
Since the valency of each vertex is odd,
there must exist an arc with type $(R, R)$ for some $R$.
Such an arc is said to have {\em symmetric type}.
Thus, either all three arcs with tail $u$ have symmetric types,
or one has symmetric type
and the other two have mutually inverse asymmetric types.

Theorems~\ref{thm:azero},~\ref{thm:trieq1} and~\ref{thm:cmax}
already deal with signatures $(0, 1, 1)$, $(1, 1, 2)$ and $(a, b, 8)$,
respectively.
We will now consider each remaining signature $(a, b, c)$
in decreasing order of $c$.
In the following lemmas,
we will start with vertices
$u_0, u_1, v_{00}, v_{01}, v_{10},$ $v_{11},
w_{000}, w_{001}, w_{010}, w_{011}, w_{100}, w_{101}, w_{110}, w_{111}$
and edges as shown in Figure~\ref{fig:g6}(a),
and then add new vertices and edges to complete the graphs
or arrive at a contradiction.
Note that girth $6$ implies that for each arc $w_{hij} w_{h'i'j'}$
($h, h', i, i', j, j' \in \{0, 1\}$),
we must have $h \ne h'$,
and for each $2$-path $w_{hij} w_{h'i'j'} w_{hi''j''}$
($h, h', i, i', i'', j, j', j'' \in \{0, 1\}$),
we must have $i \ne i''$.
In particular,
there is no $6$-cycle containing only vertices $w_{hij}$
($h, i, j \in \{0, 1\}$).

\begin{figure}[t]
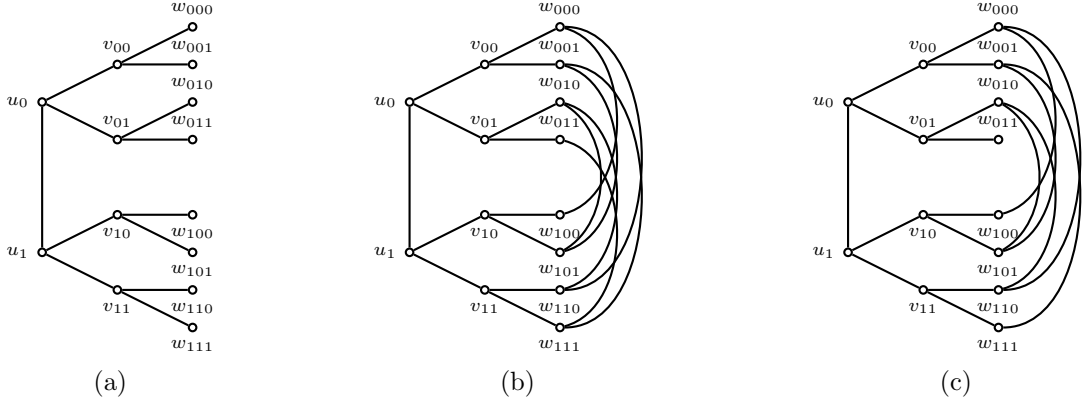

\makebox[\textwidth][c]{
\begin{tabular}{C{0.28}C{0.33}C{0.33}}
\leavevmode%
\beginpgfgraphicnamed{fig-g6}
\input{tikz/g6.tikz}
\endpgfgraphicnamed
&
\leavevmode%
\beginpgfgraphicnamed{fig-g6-c7}
\input{tikz/g6-c7.tikz}
\endpgfgraphicnamed
&
\leavevmode%
\beginpgfgraphicnamed{fig-g6-c6a}
\input{tikz/g6-c6a.tikz}
\endpgfgraphicnamed
\\
(a) & (b) & (c)
\end{tabular}
}
\caption{Constructing a graph of girth $6$.
The general setting is shown in (a).
(b) and (c) show the cases when the arc $u_0 u_1$
lies on seven $6$-cycles,
and six $6$-cycles with $T(u_0 u_1) = \{\{2, 2\}, \{2, 0\}\}$,
respectively.
A contradiction arises in both cases.
}
\label{fig:g6}
\end{figure}

\begin{lemma} \label{lem:swap}
Let $\G$ be a cubic graph of girth $6$
and let $G$ be a group of automorphisms of $\G$
acting transitively on its vertices.
Let $u_0$ be a vertex of $\G$.
Then there is a neighbour $u$ of $u_0$
such that there is an automorphism $\varphi \in G$ swapping $u_0$ and $u$.
If $u_1$ is such a neighbour, then,
assuming the configuration of Figure~\ref{fig:g6}(a),
we also have $v_{ij}^\varphi = v_{i'j'}$ and $w_{hij}^\varphi = w_{h'i'j'}$,
where $h \ne h'$, $i \ne i'$, $j \ne j'$.
\end{lemma}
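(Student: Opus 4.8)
The plan is to produce the desired neighbour and automorphism by a standard transitivity/counting argument, and then to pin down the action on the second and third neighbourhoods purely from the girth-$6$ constraint. First I would argue existence of the swap: fix the vertex $u_0$ and consider the orbits of $G$ on the arcs of $\G$. Since $\G$ is vertex-transitive, the arc set decomposes into orbits, and for an arc $u_0u$ its reverse $uu_0$ lies in some orbit; because each vertex has odd valence $3$, a parity/pigeonhole argument on the three arcs emanating from $u_0$ forces at least one arc $u_0u_1$ whose orbit contains its own reverse. (This is exactly the ``symmetric type'' observation already made just before the lemma, recast in terms of orbits on arcs rather than on types.) An element $\varphi\in G$ carrying $u_0u_1$ to $u_1u_0$ then swaps $u_0$ and $u_1$, giving the first assertion.

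Next I would show that any such $\varphi$ acts on the labelled configuration of Figure~\ref{fig:g6}(a) as claimed. Since $\varphi$ swaps $u_0$ and $u_1$ and is an automorphism, it permutes the common structure around the edge $u_0u_1$. The two neighbours of $u_0$ other than $u_1$ are $v_{00},v_{01}$ (the common neighbours of $u_0$ with the vertices $v_{00},v_{01}$ in the picture), and likewise $v_{10},v_{11}$ are the other neighbours of $u_1$; as $\varphi$ interchanges $u_0$ and $u_1$, it must send $\{v_{00},v_{01}\}$ to $\{v_{10},v_{11}\}$ and vice versa, so $v_{ij}^\varphi=v_{i'j'}$ with $i\ne i'$ after relabelling the $j$-index appropriately. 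For the third neighbourhood, note that $w_{hij}$ is characterised, within the local tree of Figure~\ref{fig:g6}(a), as the vertex adjacent to $v_{ij}$ (other than $u_{\bar h}$) lying along the branch indexed by the last coordinate; since $\varphi$ is an automorphism sending $v_{ij}$ to $v_{i'j'}$ and sending $u_0\mapsto u_1$, $u_1\mapsto u_0$, it carries the two neighbours of $v_{ij}$ outside $\{u_0,u_1\}$ to the two neighbours of $v_{i'j'}$ outside $\{u_0,u_1\}$, i.e.\ $w_{hij}^\varphi\in\{w_{h'i'0},w_{h'i'1}\}$ with $h\ne h'$ and $i\ne i'$; after consistently choosing the labelling of the $j$-coordinate on the $\varphi$-image side, we get $w_{hij}^\varphi=w_{h'i'j'}$ with $j\ne j'$ as well. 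The condition that girth is $6$ guarantees that these fourteen vertices are genuinely distinct (no collapsing), so the relabelling is well-defined.

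The main obstacle is the bookkeeping of the index conventions: the statement encodes the action of $\varphi$ as a ``flip of all three binary coordinates,'' and to make this literally true one must fix, once and for all, a consistent labelling convention for the $j$-indices (and the second/third coordinates) on both sides of the swap. The honest content is only that $\varphi$ swaps the two $u$'s, swaps the corresponding pairs of $v$'s, and swaps the corresponding pairs of $w$'s; the ``$h\ne h',i\ne i',j\ne j'$'' form is then obtained by \emph{defining} the labels on one side in terms of the other. So the proof will consist of (i) the short orbit-parity argument for existence, and (ii) a careful but routine verification that the induced bijection on $\{v_{ij}\}$ and $\{w_{hij}\}$ reverses each binary coordinate under the chosen labelling, using only that $\varphi$ is an adjacency-preserving permutation fixing $\{u_0,u_1\}$ setwise and that girth $6$ keeps the local configuration a tree.
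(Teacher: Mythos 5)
Your proposal is correct and follows essentially the same route as the paper: existence of the swap comes from the same parity/orbit argument (the paper phrases it as counting the $3n$ arcs against at most three arc-orbits, each of which meets $\out_\G(u_0)$, so the odd valence forces a self-paired orbit and hence an automorphism inverting an arc at $u_0$), and the statement about the $v_{ij}$ and $w_{hij}$ is handled exactly as in the paper, namely as a ``without loss of generality'' choice of labelling, using that girth $6$ keeps the radius-two neighbourhood of the edge $u_0u_1$ a tree.
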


\begin{proof}
Suppose that $\G$ has $n$ vertices.
Then it must have $3n$ arcs.
Clearly, the set of arcs of $\G$
is partitioned into at most three orbits under the action of $G$.
Suppose that there is an arc $s_1$ with tail $u_0$
such that $s_1$ and $s_1^{-1}$
(the inverse arc of $s_1$, see Section~\ref{ssec:graphs})
lie in distinct orbits.
By vertex-transitivity,
there is an arc $s_2$ with tail $u_0$
such that $s_2$ and $s_2^{-1}$ lie in the same orbits as $s_1^{-1}$ and $s_1$,
respectively.
Let $u$ be the head of the remaining arc $s_3$ with tail $u_0$.
Clearly, the arc $s_3$ cannot lie in the same orbits as $s_1$ or $s_2$,
so it must lie in its own orbit which then also contains $s_3^{-1}$.
Therefore, there is an automorphism $\varphi$ swapping $u_0$ and $u$.
If $u_1$ is a vertex like $u$, then,
without loss of generality,
$\varphi$ acts on the vertices $v_{ij}$ and $w_{hij}$
from Figure~\ref{fig:g6}(a) as described.
\end{proof}

\begin{lemma} \label{lem:g6c7}
Let $\G$ be a cubic girth-regular graph of girth $6$
with signature $(a, b, c)$.
Then $c \ne 7$.
\end{lemma}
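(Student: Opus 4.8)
The plan is to argue by contradiction, assuming that $c = 7$, i.e., that some arc $u_0u_1$ lies on exactly seven $6$-cycles. I would begin by recording the combinatorial constraints on the type $(T(u_0u_1), T(u_1u_0))$ of such an arc. Each of $T(u_0u_1)$ and $T(u_1u_0)$ is a multiset of two multisets of size two, with entries in $\{0,1,2\}$ (an entry cannot exceed $2$ since $\G$ is simple cubic: a vertex $v_{ij}$ has only two neighbours other than $u_0$, hence at most two among the four vertices of $P(u_1u_0)$), and the total sum $\sum\bigcup T(u_0u_1) = \sum\bigcup T(u_1u_0) = 7$. So $T(u_0u_1)$ is a partition of $7$ into four parts each at most $2$; the only such partition is $2+2+2+1$. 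Thus $T(u_0u_1) = \{\{2,2\},\{2,1\}\}$ or $\{\{2,1\},\{2,2\}\}$ — up to the irrelevant ordering of the two blocks, $T(u_0u_1) = \{\{2,2\},\{2,1\}\}$, and likewise for $T(u_1u_0)$. This already pins down the local picture almost completely.

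Next I would exploit the parity/symmetry observation from the paragraph preceding the lemma: since the valency is odd, at least one arc out of $u_0$ has symmetric type, and the signature being $(a,b,c)=(a,b,7)$ forces (via Lemma~\ref{lem:eventriineq}(1), $a+b+c$ even, so $a+b$ odd, and via Lemma~\ref{lem:ab} with $m = 2^{\lfloor 6/2\rfloor - 1} = 4$, so $a \ge c - m = 3$ and $b \le a - c + 2m = a + 1$) that $(a,b,c)$ is one of a very short list — in fact $a \in \{3,4,5,6,7\}$ with $b \in \{a-1? , \dots\}$ subject to $a \le b \le 7$ and $a+b$ odd; the relevant point is simply that some edge at $u_0$ carries $7$ girth cycles and the local type at such an edge is forced as above. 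I would then draw the forced configuration: starting from Figure~\ref{fig:g6}(a), the fact that $w$-vertices at the same "$w_{hij}$ with $h$ fixed" level cannot be adjacent and the "$i \ne i''$" rule on $2$-paths among $w$-vertices (stated in the excerpt) mean that every $6$-cycle through $u_0u_1$ must leave the $w$-layer, forcing identifications among the $w_{hij}$ or the introduction of new common neighbours. Counting: $T(u_0u_1) = \{\{2,2\},\{2,1\}\}$ says that, writing $P(u_0u_1) = \{v_{00},v_{01}\}\sqcup\{v_{10}? \}$ — more precisely the two vertices sharing a neighbour with $u_0$ — three of them send $2$ edges into $P(u_1u_0)$ and one sends a single edge. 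Each such edge between $P(u_0u_1)$ and $P(u_1u_0)$ closes up a $6$-cycle through $u_0u_1$; the seventh cycle is the one contributing the lone "$1$". I would trace these seven $6$-cycles explicitly in the $w$-layer of Figure~\ref{fig:g6}(a) and show that the required adjacencies among $\{w_{000},\dots,w_{111}\}$, together with girth $6$ forbidding short returns, are mutually inconsistent — concretely, two of the forced edges between $w$-vertices would create a $4$-cycle, or a vertex would be forced to have valency exceeding $3$, exactly as in the picture drawn in Figure~\ref{fig:g6}(b). This contradiction-by-local-saturation is the heart of the argument.

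The main obstacle I anticipate is the bookkeeping: showing that \emph{every} way of realizing the type $\{\{2,2\},\{2,1\}\}$ on both sides — there is essentially one up to symmetry, but one must also reconcile $T(u_0u_1)$ with $T(u_1u_0)$ and with the types of the \emph{other} two arcs at $u_0$ (which are governed by the $(a,b,7)$ signature and the symmetric-type constraint) — leads to a forbidden subconfiguration. The cleanest route is probably to count edges inside the "ball of radius $2$" two ways: the seven $6$-cycles through $u_0u_1$ determine seven edges strictly between distance-$2$ vertices on the two sides, and summing the valencies of the eight $w$-vertices (each is cubic, one edge already used going "up" toward $v$) leaves exactly $16$ arc-ends available in the $w$-layer and below, but the $6$-cycles plus the girth-$6$ no-$4$-cycle condition demand more, or force two of these edges to coincide and thereby create a $4$- or $5$-cycle. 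I would present this as: assume $c=7$, derive the forced type, draw Figure~\ref{fig:g6}(b), observe the unavoidable short cycle (or valency violation), and conclude $c \ne 7$.
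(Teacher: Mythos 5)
Your opening analysis is fine: with $c=7$ the seven $6$-cycles through $u_0u_1$ correspond to seven edges between the layers $\{w_{0ij}\}$ and $\{w_{1ij}\}$, each $w$-vertex carries at most two of them, so the degree pattern is $2+2+2+1$ on each side (your type $\{\{2,2\},\{2,1\}\}$), and together with the no-$4$-cycle conditions this forces the $w$-vertices to induce a path on eight vertices --- exactly the configuration of Figure~\ref{fig:g6}(b). The gap is in what you expect to happen next. Your proposed contradiction mechanism (``two of the forced edges would create a $4$-cycle, or a vertex would be forced to have valency exceeding $3$'', or the arc-end count ``demands more'' than the $16$ available ends) does not materialize: seven edges use only $14$ of the $16$ free arc-ends at the $w$-vertices, and the forced $7$-path configuration satisfies every girth-$6$ constraint --- indeed, even the saturated case with all eight cross edges is locally consistent, since it is precisely the local picture of the Heawood graph (signature $(8,8,8)$, Theorem~\ref{thm:cmax}). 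Figure~\ref{fig:g6}(b) is not a picture of an impossible configuration; it is a picture of a perfectly consistent one, and no amount of local saturation counting around $u_0u_1$ will yield a contradiction.

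The missing idea is to propagate the girth-regularity hypothesis to a second base vertex. From the forced $7$-path one computes that the arc $u_0v_{00}$ also lies on seven $6$-cycles while $u_0v_{01}$ lies on six, so the signature would have to be $(6,7,7)$. Girth-regularity applied at the vertex $v_{01}$ then demands that both arcs $v_{01}w_{010}$ and $v_{01}w_{011}$ lie on seven $6$-cycles (since $v_{01}u_0$ already accounts for the value $6$); but in the forced configuration $v_{01}w_{010}$ lies on only six. That signature-consistency check at a neighbouring vertex, absent from your plan, is the actual heart of the paper's proof. (A minor further point: your appeal to symmetric types and to the parity of $a+b+c$ leans on vertex-transitivity, whereas the lemma is stated for girth-regular graphs; the paper's argument uses only girth-regularity.)
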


\begin{proof}
Assuming the configuration of Figure~\ref{fig:g6}(a),
suppose that the arc $u_0 u_1$ lies on precisely seven $6$-cycles.
Without loss of generality,
we may assume that $w_{011}$ and $w_{100}$
only have one neighbour among $w_{hij}$ $(h, i, j \in \{0, 1\})$.
Clearly, these vertices must induce a $7$-path,
say $w_{011} w_{111} w_{001} w_{101} w_{010} w_{110} w_{000} w_{100}$,
see Figure~\ref{fig:g6}(b).
The arc $u_0 v_{00}$ thus lies on seven $6$-cycles,
and the arc $u_0 v_{01}$ lies on six $6$-cycles,
so the signature of $\G$ is $(6, 7, 7)$.
This implies that the arc $v_{01} w_{010}$ should lie on seven $6$-cycles,
however, it only lies on six $6$-cycles -- contradiction.
\end{proof}

\begin{lemma} \label{lem:g6c6}
Let $\G$ be a connected cubic vertex-transitive graph of girth $6$
with signature $(a, b, c)$, where $c = 6$.
Then $a = b = 6$ and $\G$ is the Möbius-Kantor graph,
which is isomorphic to $\Psi_8$.
\end{lemma}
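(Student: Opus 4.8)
The plan is to work inside the configuration of Figure~\ref{fig:g6}(a) and push the local analysis far enough to force the whole graph. First I would fix a vertex $u_0$ with the neighbour $u_1$ and automorphism $\varphi$ guaranteed by Lemma~\ref{lem:swap}, and analyse the type $(T(u_0u_1),T(u_1u_0))$ of the arc $u_0u_1$. Since $c=6$ and $c\ne 7$ by Lemma~\ref{lem:g6c7} while $c\le 8$, the arc $u_0u_1$ lies on at most six $6$-cycles, and after ruling out $7$ we know each of $w_{011}$ and $w_{100}$ has between one and two neighbours among the $w_{hij}$. The symmetry argument in the last paragraph before Lemma~\ref{lem:swap} says the arc $u_0u_1$ may be taken to have symmetric type, so $T(u_0u_1)=T(u_1u_0)$; the possibilities for a symmetric type with arc-count $6$ are $\{\{2,2\},\{2,0\}\}$, $\{\{2,2\},\{1,1\}\}$, $\{\{2,1\},\{2,1\}\}$ and $\{\{3,1\},\{1,1\}\}$-type shapes, several of which are immediately excluded by the girth-$6$ restrictions on the $w$'s noted after Figure~\ref{fig:g6} (no $6$-cycle lies entirely among the $w_{hij}$, and the $2$-path constraint $i\ne i''$). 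The case $T(u_0u_1)=\{\{2,2\},\{2,0\}\}$ is exactly the one drawn in Figure~\ref{fig:g6}(c), and I expect a short combinatorial contradiction there paralleling Lemma~\ref{lem:g6c7}: tracing which arcs are forced to lie on six versus fewer $6$-cycles produces an arc whose count is inconsistent with the signature.

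This leaves essentially one viable local type, forcing the adjacencies among $w_{000},\dots,w_{111}$ to form a specific $3$-regular bipartite configuration (each $w_{hij}$ having exactly two $w$-neighbours, arranged so that the $16$-vertex ball of radius $3$ around the edge $u_0u_1$ closes up into the bipartite double of a known small graph). The key step is then to observe that the signature must be $(6,6,6)$: once $c=6$ and the arc $u_0u_1$ has symmetric type realizing $6$ cycles, vertex-transitivity propagates the count to every arc. With signature $(6,6,6)$ the graph is girth-edge-regular, and I would appeal to Theorem~\ref{thm:abc2}? --- no, that is for $(2,2,2)$; instead I would count. Each edge lies on $6$ of the $6$-cycles and each $6$-cycle has $6$ edges, so the number of $6$-cycles is exactly $|E(\G)|=3n/2$, giving $n=2k$ with $k=|V|/2$; more usefully, the local picture already built shows that the radius-$3$ ball has $16$ vertices with no coincidences forced yet, so $n\ge 16$, and the closing-up constraints (every further vertex must be some $w$, by the girth and the fact that all arcs have the same type) force $n=16$ exactly.

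Finally I would identify the resulting graph. Having pinned down $n=16$, a cubic, vertex-transitive, girth-$6$ graph with signature $(6,6,6)$ on $16$ vertices, I would simply match it against the unique such graph: the Möbius--Kantor graph $\GP(8,3)$, which by the description in Section~\ref{ssec:special} is $\Psi_8=\Cay(\D_8,\{\tau_0,\tau_1,\tau_3\})$ and has signature $(6,6,6)$. Uniqueness can be argued either by noting that the local configuration determined above admits a unique completion to a connected graph, or by citing the known classification of small cubic vertex-transitive graphs; the cleaner route for the paper is the former, exhibiting the isomorphism explicitly by sending $u_0,u_1$ to $\rho_0,\tau_0$ and reading off the rest from the Cayley structure. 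The main obstacle I anticipate is the case analysis of symmetric arc types achieving exactly $6$ cycles: there are a handful of shapes to enumerate, and for each surviving one the girth-$6$ and type-homogeneity constraints must be chased two steps out (to the $v$'s and the first new layer beyond the $w$'s) before a contradiction or the Möbius--Kantor configuration emerges --- bookkeeping-heavy, but each individual step is elementary.
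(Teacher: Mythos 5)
Your plan follows the right general contour (case analysis on the type of an arc lying on six $6$-cycles, then forcing the $16$-vertex completion), but it has two genuine gaps where the real work of the lemma lies. First, you assume the arc $u_0u_1$ realizing $c=6$ may be taken to have symmetric type. That is not justified: the guaranteed symmetric-type arc at $u_0$ need not be one lying on six $6$-cycles (when $b=c=6$ the two count-$6$ arcs at a vertex could a priori be a mutually inverse asymmetric pair), so the asymmetric possibilities must be dealt with, not assumed away. Concretely, since each vertex of $P(u_0u_1)$ has at most two neighbours in $P(u_1u_0)$, the possible values of $T(u_0u_1)$ are exactly $\{\{2,2\},\{2,0\}\}$, $\{\{2,2\},\{1,1\}\}$ and $\{\{2,1\},\{2,1\}\}$ (your ``$\{3,1\}$-type shapes'' cannot occur), and the first two — which turn out to be asymmetric or otherwise inconsistent — require their own contradiction arguments (in the paper: the $\{\{2,2\},\{2,0\}\}$ case forces $T(u_1u_0)=\{\{2,1\},\{2,1\}\}$ and then $u_0v_{00}$ gets the same asymmetric type as $u_0u_1$, violating the pairing of asymmetric types at a vertex; the $\{\{2,2\},\{1,1\}\}$ case forces $u_0v_{01}$ onto four cycles and then a symmetric-type requirement that cannot be met).

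Second, your step ``vertex-transitivity propagates the count to every arc, so the signature is $(6,6,6)$'' is false as stated: vertex-transitive cubic graphs of girth $6$ with non-constant signatures exist (e.g.\ $\Psi_n$, $n\ge 10$, with signature $(3,4,5)$), and for $c=6$ the candidate signature $(5,5,6)$ is not excluded by any general principle. Ruling it out is a substantive step: in the surviving type $T(u_0u_1)=T(u_1u_0)=\{\{2,1\},\{2,1\}\}$ one must show that if $u_0v_{00}$ lay on exactly five $6$-cycles then (using the swapping automorphism of Lemma~\ref{lem:swap}) both $u_0v_{00}$ and $u_0v_{01}$ would acquire the same asymmetric type, a contradiction; only then does $(6,6,6)$ follow. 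Likewise, the final assertion that ``the closing-up constraints force $n=16$ exactly'' and that the completion is unique is precisely the bookkeeping you defer: one must use the swap automorphism again to pin down $w_{010}\sim x_0$, $w_{011}\sim w_{100}$, the vertex $x_1$, and finally $x_0\sim x_1$, before the labelling by $\D_8$ identifies the graph as $\Psi_8\cong\GP(8,3)$. As written, the proposal identifies the correct target but leaves these three load-bearing arguments unproved.
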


\begin{proof}
Assuming the configuration of Figure~\ref{fig:g6}(a),
suppose that the arc $u_0 u_1$ lies on precisely six $6$-cycles.
First, assume that $T(u_0 u_1) = \{\{2, 2\}, \{2, 0\}\}$.
Without loss of generality,
we may assume that $w_{011}$
has no neighbours among $w_{hij}$ $(h, i, j \in \{0, 1\})$.
The remaining vertices must then induce a $6$-path,
say $w_{111} w_{001} w_{101}$ $w_{010} w_{110} w_{000} w_{100}$,
see Figure~\ref{fig:g6}(c).
Thus, we have $T(u_1 u_0) = \{\{2, 1\}, \{2, 1\}\}$.
The arc $u_0 v_{00}$ has the same asymmetric type as $u_0 u_1$,
contradiction.

Now, assume that $T(u_0 u_1) = \{\{2, 2\}, \{1, 1\}\}$.
Without loss of generality we may assume
$w_{000} \sim w_{100}, w_{110}$, $w_{001} \sim w_{101}, w_{111}$
and $w_{011} \sim w_{100}$,
see Figure~\ref{fig:g6c6}(a).
The vertex $w_{010}$ must then be
adjacent to one of $w_{101}$, $w_{110}$ and $w_{111}$,
giving $T(u_0 v_{00}) = T(u_0 u_1)$.
The arc $u_0 v_{01}$ then lies on precisely four girth cycles,
so the types of the arcs $u_0 u_1$ and $u_0 v_{00}$
must then be both symmetric.
However, in all three cases at least one of $T(u_1 u_0)$ and $T(v_{00} u_0)$
equals $\{\{2, 1\}, \{2, 1\}\}$,
making this case impossible.

\begin{figure}[t]
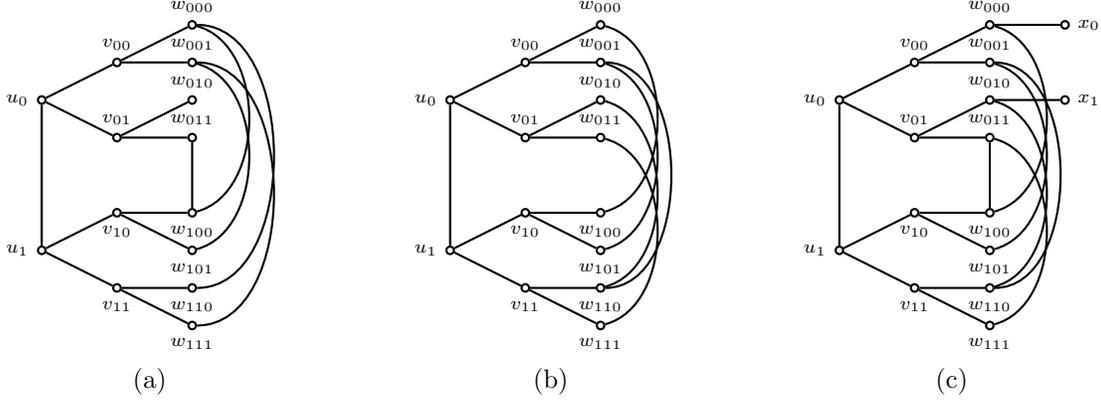

\makebox[\textwidth][c]{
\begin{tabular}{C{0.3}C{0.3}C{0.3}}
\leavevmode%
\beginpgfgraphicnamed{fig-g6-c6b}
\input{tikz/g6-c6b.tikz}
\endpgfgraphicnamed
&
\leavevmode%
\beginpgfgraphicnamed{fig-g6-c6c}
\input{tikz/g6-c6c.tikz}
\endpgfgraphicnamed
&
\leavevmode%
\beginpgfgraphicnamed{fig-g6-c6d}
\input{tikz/g6-c6d.tikz}
\endpgfgraphicnamed
\\
(a) & (b) & (c)
\end{tabular}
}
\caption{Constructing a graph of girth $6$ with $c = 6$.
(a) The case $T(u_0 u_1) = \{\{2, 2\}, \{1, 1\}\}$,
which leads to a contradiction.
(b) The case $T(u_0 u_1) = \{\{2, 1\}, \{2, 1\}\}$.
(c) The subcase of (b) with $a = b = 5$,
which also leads to a contradiction.
}
\label{fig:g6c6}
\end{figure}

The only remaining possibility is
$T(u_0 u_1) = T(u_1 u_0) = \{\{2, 1\}, \{2, 1\}\}$.
Without loss of generality,
we may assume $w_{000} \sim w_{100}$ and $w_{001} \sim w_{101}, w_{110}$.
Since $w_{111}$ cannot have $2$ neighbours among $w_{0ij}$
($i, j \in \{0, 1\}$),
we may further assume $w_{010} \sim w_{110}$ and $w_{011} \sim w_{111}$,
see Figure~\ref{fig:g6c6}(b).
There is another arc $w_{01i} w_{10j}$ for some $i, j \in \{0, 1\}$.
Note that $(i, j) = (0, 1)$ is not possible as that would give a $4$-cycle.

The arcs $u_0 v_{00}$ and $u_0 v_{01}$ lie on at least five $6$-cycles.
Suppose that $u_0 v_{00}$ lies on precisely five $6$-cycles.
Then $u_0 v_{01}$ must also lie on precisely five $6$-cycles,
and there is an automorphism of $\G$ swapping $u_0$ and $u_1$
as in Lemma~\ref{lem:swap}.
Thus, we have $w_{011} \sim w_{100}$.
Let $x_0$ and $x_1$ be the remaining neighbours of $w_{001}$ and $w_{110}$,
respectively (see Figure~\ref{fig:g6c6}(c))
-- since $u_0 v_{00}$ and $u_0 v_{01}$ lie on precisely five $6$-cycles,
they must be distinct vertices.
The arcs $u_0 v_{00}$ and $u_0 v_{01}$ now both have asymmetric type
$(\{\{2, 1\}, \{1, 1\}\}, \{\{2, 1\}, \{2, 0\}\})$, contradiction.

Therefore, $u_0 v_{00}$ lies on six $6$-cycles.
Let $x_0$ be the remaining neighbour of $w_{000}$.
Then one of $w_{010}$ and $w_{011}$ is adjacent to $x_0$,
while the other is adjacent to one of $w_{100}$ and $w_{101}$.
The arc $u_0 v_{01}$ then also lies on $6$ girth cycles,
so the signature of $\G$ is $(6, 6, 6)$.
Since we haven't made any other assumptions about the arc $u_0 u_1$,
we may then assume without loss of generality
that there is an automorphism of $\G$ swapping $u_0$ and $u_1$
as in Lemma~\ref{lem:swap}.
We thus have $w_{010} \sim x_0$, $w_{011} \sim w_{100}$,
and a vertex $x_1$ such that $w_{101}, w_{111} \sim x_1$.
For the arc $v_{00} w_{001}$ to lie on $6$ vertices,
we must also have $x_0 \sim x_1$,
which completes the graph.

Figure~\ref{fig:moebius-kantor}
shows the labelling of vertices of $\G$ with elements of $\D_8$,
establishing that $\G$ is isomorphic to $\Psi_8$,
and a drawing of $\G$ as a generalized Petersen graph $\GP(8, 3)$,
showing that it is also isomorphic to the Möbius-Kantor graph.
\end{proof}

\begin{figure}[t]
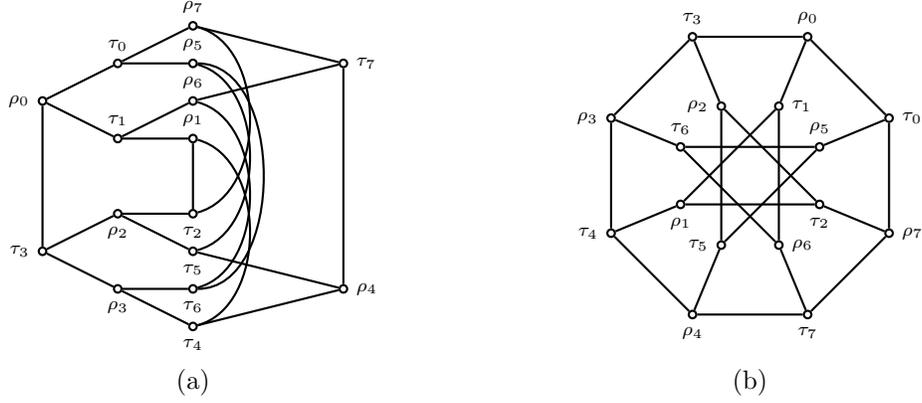

\makebox[\textwidth][c]{
\begin{tabular}{C{0.45}C{0.4}}
\leavevmode%
\beginpgfgraphicnamed{fig-g6-c6e}
\input{tikz/g6-c6e.tikz}
\endpgfgraphicnamed
&
\leavevmode%
\beginpgfgraphicnamed{fig-moebius-kantor}
\input{tikz/moebius-kantor.tikz}
\endpgfgraphicnamed
\\
(a) & (b)
\end{tabular}
}
\caption{The Möbius-Kantor graph of girth $6$ and signature $(6, 6, 6)$,
labelled as the Cayley graph $\Psi_8$,
(a) as the completion of Figure~\ref{fig:g6c6}(b),
and (b) as a generalized Petersen graph $\GP(8, 3)$.}
\label{fig:moebius-kantor}
\end{figure}

\begin{lemma} \label{lem:g6bc5}
Let $\G$ be a connected cubic vertex-transitive graph of girth $6$
with signature $(a, b, c)$, where $b = c = 5$.
Then $a = 4$ and $\G$ is isomorphic to $\Psi_9$.
\end{lemma}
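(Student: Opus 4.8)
\emph{Proof plan.} The plan is to mimic the proof of Lemma~\ref{lem:g6c6}. First I would pin down $a$: by part~(1) of Lemma~\ref{lem:eventriineq} the sum $a+5+5$ is even, so $a$ is even, while Lemma~\ref{lem:ab} (with $g=6$, hence $m=2^{\lfloor 6/2\rfloor-1}=4$) gives $5=b\le a-c+2m=a+3$, i.e.\ $a\ge 2$; since also $a\le c=5$, this forces $a\in\{2,4\}$. In particular $a<5$, so at every vertex exactly two of the three incident arcs lie on five $6$-cycles, and after relabelling we may assume in the configuration of Figure~\ref{fig:g6}(a) that the arc $u_0u_1$ lies on precisely five $6$-cycles.

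Next I would classify $T(u_0u_1)$. Since $\sum\bigcup T(u_0u_1)=5$ and each of its four entries lies in $\{0,1,2\}$, grouping the entries by the two parts of $P(u_0u_1)$ leaves exactly the three possibilities
\[
T(u_0u_1)\in\bigl\{\,\{\{2,2\},\{1,0\}\},\ \{\{2,1\},\{2,0\}\},\ \{\{2,1\},\{1,1\}\}\,\bigr\}.
\]
For each of these I would write down, up to the obvious symmetries, the forced adjacencies among the eight vertices $w_{hij}$, using the girth-$6$ restrictions recalled just before Lemma~\ref{lem:swap} (no edge inside $\{w_{0ij}\}$ or inside $\{w_{1ij}\}$, and the two $\{w_{0ij}\}$-neighbours, resp.\ $\{w_{1ij}\}$-neighbours, of any $w$-vertex carry distinct second indices). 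From this one reads off the types, hence the $6$-cycle counts, of the remaining two arcs $u_0v_{00}$ and $u_0v_{01}$ at $u_0$, and compares them with the signature $(a,5,5)$, $a\in\{2,4\}$. The devices for closing a branch are the same as in Lemma~\ref{lem:g6c6}: (i) the counts at $u_0v_{00}$ and $u_0v_{01}$ must both lie in the multiset $\{a,5,5\}$; (ii) since the valence is odd, at each vertex at least one incident arc has symmetric type and the other two, if asymmetric, have mutually inverse types, so no vertex is incident to two arcs of the same asymmetric type; and (iii) whenever an arc has symmetric type, Lemma~\ref{lem:swap} supplies an automorphism swapping its endpoints, which propagates adjacency information. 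I expect every branch with $a=2$, together with all but one branch with $a=4$, to terminate in a contradiction of one of these three kinds (an arc with the wrong $6$-cycle count, two equal asymmetric types at a vertex, or a cycle shorter than $6$).

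In the unique surviving branch, necessarily of signature $(4,5,5)$, I would complete the graph by adjoining the few remaining vertices and edges forced by requiring the relevant arcs to lie on exactly the prescribed number of $6$-cycles, just as the proof of Lemma~\ref{lem:g6c6} completes $\Psi_8$. Finally I would exhibit a bijection $\V(\G)\to\D_9$ identifying $\G$ with $\Cay(\D_9,\{\tau_0,\tau_1,\tau_3\})=\Psi_9$ (so that also $\G\cong\Delta_3$), conveniently presented in a figure analogous to Figure~\ref{fig:moebius-kantor}(a).

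The hard part will be the combinatorial bookkeeping in the middle step: each of the three candidate types of $u_0u_1$ splits into several subcases according to the exact matching between $\{w_{0ij}\}$ and $\{w_{1ij}\}$, and in every subcase one must keep careful track of which arcs are of symmetric versus asymmetric type and invoke the swap automorphism of Lemma~\ref{lem:swap} at the right moment to keep the number of subcases under control. Once the adjacency pattern of the surviving configuration is fixed, recognising the graph as $\Psi_9$ is routine.
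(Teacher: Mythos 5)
Your overall framework (the configuration of Figure~\ref{fig:g6}(a), arc types, parity of the signature, and a case analysis in the style of Lemma~\ref{lem:g6c6}) matches the paper's, and your reduction to $a\in\{2,4\}$ is fine. But there is a genuine gap: the decisive content of the lemma --- that every branch with $a=2$ leads to a contradiction and that the unique surviving branch with $a=4$ completes to $\Psi_9$ --- is not carried out; you only say you ``expect'' the three candidate types of a $5$-arc to behave this way. In the paper this is exactly where all the work lies: it anchors the analysis at the arc lying on precisely $a$ six-cycles (so the three $2$-paths centred at $u_0$ carry $1,1,4$ girth cycles if $a=2$ and $2,2,3$ if $a=4$), kills $a=2$ by showing that the pairs $w_{00i},w_{01j}$ cannot all have the common neighbours that the $2$-path $v_{00}u_0v_{01}$ would require, and for $a=4$ builds the graph edge by edge (introducing the vertices $x_{ij}$, forcing $w_{001}\sim w_{110}$, then $x_{00}\sim x_{10}$ and $x_{01}\sim x_{11}$ by computing the types of $v_{00}w_{000}$ and $v_{00}w_{001}$) before exhibiting the labelling by $\D_9$. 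Without this analysis the lemma is asserted, not proved.

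A second, more specific flaw is your closing device (iii): Lemma~\ref{lem:swap} only guarantees that \emph{some} neighbour of $u_0$ can be swapped with $u_0$; it does not say that every arc of symmetric type admits such a swap (symmetric type is necessary, not sufficient). The paper obtains a swap along $u_0u_1$ precisely because it takes $u_0u_1$ to lie on $a\neq b,c$ six-cycles: each vertex has exactly one out-arc with that count, so these arcs form a single self-paired orbit and the swap must occur along that edge; this is also why the paper anchors the configuration at the $a$-arc, so that the description of $\varphi$ on the $v_{ij}$ and $w_{hij}$ in Lemma~\ref{lem:swap} applies verbatim. Since you anchor at a $5$-arc, you are not entitled to an automorphism swapping $u_0$ and $u_1$, and the bookkeeping you outline would have to be redone either by applying the swap along the $a$-edge (after relabelling the configuration) or without device (iii) altogether.
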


\begin{proof}
As $a+b+c$ is even, $a$ must also be even.
Assuming the configuration of Figure~\ref{fig:g6}(a),
suppose that the arc $u_0 u_1$ lies on precisely $a$ $6$-cycles.
As $a \ne b, c$,
there exists an automorphism of $\G$ swapping $u_0$ and $u_1$
as in Lemma~\ref{lem:swap}.
By Lemma~\ref{lem:azero}, we must have $a \ge 2$.

Suppose $a = 2$.
Without loss of generality,
we may assume that the $2$-paths
$u_1 u_0 v_{00}$, $u_1 u_0 v_{01}$ and $v_{00} u_0 v_{01}$
lie on one, one and four $6$-cycles, respectively.
By symmetry, we may assume, say,
$w_{000} \sim w_{111}$ and $w_{011} \sim w_{100}$,
see Figure~\ref{fig:g6bc5}(a).
For $v_{00} u_0 v_{01}$ to lie on four $6$-cycles,
the vertices $w_{00i}$ and $w_{01j}$ should have a common neighbour
for all choices of $i, j \in \{0, 1\}$.
This is, however, not attainable.

\begin{figure}[t]
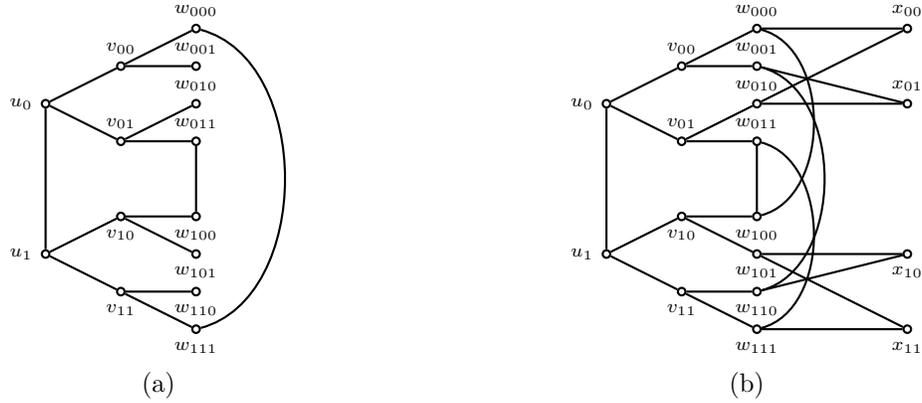

\makebox[\textwidth][c]{
\begin{tabular}{C{0.4}C{0.5}}
\leavevmode%
\beginpgfgraphicnamed{fig-g6-bc5a}
\input{tikz/g6-bc5a.tikz}
\endpgfgraphicnamed
&
\leavevmode%
\beginpgfgraphicnamed{fig-g6-bc5b}
\input{tikz/g6-bc5b.tikz}
\endpgfgraphicnamed
\\
(a) & (b)
\end{tabular}
}
\caption{Constructing a graph of girth $6$ with $b = c = 5$.
(a) The case $a = 2$, which cannot be completed.
(b) The case $a = 4$, which can be completed to $\Psi_9$.}
\label{fig:g6bc5}
\end{figure}

Therefore, we have $a = 4$.
We may thus assume that the $2$-paths
$u_1 u_0 v_{00}$, $u_1 u_0 v_{01}$ and $v_{00} u_0 v_{01}$
lie on two, two and three $6$-cycles, respectively.
Three pairs of vertices $w_{0ij}$ and $w_{0hk}$
for some $i, j, h, k \in \{0, 1\}$ with $i \ne h$
then have a common neighbour
-- this covers six of the remaining arcs
with tail among $w_{0ij}$ ($i, j \in \{0, 1\}$).
As there are eight such remaining arcs,
four of which have the head among $w_{1hk}$ ($h, k \in \{0, 1\}$),
it follows that one or two of the common neighbours
are vertices $w_{1hk}$ for some $h, k \in \{0, 1\}$.
Without loss of generality and by symmetry,
we may then assume that there is a $3$-path $w_{000} w_{100} w_{011} w_{111}$.
Now, $w_{011}$ cannot have a common neighbour with $w_{001}$
(as the common neighbour should be $w_{111}$,
and symmetry would imply that the $2$-path $u_1 u_0 v_{00}$
lied on three $6$-cycles),
so $w_{010}$ has common neighbours with both $w_{000}$ and $w_{001}$,
none of which can be among $w_{1hk}$ ($h, k \in \{0, 1\}$).
We may thus add new vertices $x_{ij}$ ($i, j \in \{0, 1\}$)
with $w_{iii} \sim x_{ii}$, $w_{iij} \sim x_{ij}$
and $w_{iji} \sim x_{ii}, x_{ij}$
for both choices of $\{i, j\} = \{0, 1\}$.
We also have $w_{001} \sim w_{110}$,
see Figure~\ref{fig:g6bc5}(b).

The arc $u_0 u_1$ has symmetric type
with $T(u_0 u_1) = \{\{2, 0\}, \{1, 1\}\}$.
Consider the arc $v_{00} w_{000}$.
We have $P(v_{00} w_{000}) = \{\{u_1, v_{01}\}, \{w_{101}, x_{01}\}\}$
and $P(w_{000} v_{00}) = \{\{v_{10}, w_{011}\}, \{w_{010}, y\}\}$,
where $y$ is the remaining neighbour of $x_{00}$.
The vertex $v_{01}$ is adjacent to $w_{010}$ and $w_{011}$,
while $u_1$ is adjacent to $v_{10}$,
meaning that $T(v_{00} w_{000}) \ne T(u_0 u_1)$.
The arc $v_{00} w_{000}$ thus lies on five $6$-cycles.
As $u_0 v_{00}$ also lies on five $6$-cycles,
$v_{00} w_{001}$ must lie on precisely four $6$-cycles.
We have $P(v_{00} w_{001}) = \{\{u_1, v_{01}\}, \{w_{100}, x_{00}\}\}$
and $P(w_{001} v_{00}) = \{\{v_{11}, x_{10}\}, \{w_{010}, z\}\}$,
where $z$ is the remaining neighbour of $x_{01}$.
As $w_{100}$ is not adjacent to any vertex of $P(w_{001} v_{00})$
and $x_{00} \sim w_{010}$,
we must also have $x_{00} \sim x_{10}$,
and, by symmetry, $x_{01} \sim x_{11}$,
thus completing the graph.
Figure~\ref{fig:g6c5}(a) shows the labelling of vertices of $\G$
with elements of $\D_9$,
establishing that $\G$ is isomorphic to $\Psi_9$.
\end{proof}

\begin{figure}[t]
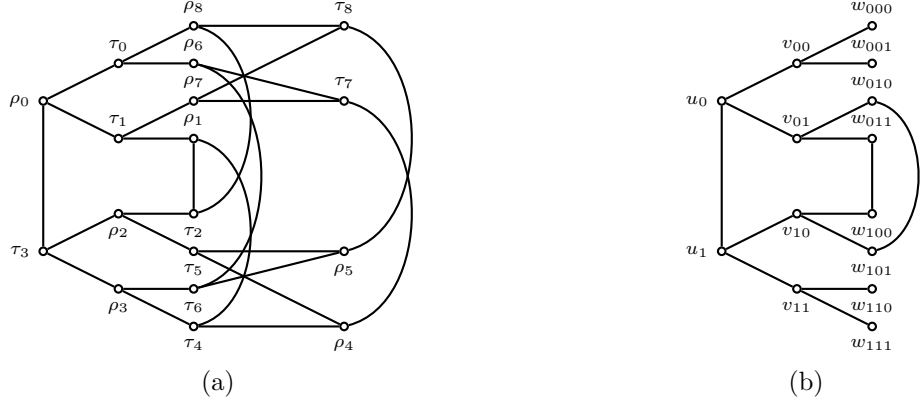

\makebox[\textwidth][c]{
\begin{tabular}{C{0.5}C{0.4}}
\leavevmode%
\beginpgfgraphicnamed{fig-g6-bc5c}
\input{tikz/g6-bc5c.tikz}
\endpgfgraphicnamed
&
\leavevmode%
\beginpgfgraphicnamed{fig-g6-c5a}
\input{tikz/g6-c5a.tikz}
\endpgfgraphicnamed
\\
(a) & (b)
\end{tabular}
}
\caption{Constructing a graph of girth $6$ with $c = 5$.
(a) The graph $\Psi_9$ with signature $(4, 5, 5)$.
(b) The case $a = 2$, $b = 3$, which cannot be completed.
}
\label{fig:g6c5}
\end{figure}

\begin{lemma} \label{lem:g6c5}
Let $\G$ be a connected cubic vertex-transitive graph of girth $6$
with signature $(a, b, c)$, where $b < c = 5$.
Then $a = 3$, $b = 4$
and $\G$ is isomorphic to $\Psi_n$ for some $n \ge 10$.
\end{lemma}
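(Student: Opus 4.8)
The plan is to cut down to three possible signatures, eliminate two of them by a short count inside the ball of radius $3$ around a vertex, and then reconstruct the remaining graphs by hand. By Lemmas~\ref{lem:eventriineq} and~\ref{lem:ab} (the latter with $m=4$), together with $a\le b<c=5$, the only surviving signatures are $(2,3,5)$, $(1,4,5)$ and $(3,4,5)$, and in each of them $a<b<c$ are pairwise distinct. As remarked just before Lemma~\ref{lem:swap}, distinctness forces every out-arc at a vertex to have symmetric type; in particular the three out-arcs at a vertex lie in three distinct $G$-orbits, and each of these orbits must be inverse-closed -- otherwise the inverse orbit would, by vertex-transitivity, also contain an out-arc at that vertex, producing two out-arcs with the same value of $\epsilon$. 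Consequently \emph{every} edge $u_0u_1$ of $\G$ is inverted by some $\varphi\in G$, which (by the conclusion of Lemma~\ref{lem:swap}) may be assumed to act on the configuration of Figure~\ref{fig:g6}(a) as described there. Fix the edge $u_0u_1$ with $\epsilon(u_0u_1)=a$, and let $x$, $y$, $z$ count the girth cycles on the $2$-paths $u_1u_0v_{00}$, $u_1u_0v_{01}$, $v_{00}u_0v_{01}$; since each of the $(a+b+c)/2$ girth cycles through $u_0$ uses exactly two of the three edges at $u_0$, we get $x+y=a$ and $\{x+z,y+z\}=\{b,c\}$, hence $z=(b+c-a)/2$, and after possibly swapping the labels $v_{00}\leftrightarrow v_{01}$ we may assume $x\le y$, which then determines $(x,y)$.

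I would first dispose of the signatures $(2,3,5)$ and $(1,4,5)$, for which $z\ge3$ and $x=0$. Since girth $6$ forbids two vertices from having two common neighbours, each of the (at least three) pairs $\{w_{00i},w_{01j}\}$ that share a neighbour shares exactly one, and such a common neighbour can lie neither in the $w_{1\ast\ast}$-layer (else $x\ge1$) nor anywhere else in Figure~\ref{fig:g6}(a) (else $\G$ would have a cycle shorter than $6$), so it is a genuinely new vertex. Filling the free neighbour slots of $w_{000},w_{001},w_{010},w_{011}$ with these new vertices then leaves one of the pairs $\{w_{000},w_{001}\}$, $\{w_{010},w_{011}\}$ with fewer edges to the $w_{1\ast\ast}$-layer than $x$ (respectively $y$) requires -- a contradiction (the case $(2,3,5)$ is the configuration in Figure~\ref{fig:g6c5}(b)). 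Hence $(a,b,c)=(3,4,5)$, and simultaneously $z=3$ and $(x,y)=(1,2)$.

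The remaining, and by far the most laborious, part is the reconstruction of the graphs of signature $(3,4,5)$. Here $z=3$, so exactly three of the four pairs $\{w_{00i},w_{01j}\}$ have a (unique) common neighbour; since $x=1$, at most one of these common neighbours can lie in the $w_{1\ast\ast}$-layer, and the same slot-count as above rules out the case where all three are new, so exactly one of them -- say the common neighbour of $w_{00i^\ast}$ and $w_{01j^\ast}$ -- equals one of $w_{100},\dots,w_{111}$, which fixes the single edge responsible for $x$. Starting from this local picture, I would build $\G$ outward one ``layer'' at a time, in the style of the proofs of Lemmas~\ref{lem:g6c6} and~\ref{lem:g6bc5}: at each step girth $6$ together with the edge-inverting automorphisms (at $u_0u_1$, and, by vertex-transitivity, at every edge) leave only forced choices, so the construction continues periodically, and connectivity forces it to close up after some number $n$ of periods. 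Labelling the vertices along this periodic pattern by the elements of $\D_n$ then identifies the six consecutive arcs around each face with $\tau_0,\tau_1,\tau_3,\tau_0,\tau_1,\tau_3$, i.e.\ exhibits $\G$ as $\Cay(\D_n,\{\tau_0,\tau_1,\tau_3\})=\Psi_n$; and since $\Psi_7$, $\Psi_8$ and $\Psi_9\cong\Delta_3$ have signatures $(8,8,8)$, $(6,6,6)$ and $(4,5,5)$ respectively (Section~\ref{ssec:special}), the signature $(3,4,5)$ of $\G$ forces $n\ge10$. The chief obstacle is exactly this last step: one must verify carefully that the layer-by-layer construction is forced at every stage and that the only surviving degree of freedom is the period count $n$, so that no graph other than $\Psi_n$ with $n\ge10$ can occur.
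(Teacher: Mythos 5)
Your reduction to the three candidate signatures $(1,4,5)$, $(2,3,5)$, $(3,4,5)$ is correct, and so is the orbit argument: since $a<b<c$, the arcs with a given number of girth cycles form a single inverse-closed orbit, so every edge is reversed by an automorphism (a slight strengthening of Lemma~\ref{lem:swap} which the paper only invokes for the $a$-edge). The bookkeeping $x+y=a$, $z=(b+c-a)/2$ reproduces exactly the distributions the paper uses (zero/one/four, zero/two/three, one/two/three), and your slot-count on the free neighbour positions of $w_{000},\dots,w_{011}$, using that with $x=0$ no cross-pair common neighbour can lie among the $w_{1ij}$, eliminates $(1,4,5)$ and $(2,3,5)$ by essentially the same argument as the paper. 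The observation that for $(3,4,5)$ exactly one of the three cross-pair common neighbours lies in the $w_{1ij}$ layer (at least one by the slot count, at most one since two would force $x\ge 2$) is also correct and matches the paper's starting configuration.

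The gap is that the identification of $\G$ with $\Psi_n$, $n\ge 10$, which is the actual content of the lemma, is only asserted: you state that ``girth $6$ together with the edge-inverting automorphisms leave only forced choices'' and that the construction ``closes up'' to $\Cay(\D_n,\{\tau_0,\tau_1,\tau_3\})$, but this forcing is precisely what has to be proved and it is not automatic. In the paper's execution the extension is \emph{not} forced step by step: at several stages there are two or more admissible continuations and the wrong ones must be killed by ad hoc arguments (for instance the second neighbour of the vertex labelled $\rho_{-1}$ could a priori be $\tau_2$ or $\tau_5$, and the latter needs the separate contradiction of Figure~\ref{fig:g6s345}(a); when closing up, the common neighbour of $\tau_{2-t}$ and $\tau_{-t}$ may be the existing vertex $\rho_{3+t}$, a new vertex that later merges, or a new vertex adjacent to $\tau_{4+t}$, giving the two closings $n=2t+4$ and $n=2t+5$ as well as the option of continuing the strip). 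Moreover the induction only works because one carries along a precise invariant --- which arcs $\rho_i\tau_j$ lie on three, four and five $6$-cycles according to $j-i\in\{3,0,1\}$, and which symmetric types $T(s)$ correspond to each count --- and verifies it is preserved at every extension. None of this is supplied in your proposal; you explicitly defer it as ``the chief obstacle''. As written, the argument proves the signature must be $(3,4,5)$ but gives only a plan, not a proof, for the classification statement $\G\cong\Psi_n$ with $n\ge 10$.
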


\begin{proof}
As $a+b+c$ is even, $a+b$ must be odd.
Assuming the configuration of Figure~\ref{fig:g6}(a),
suppose that the arc $u_0 u_1$ lies on precisely $a$ $6$-cycles.
As $a \ne b, c$,
there exists an automorphism $\varphi$ of $\G$ swapping $u_0$ and $u_1$
as in Lemma~\ref{lem:swap}.
By Lemma~\ref{lem:azero}, we must have $a \ge 1$.

Suppose $a = 1$.
By triangle inequality, we then have $b = 4$.
Without loss of generality,
we may assume that the $2$-paths
$u_1 u_0 v_{00}$, $u_1 u_0 v_{01}$ and $v_{00} u_0 v_{01}$
lie on zero, one and four $6$-cycles, respectively.
Then,
each remaining arc with tail among $w_{0ij}$ ($i, j \in \{0, 1\}$)
must have a common neighbour with another such vertex as its head.
However, the single edge completing a $6$-cycle on $u_1 u_0 v_{01}$
does not reach such a common neighbour, contradiction.

Now suppose $a = 2$.
We must then have $b = 3$.
Without loss of generality,
we may assume that the $2$-paths
$u_1 u_0 v_{00}$, $u_1 u_0 v_{01}$ and $v_{00} u_0 v_{01}$
lie on zero, two and three $6$-cycles, respectively.
By symmetry, we then have $w_{010} \sim w_{101}$ and $w_{011} \sim w_{100}$,
see Figure~\ref{fig:g6c5}(b).
For $v_{00} u_0 v_{01}$ to lie on three $6$-cycles,
the vertices $w_{00i}$ and $w_{01j}$ should have a common neighbour
for three choices of $i, j \in \{0, 1\}$.
This is, however, not attainable.

The only remaining option is $a = 3$, $b = 4$.
Without loss of generality,
we may assume that the $2$-paths
$u_1 u_0 v_{00}$, $u_1 u_0 v_{01}$ and $v_{00} u_0 v_{01}$
lie on one, two and three $6$-cycles, respectively.
Now, the vertices $w_{01h}$ ($h \in \{0, 1\}$)
have two neighbours among $w_{1ij}$ ($i, j \in \{0, 1\}$),
but at most one with $i = 1$.
Without loss of generality we may then assume $w_{011} \sim w_{100}$,
and also that $w_{000}$ has a neighbour among $w_{1ij}$ ($i, j \in \{0, 1\}$).
By symmetry,
$w_{111}$ then has a neighbour among $w_{0ij}$ ($i, j \in \{0, 1\}$).
Since only one of $v_{10}$ and $v_{11}$
has a common neighbour with only one of $w_{000}$ and $w_{001}$,
and the arc $u_0 v_{00}$ lies on four $6$-cycles,
the latter must have symmetric type
with $T(u_0 v_{00}) = \{\{2, 1\}, \{1, 0\}\}$.

In the remainder of this proof,
we will gradually build an isomorphism
between $\G$ and $\Psi_n$ for some $n \ge 10$.
We will thus assume that the graph $\Gamma$ has $2n$ vertices
which the automorphism maps to the elements of the group $\D_n$.
We start by relabelling the vertices
$u_0$, $u_1$, $v_{00}$, $v_{01}$, $v_{10}$, $v_{11}$, $w_{000}$, $w_{001}$,
$w_{010}$, $w_{011}$, $w_{100}$, $w_{101}$, $w_{110}$ and $w_{111}$
as $\rho_0$, $\tau_3$, $\tau_0$, $\tau_1$, $\rho_2$, $\rho_3$, $\rho_{-1}$,
$\rho_{-3}$, $\rho_{-2}$, $\rho_1$, $\tau_2$, $\tau_5$, $\tau_6$ and $\tau_4$,
respectively.
Note that each arc determined so far
is of form $\rho_i \tau_j$ or $\tau_j \rho_i$,
where $j-i \in \{0, 1, 3\}$.
Also, the automorphism $\varphi$ acts as
$\rho_i^\varphi = \tau_{3-i}$ and $\tau_i^\varphi = \rho_{3-i}$
on the vertices determined so far.
These properties will continue to hold
as we will be determining more arcs and vertices.

By the above argument, we may add vertices
$\tau_{-1}$, $\tau_{-3}$, $\tau_{-2}$, $\rho_5$, $\rho_6$ and $\rho_4$
such that $\rho_{-1} \sim \tau_{-1}$,
$\tau_{-2} \sim \rho_{-3} \sim \tau_{-3}$,
$\rho_5 \sim \tau_6 \sim \rho_6$ and $\tau_4 \sim \rho_4$.
Since none of $\tau_i$ ($i \in \{-1, -2, -3\}$)
can have two neighbours among $\rho_j$ ($j \in \{-2, 1, 2, 3\}$),
it follows that the remaining neighbour of $\rho_{-1}$
is either $\tau_2$ or $\tau_5$.
However, $\rho_{-1} \sim \tau_5$ implies $\rho_{-2} \sim \tau_4$ by symmetry
(see Figure~\ref{fig:g6s345}(a)),
and the vertices $\rho_j$ ($j \in \{-2, 1\}$)
cannot have three neighbours among $\tau_i$ ($i \in \{-1, -2, -3, 5\}$),
contradiction.

\begin{figure}[t]
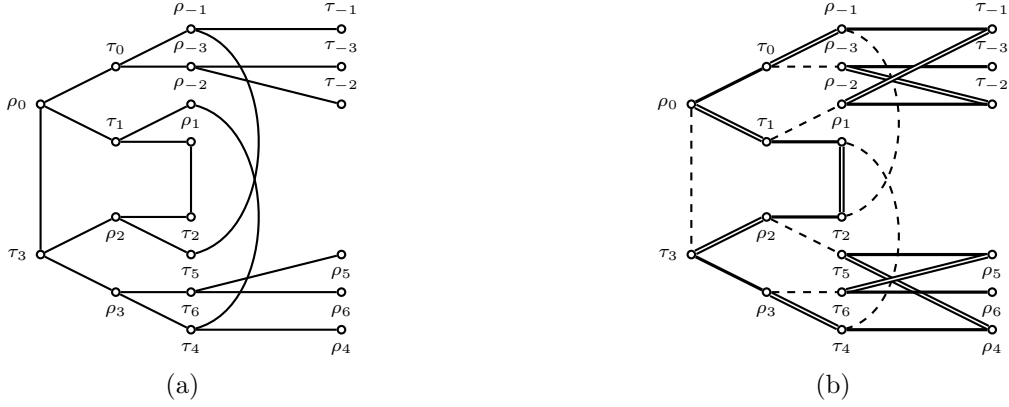

\makebox[\textwidth][c]{
\begin{tabular}{C{0.5}C{0.5}}
\leavevmode%
\beginpgfgraphicnamed{fig-g6-c5b}
\input{tikz/g6-c5b.tikz}
\endpgfgraphicnamed
&
\leavevmode%
\beginpgfgraphicnamed{fig-g6-c5c}
\input{tikz/g6-c5c.tikz}
\endpgfgraphicnamed
\\
(a) & (b)
\end{tabular}
}
\caption{Constructing a graph of girth $6$ with signature $(3, 4, 5)$,
with two choices of the remaining neighbour of $\rho_{-1}$.
In (a), it is assumed that $\rho_{-1} \sim \tau_5$,
which leads to a contradiction.
In (b), it is assumed that $\rho_{-1} \sim \tau_2$.
The dashed, thick and double edges
lie on three, four and five $6$-cycles, respectively.}
\label{fig:g6s345}
\end{figure}

Therefore, we have $\rho_{-1} \sim \tau_2$,
and by symmetry also $\rho_1 \sim \tau_4$.
Without loss of generality,
we now have $\tau_{-1} \sim \rho_{-2} \sim \tau_{-2}$,
and by symmetry also $\rho_4 \sim \tau_5 \sim \rho_5$,
see Figure~\ref{fig:g6s345}(b).
Examining the arcs $\rho_0 \tau_3$, $\rho_0 \tau_0$ and $\rho_0 \tau_1$,
it follows that an arc $s$ lying on three, four or five $6$-cycles
has $T(s)$ equal to $\{\{2, 0\}, \{1, 0\}\}$, $\{\{2, 1\}, \{1, 0\}\}$
and $\{\{2, 1\}, \{1, 1\}\}$, respectively.
As $\tau_0 \rho_0$ lies on four $6$-cycles
and both $\tau_1$ and $\tau_3$ have a common neighbour with $\tau_2$,
it follows that $\tau_0 \rho_{-1}$ lies on five $6$-cycles.
The edge $\tau_0 \rho_{-3}$ then lies on three $6$-cycles;
by symmetry,
$\rho_3 \tau_4$ and $\rho_3 \tau_6$ must lie on five and three $6$-cycles, respectively.
As $\tau_1 \rho_0$ lies on five $6$-cycles
and both $\tau_0$ and $\tau_3$ have a common neighbour with $\tau_2$,
it follows that $\tau_1 \rho_1$ lies on four $6$-cycles.
The edge $\tau_1 \rho_{-2}$ then lies on three $6$-cycles;
by symmetry,
$\rho_2 \tau_2$ and $\rho_2 \tau_5$ must lie on four and three $6$-cycles, respectively.
It follows that $\rho_{-1} \tau_2$ and $\rho_1 \tau_4$ lie on three $6$-cycles;
furthermore, $\rho_1 \tau_2$ must lie on five $6$-cycles,
while $\rho_{-1} \tau_{-1}$ and $\tau_4 \rho_4$ lie on four $6$-cycles each.
Continuing the examination,
we obtain that the paths $\tau_{-1} \rho_{-2} \tau_{-2} \rho_{-3} \tau_{-3}$
and $\rho_4 \tau_5 \rho_5 \tau_6 \rho_6$ both consist of arcs
alternatingly lying on five and four $6$-cycles each.

Now we have arrived at a point
where we have determined $4t+8$ vertices of the graph for some $t \ge 3$,
of which $\tau_{-t}$ and $\rho_{3+t}$
are missing two arcs lying on three and five $6$-cycles,
$\tau_{1-t}$, $\tau_{2-t}$, $\rho_{1+t}$ and $\rho_{2+t}$
are missing an arc lying on three $6$-cycles,
and the vertices $\rho_i$ and $\tau_{3+i}$ ($-t \le i \le t$)
have their neighbourhoods completely determined,
and the arcs $\rho_i \tau_j$ and $\tau_j \rho_i$ determined so far
lie on three, four or five $6$-cycles
precisely when $j-i$ equals $3$, $0$ and $1$, respectively.

For the arc $\tau_{3-t} \rho_{2-t}$
to have the desired type $\{\{2, 1\}, \{1, 1\}\}$,
the vertices $\tau_{-t}$ and $\tau_{2-t}$ must have a common neighbour.
By symmetry,
the vertices $\rho_{3+t}$ and $\rho_{1+t}$ must also have a common neighbour.
Of the vertices determined so far,
only $\rho_{3+t}$ is a candidate
for the common neighbour of $\tau_{-t}$ and $\tau_{2-t}$.
Suppose that this is the case
-- by symmetry, we must also have $\tau_{-t} \sim \rho_{1+t}$.
As the arc $\tau_{-t} \rho_{3+t}$ must lie on five $6$-cycles
and each of the vertices $\tau_{3-t}$, $\tau_{1+t}$ and $\tau_{2+t}$
has a common neighbour with precisely one of $\tau_{2-t}$ and $\tau_{3+t}$,
it follows that $\tau_{1-t} \sim \rho_{2+t}$,
which completes the graph, see for example Figure~\ref{fig:psieven}(a).
If the indices are taken modulo $n = 2t+4$,
it can be seen that the graph $\G$ is isomorphic to $\Psi_n$.

\begin{figure}[t]
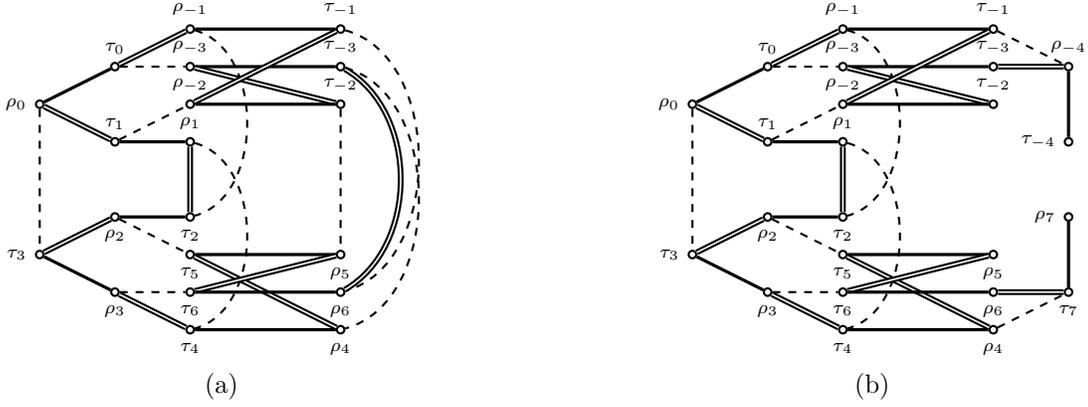

\makebox[\textwidth][c]{
\begin{tabular}{C{0.5}C{0.5}}
\leavevmode%
\beginpgfgraphicnamed{fig-g6-c5d}
\input{tikz/g6-c5d.tikz}
\endpgfgraphicnamed
&
\leavevmode%
\beginpgfgraphicnamed{fig-g6-c5e}
\input{tikz/g6-c5e.tikz}
\endpgfgraphicnamed
\\
(a) & (b)
\end{tabular}
}
\caption{Constructing a graph of girth $6$ with signature $(3, 4, 5)$,
with two choices of the common neighbour of $\tau_{-1}$ and $\tau_{-3}$.
In (a), it is assumed that the common neighbour is $\rho_6$,
which then gives the graph $\Psi_{10}$.
In (b), it is assumed that the common neighbour is a new vertex $\rho_{-4}$,
which is adjacent to another new vertex $\tau_{-4}$,
thus giving a situation similar to Figure~\ref{fig:g6s345}(b).
The dashed, thick and double edges
lie on three, four and five $6$-cycles, respectively.}
\label{fig:psieven}
\end{figure}

Now assume the contrary,
i.e., that the common neighbour of $\tau_{2-t}$ and $\tau_{-t}$
is not $\rho_{3+t}$.
Then it must be a new vertex,
which we name $\rho_{-1-t}$.
By symmetry, the common neighbour of $\rho_{1+t}$ and $\rho_{3+t}$
must be another new vertex -- call it $\tau_{4+t}$.
The arcs $\tau_{2-t} \rho_{-1-t}$ and $\rho_{1+t} \tau_{4+t}$
must lie on three $6$-cycles each,
and the arcs $\tau_{-t} \rho_{-1-t}$ and $\rho_{3+t} \tau_{4+t}$
must then lie on five $6$-cycles each.
Therefore,
the remaining arcs with $\rho_{-1-t}$ and $\tau_{4+t}$ as tails
must lie on four $6$-cycles each.
If the remaining neighbours of $\rho_{-1-t}$ and $\tau_{4+t}$
are new vertices $\tau_{-1-t}$ and $\rho_{4+t}$
(see for example Figure~\ref{fig:psieven}(b)),
then we are back at the previous case.

If, on the other hand,
$\rho_{-1-t}$ and $\tau_{4+t}$ are adjacent to known vertices,
we must have $\rho_{-1-t} \sim \tau_{4+t}$,
since all other vertices missing an arc
already lie on an arc lying on four $6$-cycles.
As the arc $\tau_{2-t} \rho_{-1-t}$ lies on precisely three $6$-cycles
and the vertex $\rho_{1+t}$ is not adjacent
to any of $\tau_{i-t}$ ($i \in \{1, 3, 4, 5\}$),
we must have $\rho_{3+t} \sim \tau_{1-t}$,
and by symmetry also $\tau_{-t} \sim \rho_{2+t}$,
again completing the graph
(see for example Figure~\ref{fig:psiodd}(a)).
If the indices are taken modulo $n = 2t+5$,
it can be seen that the graph $\G$ is isomorphic to $\Psi_n$.
\end{proof}

\begin{figure}[t]
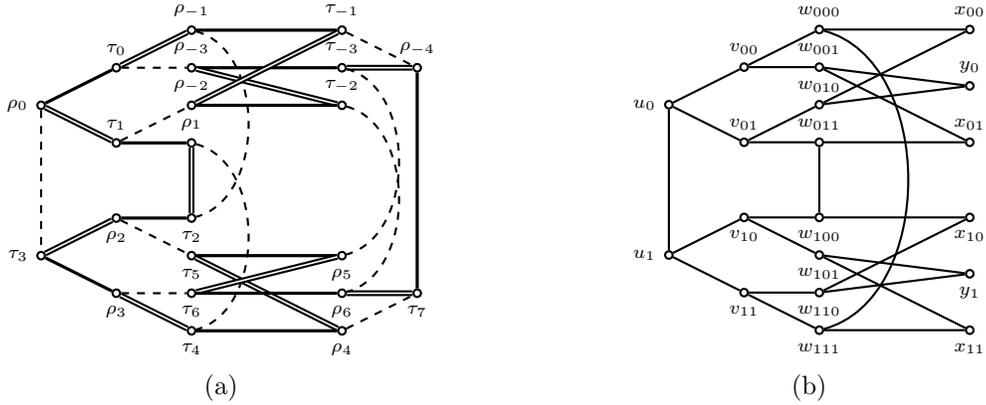

\makebox[\textwidth][c]{
\begin{tabular}{C{0.5}C{0.4}}
\leavevmode%
\beginpgfgraphicnamed{fig-g6-c5f}
\input{tikz/g6-c5f.tikz}
\endpgfgraphicnamed
&
\leavevmode%
\beginpgfgraphicnamed{fig-g6-c4a}
\input{tikz/g6-c4a.tikz}
\endpgfgraphicnamed
\\
(a) & (b)
\end{tabular}
}
\caption{(a) Completing Figure~\ref{fig:psieven}(b)
by identifying $\rho_{-4} = \rho_7$ and $\tau_{-4} = \tau_7$
to obtain $\Psi_{11}$.
The dashed, thick and double edges
lie on three, four and five $6$-cycles, respectively.
(b) Constructing a graph of girth $6$ with signature $(2, 4, 4)$,
which leads to a contradiction.}
\label{fig:psiodd}
\end{figure}

\begin{lemma} \label{lem:g6c4}
Let $\G$ be a connected cubic vertex-transitive graph of girth $6$
with signature $(a, b, c)$, where $c = 4$.
Then $a = b = 4$ and either $\G$ is the Pappus graph,
which is isomorphic to $\Sigma_3$,
or $\G$ is the Desargues graph.
\end{lemma}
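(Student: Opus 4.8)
The plan is to follow the pattern of Lemmas~\ref{lem:g6c7}--\ref{lem:g6c5}: fix the local configuration of Figure~\ref{fig:g6}(a) around an edge $u_0u_1$, use vertex-transitivity through Lemma~\ref{lem:swap}, and propagate the constraint that every arc lies on the prescribed number of $6$-cycles outward, bookkeeping arc types as in the paragraph preceding Lemma~\ref{lem:swap}, until the graph closes up or a contradiction appears. First I would pin down the admissible signatures: parts (1) and (2) of Lemma~\ref{lem:eventriineq} give that $a+b$ is even and $a+b\ge 4$; since $c=4\ne 1$, Lemma~\ref{lem:azero} forces $a\ge 1$; and $a\le b\le c=4$. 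Hence $(a,b)\in\{(1,3),(2,2),(2,4),(3,3),(4,4)\}$. I would also record that two arcs at a vertex with mutually inverse types carry the same value of $\epsilon$, so that for every signature other than $(4,4,4)$ an arc realizing a value that occurs only once must have symmetric type (and for $(1,3,4)$, whose three entries are distinct, all three arcs at a vertex have symmetric type); this cuts down the possibilities for $T(u_0u_1)$ in each case.

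Next I would dispose of the signatures $(1,3,4)$, $(2,2,4)$, $(3,3,4)$ and $(2,4,4)$. In each case, choose $u_1$ so that $u_0u_1$ realizes a convenient $\epsilon$-value (invoking the swap $\varphi$ of Lemma~\ref{lem:swap} when that value is the unique one), enumerate the finitely many possibilities for $T(u_0u_1)$ compatible with girth $6$ (no $4$- or $5$-cycle among the vertices $w_{hij}$), draw the forced edges among the $w_{hij}$, and then compute the type of a neighbouring arc such as $u_0v_{00}$, $u_0v_{01}$ or $v_{00}w_{00j}$. As in Lemmas~\ref{lem:g6c7}--\ref{lem:g6c5}, each branch produces either a forbidden short cycle or an arc whose number of girth cycles contradicts the assumed signature; the case $(2,4,4)$ is the one drawn in Figure~\ref{fig:psiodd}(b). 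I expect $(2,2,4)$ and $(3,3,4)$ to need the most subcases here, but each subcase should collapse quickly once the $w$-adjacencies are filled in.

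Finally comes the case $(4,4,4)$, which is the real content. Here every arc lies on exactly four $6$-cycles. Starting from Figure~\ref{fig:g6}(a) with the swap $\varphi$, I would list the symmetric types $T(u_0u_1)$ with $\sum\bigcup T(u_0u_1)=4$, fill in the edges among the $w_{hij}$ and the next layer of vertices, and use the requirement that $\epsilon=4$ at every newly exposed arc, together with $\varphi$, to force almost all adjacencies. At an explicit bifurcation — a choice between closing up using an already-constructed vertex and introducing a new one, much like the fork near the end of the proof of Lemma~\ref{lem:g6c5} — the construction splits, and the two completions are the Pappus graph on $18$ vertices and the Desargues graph on $20$ vertices. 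I would finish by exhibiting the labellings identifying the first with $\Sigma_3=\Cay(\D_3\times\Z_3,\{\tau_1^0,\tau_0^+,\tau_0^-\})$ from Section~\ref{ssec:special} and the second with the generalised Petersen graph $\GP(10,3)$; in both, girth $6$ and vertex-transitivity are evident from the description.

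The main obstacle is precisely this $(4,4,4)$ analysis: it is the only signature with $c=4$ that is actually realized, it yields two graphs rather than one, and one must locate the bifurcation correctly and carry each branch through to a complete graph — this is the point in the whole argument where the sporadic Desargues graph, the single exception in Theorem~\ref{thm:g6}, enters the classification. By contrast, eliminating $(1,3,4)$, $(2,2,4)$, $(3,3,4)$ and $(2,4,4)$ is comparatively routine, being the same kind of finite local bookkeeping already carried out for the larger values of $c$.
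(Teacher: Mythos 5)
Your proposal is correct and follows essentially the same route as the paper: the same local analysis around an arc lying on four $6$-cycles in the configuration of Figure~\ref{fig:g6}(a), using Lemma~\ref{lem:swap}, the signature constraints from Lemmas~\ref{lem:eventriineq} and~\ref{lem:azero}, and the arc-type bookkeeping, with the two surviving completions being the Pappus and Desargues graphs. The only differences are organizational: the paper eliminates only $(2,4,4)$ up front and folds $(1,3,4)$, $(2,2,4)$ and $(3,3,4)$ into one analysis of the symmetric types of an $\epsilon=4$ arc (the $a=b=3$ possibility is ruled out only deep inside the construction, and the Pappus/Desargues fork is the choice of which already-built vertex $x_{00}$ attaches to, rather than an old-versus-new vertex choice), so your separate preliminary eliminations would redistribute, not reduce, that work.
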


\begin{proof}
First assume that $a < b = 4$.
By Lemma~\ref{lem:azero}, we must then have $a = 2$.
Assuming the configuration of Figure~\ref{fig:g6}(a),
suppose that the arc $u_0 u_1$ lies on precisely two $6$-cycles.
As $a \ne b, c$,
there exists an automorphism of $\G$ swapping $u_0$ and $u_1$
as in Lemma~\ref{lem:swap}.
The $2$-path $v_{00} u_0 v_{01}$ then lies on three $6$-cycles,
so the vertices $w_{00i}$ and $w_{01j}$ have a common neighbour
for three choices of $i, j \in \{0, 1\}$.
Symmetry implies that none of these common neighbours
is a vertex $w_{1h\ell}$ ($h, \ell \in \{0, 1\}$).
Instead we may assume without loss of generality
that $w_{000} \sim w_{111}$ and $w_{011} \sim w_{100}$.
The vertices $w_{000}$ and $w_{011}$ cannot have a common neighbour,
as that would imply that there are
two common neighbours of $w_{001}$ and $w_{010}$,
giving us a quadrangle.
By symmetry, $w_{100}$ and $w_{111}$ also have no common neighbour.
Therefore, we have new vertices
$x_{00}$, $y_0$, $x_{01}$, $x_{10}$, $y_1$ and $x_{11}$
such that $w_{000} x_{00} w_{010} y_0 w_{001} x_{01} w_{011}$
and $w_{111} x_{11} w_{101} y_1 w_{110} x_{10} w_{100}$
are $6$-paths in $\G$,
see Figure~\ref{fig:psiodd}(b).
However, the arcs $u_0 v_{00}$ and $u_0 v_{01}$
now both have the same asymmetric type
$(\{\{2, 1\}, \{1, 0\}\}, \{\{1, 1\}, \{1, 1\}\})$, contradiction.

Therefore, we have either $b < 4$ or $a = b = 4$.
In either case,
there exists an arc $u_0 u_1$ lying on four $6$-cycles,
where we again assume the configuration of Figure~\ref{fig:g6}(a),
and an automorphism $\varphi$ of $\G$ swapping $u_0$ and $u_1$
as in Lemma~\ref{lem:swap}.
Clearly, $u_0 u_1$ must have symmetric type.
It cannot have type $T(u_0 u_1) = \{\{2, 2\}, \{0, 0\}\}$
or $T(u_0 u_1) = \{\{2, 0\}, \{2, 0\}\}$,
as this would imply the existence of a quadrangle.
Suppose that $T(u_0 u_1) = \{\{2, 1\}, \{1, 0\}\}$.
Without loss of generality,
we may then assume $w_{000} \sim w_{100}$, $w_{011} \sim w_{111}$
(see Figure~\ref{fig:g6c4t2}(a)),
and $w_{010} \sim w_{10i}$, $w_{011} \sim w_{10j}$
for some choice of $\{i, j\} = \{0, 1\}$.
Now, $v_{10}$ has precisely one neighbour
among the vertices of $P(v_{00} u_0)$,
$v_{11}$ has none,
and $w_{010}$ and $w_{011}$ together have one or two.
As the arc $u_0 v_{01}$ lies on four $6$-cycles,
it follows that $u_0 v_{00}$ must then lie on precisely two $6$-cycles.
This is however not possible by the previous argument.

\begin{figure}[t]
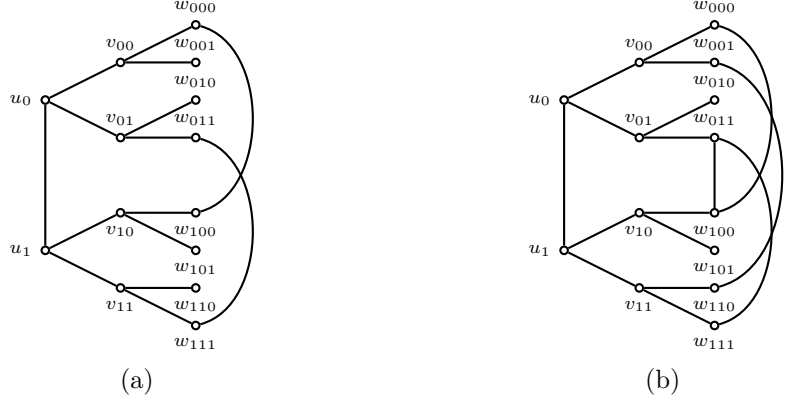

\makebox[\textwidth][c]{
\begin{tabular}{C{0.4}C{0.4}}
\leavevmode%
\beginpgfgraphicnamed{fig-g6-c4b}
\input{tikz/g6-c4b.tikz}
\endpgfgraphicnamed
&
\leavevmode%
\beginpgfgraphicnamed{fig-g6-c4c}
\input{tikz/g6-c4c.tikz}
\endpgfgraphicnamed
\\
(a) & (b)
\end{tabular}
}
\caption{Constructing a graph of girth $6$ with $c = 4$
with two choices for $T(u_0 u_1)$.
In (a), it is assumed that $T(u_0 u_1) = \{\{2, 1\}, \{1, 0\}\}$,
while in (b), it is assumed that $T(u_0 u_1) = \{\{2, 0\}, \{1, 1\}\}$.
Both assumptions lead to a contradiction.}
\label{fig:g6c4t2}
\end{figure}

Suppose now that $T(u_0 u_1) = \{\{2, 0\}, \{1, 1\}\}$.
Without loss of generality we may now assume
$w_{000} \sim w_{100} \sim w_{011} \sim w_{111}$ and $w_{001} \sim w_{110}$,
see Figure~\ref{fig:g6c4t2}(b).
Now, $v_{10}$, $v_{11}$ and $w_{011}$ each have one neighbour
among the vertices of $P(v_{00} u_0)$,
and $w_{010}$ can have at most one.
As $w_{100}$ is adjacent to both $v_{10}$ and $w_{011}$,
it follows that the arc $u_0 v_{00}$ has asymmetric type
with $T(u_0 v_{00}) = T(v_{01} u_0) = \{\{1, 1\}, \{1, r\}\}$
for some $r \in \{0, 1\}$.
However, $w_{100}$ is adjacent to $v_{10}$ and $w_{000}$,
contradicting such a type for the arc $v_{01} u_0$.

We must then conclude that $T(u_0 u_1) = \{\{1, 1\}, \{1, 1\}\}$.
Without loss of generality,
we may assume $w_{000} \sim w_{111}$ and $w_{011} \sim w_{100}$.
The remaining two $6$-cycles on $u_0 u_1$
can now be completed in two ways.
First assume $w_{001} \sim w_{110}$ and $w_{010} \sim w_{101}$,
see Figure~\ref{fig:g6c4t1}(a).
As no vertex can be adjacent
to three of the vertices $w_{0ij}$ ($i, j \in \{0, 1\}$),
it follows that the arcs $u_0 v_{00}$ and $u_0 v_{01}$
have the same asymmetric type
$(\{\{2, 0\}, \{x, y\}\}, \{\{1, r\}, \{1, s\}\})$
for some $r, s \in \{0, 1\}$, contradiction.

\begin{figure}[t]
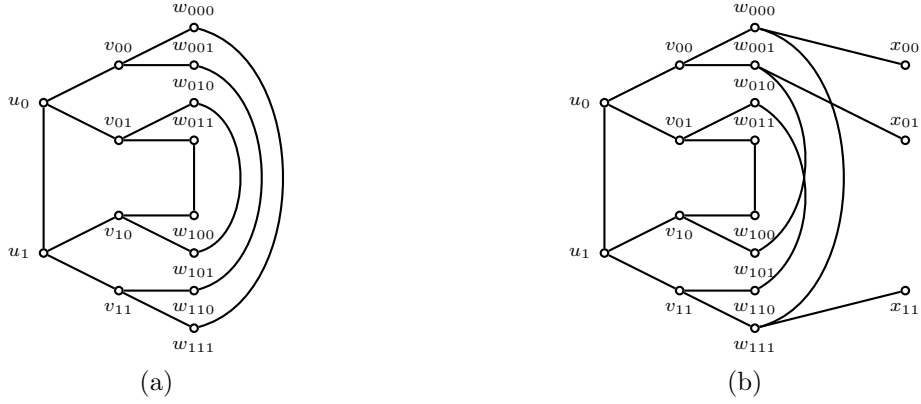

\makebox[\textwidth][c]{
\begin{tabular}{C{0.4}C{0.5}}
\leavevmode%
\beginpgfgraphicnamed{fig-g6-c4d}
\input{tikz/g6-c4d.tikz}
\endpgfgraphicnamed
&
\leavevmode%
\beginpgfgraphicnamed{fig-g6-c4e}
\input{tikz/g6-c4e.tikz}
\endpgfgraphicnamed
\\
(a) & (b)
\end{tabular}
}
\caption{Constructing a graph of girth $6$ with $c = 4$
and $T(u_0 u_1) = \{\{1, 1\}, \{1, 1\}\}$,
with two possibilities for completing the $6$-cycles on $u_0 u_1$.
In (a), $w_{001} \sim w_{110}$ and $w_{010} \sim w_{101}$ is assumed,
which leads to a contradiction.
In (b), $w_{001} \sim w_{101}$ and $w_{010} \sim w_{110}$ is assumed.}
\label{fig:g6c4t1}
\end{figure}

Therefore, we have $w_{001} \sim w_{101}$ and $w_{010} \sim w_{110}$.
Now we have $T(u_0 v_{00}) = T(u_0 v_{01}) = \{\{1, 1\}, \{r, s\}\}$
and $T(v_{00} u_0) = T(v_{01} u_0) = \{\{1, r\}, \{1, s\}\}$
for some $r, s \in \{0, 1\}$.
The arcs $u_0 v_{00}$ and $u_0 v_{01}$ therefore have symmetric types,
so we must have $r = 1$.
Thus, the arcs $u_0 v_{00}$ and $u_0 v_{01}$,
and by symmetry also $u_1 v_{10}$ and $u_1 v_{11}$,
all lie on either three or four $6$-cycles,
i.e., $a = b \in \{3, 4\}$.
In particular, if $a = b = 3$,
then the $2$-paths $v_{00} u_0 v_{01}$ and $v_{11} u_1 v_{10}$
lie on precisely one $6$-cycle each.
Let $x_{00}$ and $x_{01}$
be the remaining negibours
of $w_{000}$ and $w_{001}$,
respectively.
Without loss of generality,
we may then assume that $x_{00}$
is adjacent to a vertex $w_{01i}$ for some $i \in \{0, 1\}$.
By symmetry, $w_{111}$ must also have a common neighbour
with a vertex $w_{10j}$ for some $j \in \{0, 1\}$.
Since neither $x_{00}$ nor $x_{01}$ can be adjacent with $w_{111}$
(as that would give a triangle or a pentagon, respectively),
the common neighbour must be a new vertex $x_{11}$,
see Figure~\ref{fig:g6c4t1}(b).

Let us first assume $w_{010} \sim x_{00}$.
By symmetry, we then also have $w_{101} \sim x_{11}$.
Depending on whether the arc $v_{00} w_{001}$
lies on three or four $6$-cycles,
$x_{01}$ must have a common neighbour
with one or both of $v_{01}$ and $x_{00}$.
If $w_{011} \sim x_{01}$ were true,
then the arcs $u_0 v_{00}$ and $u_0 v_{01}$
would lie on four $6$-cycles,
and by vertex-transitivity,
this would be true of all arcs of $\G$.
It follows that $x_{00}$ and $x_{01}$ must have a common neighbour $y_0$
regardless of this condition.
If $w_{110}$ were adjacent to $x_{01}$,
then, by symmetry, $x_{11}$ would have to be adjacent to $y_0$,
giving us a pentagon.
If, on the other hand, $w_{110}$ were adjacent to $y_0$,
then symmetry would imply $x_{01} \sim x_{11}$,
and the neighbourhoods of all vertices determined so far would be determined,
with the execption of the adjacent vertices $w_{011}$ and $w_{100}$,
which are missing an arc each.
Removing these two vertices from the graph $\G$
would yield a disconnected graph;
since this is not true, say, for the vertex $u_0$ and any of its neighbours,
this contradicts vertex-transitivity of $\G$.
Thus, the remaining neighbour of $w_{110}$ must be a new vertex $x_{10}$,
which, by symmetry, has a common neighbour $y_1$ with $x_{11}$,
see Figure~\ref{fig:g6c4w0}(a).

\begin{figure}[t]
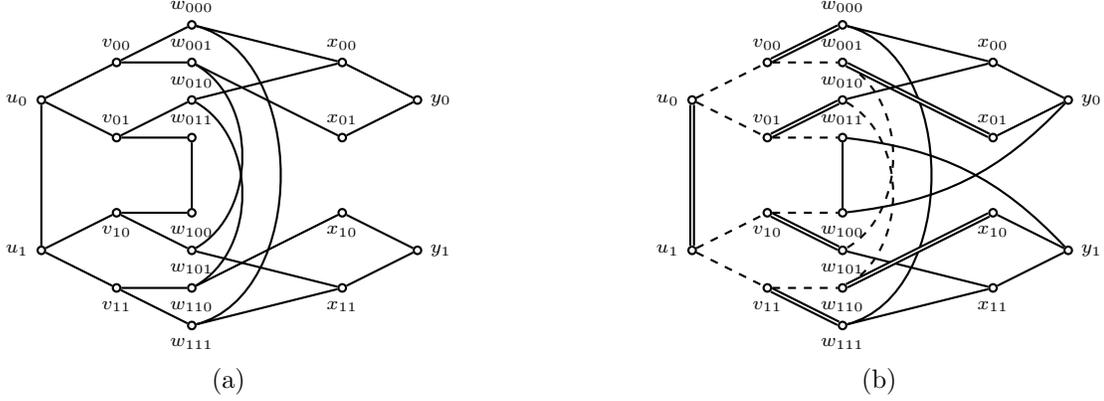

\makebox[\textwidth][c]{
\begin{tabular}{C{0.5}C{0.5}}
\leavevmode%
\beginpgfgraphicnamed{fig-g6-c4f}
\input{tikz/g6-c4f.tikz}
\endpgfgraphicnamed
&
\leavevmode%
\beginpgfgraphicnamed{fig-g6-c4g}
\input{tikz/g6-c4g.tikz}
\endpgfgraphicnamed
\\
(a) & (b)
\end{tabular}
}
\caption{(a) Constructing a graph of girth $6$ with $c = 4$
and $w_{010} \sim x_{00}$, $w_{101} \sim x_{11}$.
(b) Additionally assuming $a = b = 3$.
The dashed and double edges should lie on three and four $6$-cycles,
respectively, however,
this cannot be attained for $w_{001} x_{01}$ and $w_{110} x_{10}$.}
\label{fig:g6c4w0}
\end{figure}

Assume $a = b = 3$,
and let $z_0$ and $z_1$ be the remaining neighbours
of $w_{011}$ and $w_{100}$, respectively.
Then the vertex $v_{00}$ has no common neighbours with $w_{100}$ or $z_0$,
so the arcs $v_{01} w_{011}$ and,
by symmetry, also $v_{10} w_{100}$,
each lie on precisely three $6$-cycles.
As $w_{110}$ cannot have a common neighbour with $w_{100}$
(or $v_{11} u_1 v_{10}$ would lie on two $6$-cycles),
it must have a common neighbour with $z_0$,
which must be the vertex $x_{10}$.
Therefore, we have $y_1 = z_0$, and by symmetry also $y_0 = z_1$,
see Figure~\ref{fig:g6c4w0}(b).
The arcs $v_{01} w_{010}$ and $v_{10} w_{101}$
now lie on four $6$-cycles each.
It follows that $w_{001} w_{101}$ and $w_{010} w_{110}$
should lie on three $6$-cycles each.
As this is also true of $v_{00} w_{001}$ and $v_{11} w_{110}$,
the arcs $w_{001} x_{01}$ and $w_{110} x_{10}$
should lie on four $6$-cycles each,
which, however, cannot be attained.

We thus have $a = b = 4$ and $w_{011} \sim x_{01}$, $w_{100} \sim x_{10}$.
For the arc $w_{000} w_{111}$ to lie on four $6$-cycles,
we must have $y_0 \sim y_1$, which completes the graph.
Figure~\ref{fig:desargues} shows the graph $\G$
and the Desargues configuration
with points and lines labelled with the vertices of $\G$,
showing that $\G$ is indeed its incidence graph,
i.e., it is isomorphic to the Desargues graph.

\begin{figure}[t]
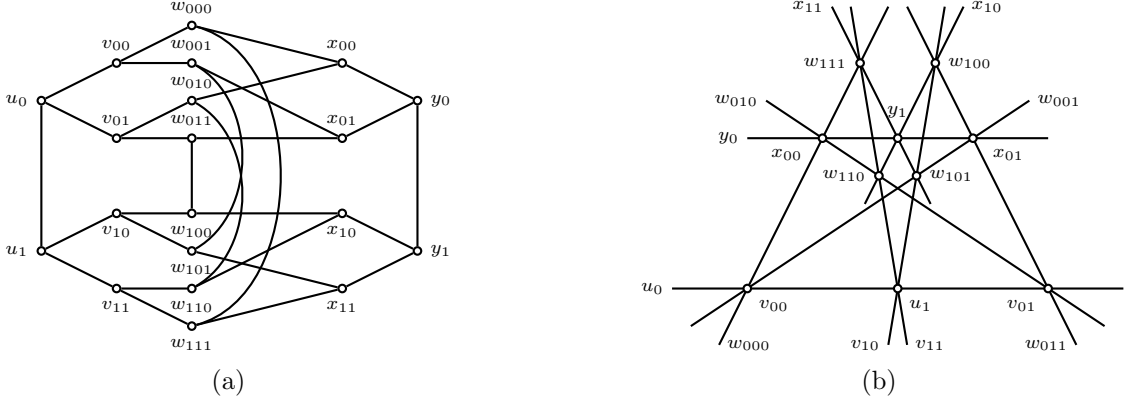

\makebox[\textwidth][c]{
\begin{tabular}{C{0.5}C{0.5}}
\leavevmode%
\beginpgfgraphicnamed{fig-g6-c4h}
\input{tikz/g6-c4h.tikz}
\endpgfgraphicnamed
&
\leavevmode%
\beginpgfgraphicnamed{fig-desargues}
\input{tikz/desargues.tikz}
\endpgfgraphicnamed
\\
(a) & (b)
\end{tabular}
}
\caption{(a) Completing Figure~\ref{fig:g6c4w0}(a)
to obtain the Desargues graph.
(b) The Desargues configuration
with points and lines labelled with the vertex labels of (a).}
\label{fig:desargues}
\end{figure}

Finally, we're left with the case when $w_{011} \sim x_{00}$.
By symmetry, we then also have $w_{100} \sim x_{11}$.
As $v_{01}$ would have no common neighbour with $x_{01}$
if the arc $v_{00} w_{001}$ lay on precisely three $6$-cycles
(or $v_{00} u_0 v_{01}$ would lie on two $6$-cycles),
each of the vertices $w_{111}$ and $x_{00}$
must have a common neighbour with one of $w_{101}$ and $x_{01}$
regardless of this condition.
As $w_{101}$ and $w_{111}$ have no common neighbour,
we must have $x_{01} \sim x_{11}$,
and the common neighbour of $w_{101}$ and $x_{00}$
must be a new vertex $x_{10}$.
By symmetry, we then also have
$w_{011} \sim x_{01}$ and $w_{110} \sim x_{10}$,
which completes the graph.
Figure~\ref{fig:pappus} shows the labelling of vertices of $\G$
with elements of $\Z_3 \times \D_3$,
establishing that $\G$ is isomorphic to $\Sigma_3$,
and the Pappus configuration
with points and lines labelled with the vertices of $\G$,
showing that $\G$ is indeed its incidence graph,
i.e., it is isomorphic to the Pappus graph.
\end{proof}

\begin{figure}[t]
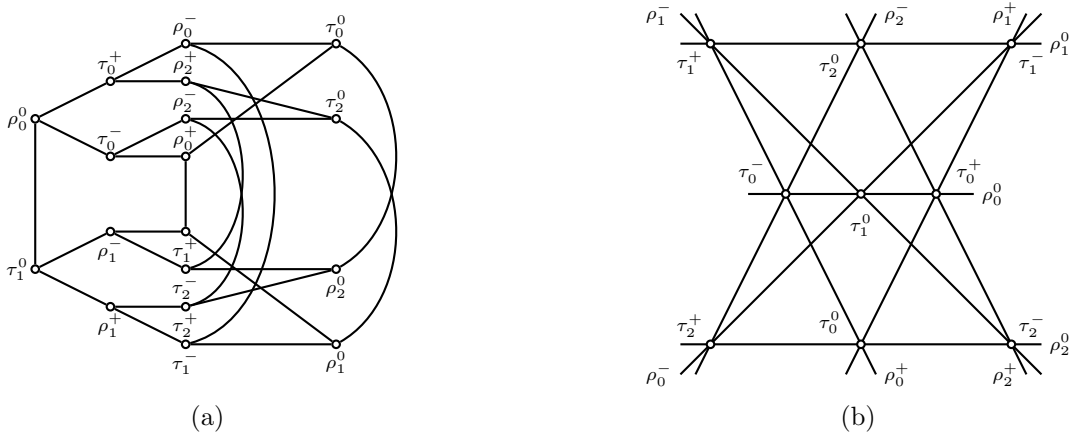

\makebox[\textwidth][c]{
\begin{tabular}{C{0.5}C{0.5}}
\leavevmode%
\beginpgfgraphicnamed{fig-g6-c4i}
\input{tikz/g6-c4i.tikz}
\endpgfgraphicnamed
&
\leavevmode%
\beginpgfgraphicnamed{fig-pappus}
\input{tikz/pappus.tikz}
\endpgfgraphicnamed
\\
(a) & (b)
\end{tabular}
}
\caption{(a) Completing Figure~\ref{fig:g6c4w0}(a)
to obtain the Pappus graph.
(b) The Pappus configuration
with points and lines labelled with the vertex labels of (a).}
\label{fig:pappus}
\end{figure}

\begin{lemma} \label{lem:g6c3}
Let $\G$ be a connected cubic vertex-transitive graph of girth $6$
with signature $(a, b, c)$, where $c = 3$.
Then $a = 2$, $b = 3$
and $\G$ is isomorphic to $\Delta_n$ or $\Sigma_n$ for some $n \ge 4$.
\end{lemma}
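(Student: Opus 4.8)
Our plan follows the template of Lemmas~\ref{lem:g6c6}--\ref{lem:g6c4}: first cut down the list of admissible signatures, then analyse the local structure around a distinguished arc, and finally recognise the resulting graph. Since $c=3$, Lemma~\ref{lem:eventriineq} (the parity of $a+b+c$ and the inequality $a+b\ge c$) together with Lemma~\ref{lem:azero} (which rules out $a=0$ unless the signature is $(0,1,1)$) leaves exactly $(a,b,c)\in\{(1,2,3),(2,3,3)\}$. As the graphs $\Delta_n$ and $\Sigma_n$ with $n\ge 4$ are already known from Section~\ref{ssec:special} to be vertex-transitive of girth $6$ with signature $(2,3,3)$, it remains to show that $(1,2,3)$ cannot occur and that, conversely, every cubic vertex-transitive graph of girth $6$ with signature $(2,3,3)$ is one of the $\Delta_n$ or $\Sigma_n$. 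In both cases $a\notin\{b,c\}$, so by Lemma~\ref{lem:swap} we may take $u_0 u_1$ to be the unique arc at $u_0$ lying on exactly $a$ girth cycles and obtain $\varphi\in\Aut(\G)$ swapping $u_0,u_1$ and acting on Figure~\ref{fig:g6}(a) as described there; being the only arc at $u_0$ with its value of $\epsilon$, the arc $u_0 u_1$ has symmetric type, while $u_0 v_{00}$ and $u_0 v_{01}$ either both have symmetric type or form a mutually inverse pair of asymmetric type (this dichotomy is what will eventually separate $\Sigma_n$ from $\Delta_n$).

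\emph{Eliminating $(1,2,3)$.} Here $T(u_0 u_1)=\{\{1,0\},\{0,0\}\}$, so up to relabelling $w_{000}\sim w_{100}$ is the only edge between the fans $\{w_{0ij}\}$ and $\{w_{1ij}\}$. Tracking the few further adjacencies among second-neighbour vertices that are compatible with girth $6$, and imposing $\epsilon(u_0 v_{00})\in\{2,3\}$ and $\epsilon(u_0 v_{01})\in\{2,3\}$, one more step of propagation (exactly the bookkeeping used in the impossibility arguments of Lemmas~\ref{lem:g6bc5}--\ref{lem:g6c5}) forces a second-neighbour configuration that cannot be realised; hence the signature must be $(2,3,3)$.

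\emph{Local analysis for $(2,3,3)$.} Now $T(u_0 u_1)$ is a symmetric type of weight $2$; the type $\{\{2,0\},\{0,0\}\}$ is excluded since it produces a $4$-cycle, leaving $\{\{1,1\},\{0,0\}\}$ and $\{\{1,0\},\{1,0\}\}$, each with a small number of sub-cases according to how the two rung-edges meet the common-neighbour pairs $\{w_{100},w_{101}\}$ and $\{w_{110},w_{111}\}$. In each surviving sub-case we impose $\epsilon(u_0 v_{00})=\epsilon(u_0 v_{01})=3$; this determines further adjacencies, introduces new vertices $x_{ij}$, and reveals the global picture: the $\epsilon=3$ edges form a disjoint union of hexagons (which carry the ``extra'' $6$-cycles, accounting for the value $3$), the $\epsilon=2$ edges form a perfect matching joining consecutive hexagons by three rungs each, and $\G$ thereby acquires a toroidal embedding of type $\{6,3\}$. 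The configuration is locally rigid, so it propagates along a strip of hexagons, and vertex-transitivity forces the strip to close up after some number $n$ of hexagons; since $n=3$ would give $\Sigma_3$ (Pappus, signature $(4,4,4)$) or $\Delta_3\cong\Psi_9$ (signature $(4,5,5)$), we must have $n\ge 4$.

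\emph{Identification and the main obstacle.} Exactly two closing-up patterns are compatible with vertex-transitivity, distinguished by the dichotomy of the set-up: if the two $\epsilon=3$ arcs at a vertex have symmetric type the strip is the split depleted wreath graph $\SDW(n,3)\cong\Sigma_n$ (equivalently $\Tr(\Lambda,\lr)$ with $\Lambda$ the $n$-cycle with tripled edges, as in Proposition~\ref{prop:autsigma}); if they form a mutually inverse asymmetric pair the strip is $\Delta_n$, the two admissible gluings corresponding to $k=1$ (when $3\mid n$) and $k=3$ (when $3\nmid n$) in $\Delta_n=\Cay(\D_{3n},\{\tau_0,\tau_k,\tau_n\})$. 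Rewriting $\G$ as the corresponding Cayley graph over $\D_n\times\Z_3$ or $\D_{3n}$ then finishes the proof. I expect the main obstacle to be the penultimate step: keeping the sub-case analysis under control and, above all, showing that the local configuration is rigid enough both to force the hexagonal-strip structure and to admit precisely these two global gluings and no others — the residue-mod-$3$ split inside the $\Delta_n$ family and the explicit Cayley-graph identifications being the delicate but routine finish.
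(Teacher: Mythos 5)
Your reduction to the signatures $(1,2,3)$ and $(2,3,3)$ is correct and agrees with the paper, but beyond that the proposal is a plan rather than a proof, and its two load-bearing steps are exactly the ones left unargued. First, the elimination of $(1,2,3)$ is not "one more step of propagation as in Lemmas~\ref{lem:g6bc5}--\ref{lem:g6c5}": the paper needs a dedicated structural argument --- it first proves that no $3$-path lies on two $6$-cycles, then deduces how the numbers of girth cycles alternate along the edges of a fixed $6$-cycle, and only from this alternation derives a contradiction for $a=1$. Nothing in your sketch identifies the configuration that "cannot be realised". Second, for $(2,3,3)$ you assert that the $\epsilon=3$ edges form disjoint hexagons, that the resulting strip of hexagons is rigid and closes up, and that exactly two closings are compatible with vertex-transitivity. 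These are precisely the hard parts: the paper obtains the hexagon-plus-matching structure from a counting argument (each $6$-cycle contains at most two edges of multiplicity two, and a count of cycle--edge incidences combined with vertex-transitivity rules out $6$-cycles containing exactly one such edge), then builds the cylinder row by row, and finally runs a gluing case analysis in which certain identifications (e.g.\ the "crossed" gluing) are excluded by explicit automorphism arguments; the admissible gluings are matched to $\Sigma_n$ and $\Delta_n$ only through explicit relabellings, including the $k=3/\gcd(3,n)$ bookkeeping.

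Moreover, your proposed mechanism for separating the two families --- the two $\epsilon=3$ arcs at a vertex having symmetric type versus forming a mutually inverse asymmetric pair --- cannot work. For large $n$ the graphs $\Sigma_n$ and $\Delta_n$ are locally isomorphic: both are quotients of the same infinite hexagonal strip and differ only in how the strip is closed up, so every arc type, being determined by a ball of bounded radius around the arc, is identical in the two graphs. The distinction is genuinely global and has to come from the closing-up analysis, as in the paper. In short, the global picture you describe is the right one, but the proofs of the key claims (killing $(1,2,3)$, the hexagon-strip structure, and the classification of the closings) are missing, and the local-type dichotomy you rely on for the final identification is false.
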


\begin{proof}
Lemmas~\ref{lem:eventriineq} and~\ref{lem:azero}
imply that we have $a \in \{1, 2\}$ and $b = a + 1$.
It follows that each vertex is the middle point
of three $2$-paths lying on precisely $a-1$, one and two $6$-cycles.

First, let us prove that no $3$-path lies on two $6$-cycles.
Suppose that the $3$-path $uvwx$ lies on two $6$-cycles.
Then we must have vertices $y, z, y', z'$
such that $uzyxy'z'$ is a $6$-cycle,
see Figure~\ref{fig:g6c3}(a).
Now, all arcs with tail $u$ lie on at least two $6$-cycles,
so we have $a = 2$ and $b = 3$.
In particular, each $2$-path lies on at least one $6$-cycle.
But the $2$-paths $uvw$, $uzy$ and $uz'y'$ lie on two $6$-cycles each,
so all arcs with tail $u$ must lie on three $6$-cycles,
contradiction.

\begin{figure}[t]
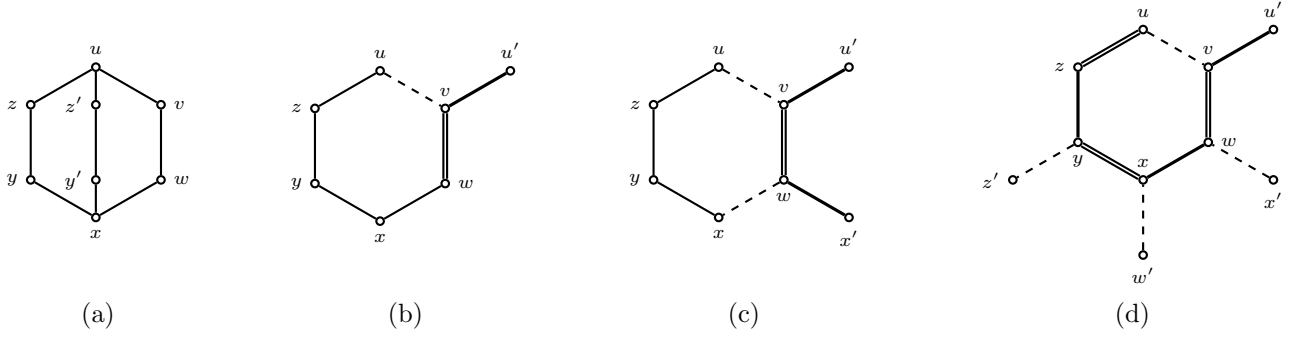

\makebox[\textwidth][c]{
\begin{tabular}{C{0.2}C{0.25}C{0.25}C{0.33}}
\leavevmode%
\beginpgfgraphicnamed{fig-g6-c3a}
\input{tikz/g6-c3a.tikz}
\endpgfgraphicnamed
&
\leavevmode%
\beginpgfgraphicnamed{fig-g6-c3b}
\input{tikz/g6-c3b.tikz}
\endpgfgraphicnamed
&
\leavevmode%
\beginpgfgraphicnamed{fig-g6-c3c}
\input{tikz/g6-c3c.tikz}
\endpgfgraphicnamed
&
\leavevmode%
\beginpgfgraphicnamed{fig-g6-c3d}
\input{tikz/g6-c3d.tikz}
\endpgfgraphicnamed
\\
(a) & (b) & (c) & (d)
\end{tabular}
}
\caption{Constructing a graph of girth $6$ with $c = 3$.
(a) The case when a $3$-path lies on two $6$-cycles,
which leads to a contradiction.
(b) Determining the number of $6$-cycles
the arcs with tail $v$ lie on.
(c) Assuming that both $uv$ and $wx$ lie on $a$ $6$-cycles,
which leads to a contradiction.
(d) Assuming $a = 1$ and $b = 2$, again leading to a contradiction.
The dashed, thick and double edges lie on $a$, $a+1$ and three $6$-cycles,
respectively.}
\label{fig:g6c3}
\end{figure}

Let $H = uvwxyz$ be a $6$-cycle in $\G$.
Suppose that the arc $uv$ lies on precisely $a$ $6$-cycles.
The $2$-path $uvw$ must then lie on precisely one $6$-cycle,
and there is a neighbour $u'$ of $v$
such that the $2$-path $uvu'$ lies on precisely $a-1$ $6$-cycles,
see Figure~\ref{fig:g6c3}(b).
The $2$-path $u'vw$ then lies on two $6$-cycles.

We will now show that the arc $wx$ lies on $a+1$ $6$-cycles.
Suppose that this is not the case.
The arc $wx$ thus lies on $a$ $6$-cycles.
Similarly as before,
the $2$-path $vwx$ also lies on precisely one $6$-cycle
and there is a neighbour $x'$ of $w$
such that the $2$-paths $xwx'$ and $vwx'$
lie on precisely $a-1$ and two $6$-cycles, respectively,
see Figure~\ref{fig:g6c3}(c).
As the $2$-paths $uvw$ and $vwx$ both lie on $H$
and neither of $u'vw$ and $vwx'$ lies on $H$,
it follows that the $3$-path $u'vwx'$ must lie on two $6$-cycles,
contradiction.

Therefore, the edge $wx$ lies on $a+1$ $6$-cycles.
Assume that $a = 1$.
By a similar argument as before,
the arcs $zu$, $yz$ and $xy$
must then lie on three, two and three $6$-cycles,
respectively.
Let $x'$, $w'$ and $z'$ be the remaining neighbours of $w$, $x$ and $y$,
respectively, see Figure~\ref{fig:g6c3}(d).
The $2$-path $x'wx$ cannot lie on any $6$-cycle,
and the $2$-paths $w'xy$ and $xyz'$ must lie on one $6$-cycle each.
By the previous argument,
the latter two $2$-paths must lie on distinct $6$-cycles.
Therefore, the $6$-cycle $H'$ containing the vertices $z'$, $y$, $x$
must also contain the vertex $w$.
However, neither $v$ nor $x'$ can be contained in $H'$ -- contradiction.

It follows that $a = 2$ and $b = 3$,
and each $6$-cycle contains at most $2$ edges
lying on precisely two $6$-cycles.
Let $m$ be the number of edges lying on precisely two $6$-cycles.
Then there are $2m$ edges lying on three $6$-cycles,
and the graph has $2m$ vertices.
As each vertex lies on four $6$-cycles,
the graph $\G$ has precisely $4m/3$ $6$-cycles.
$m$ must then be divisible by $3$
-- in particular, the number of vertices is a multiple of $6$.
Vertex-transitivity implies that
for each two arcs $s, t$ lying on precisely two $6$-cycles,
there is an automorphism $\varphi$ of $\G$ such that $s^\varphi = t$.
Suppose that there is a $6$-cycle
containing a single edge lying on precisely two $6$-cycles.
If each such edge lies on two such $6$-cycles,
then there are $2m > 4m/3$ such $6$-cycles, contradiction.
Therefore, each such edge lies on one $6$-cycle
containing $2$ such edges,
which gives $m$ and $m/2$ $6$-cycles
with $1$ and $2$ such edges, respectively,
again exceeding the total number of $6$-cycles.

It follows that there must be $m$ $6$-cycles
containing $2$ edges lying on two $6$-cycles each,
and $m/3$ $6$-cycles containing no such edges.
Let $W_0 = v_0 w_0 x_0 x_1 w_1 v_1$ be a $6$-cycle
such that the arcs $v_0 v_1$ and $x_0 x_1$ lie on two $6$-cycles each.
Let $u_0$, $y_0$, $u_1$, $w_2$ and $y_1$ be the remaining neighbours
of $v_0$, $x_0$, $v_1$, $w_1$ and $x_1$, respectively.
Since each of the $2$-paths
$w_0 x_0 x_1$, $w_1 x_1 x_0$, $y_0 x_0 x_1$, $y_1 x_1 x_0$
lies on precisely one $6$-cycle,
and the first two lie on $W_0$,
it follows that there must be a $6$-cycle
containing the last two $2$-paths,
say, $Y_0 = x_0 y_0 z_0 z_1 y_1 x_1$.
As the arc $x_0 x_1$ already lies on two $6$-cycles,
$y_1$ cannot be adjacent to $u_0$,
so its remaining neighbour must be a new vertex $y_2$.
By the same argument, $w_2$ is not adjacent to $z_0$
and $y_2$ is not adjacent to $v_0$.
The $6$-cycle containing the $2$-path $x_1 w_1 w_2$
is then $X_1 = w_1 x_1 y_1 y_2 x_2 w_2$,
where $x_2$ is a new vertex.
The second $6$-cycle containing the $2$-path $v_1 w_1 x_1$
must then also contain the vertices $u_1$ and $y_1$,
and by the same argument also the vertex $z_1$.
Therefore, we have $u_1 \sim z_1$,
and by the same argument also $u_0 \sim z_0$.
By similar arguments, we may add new vertices $u_2$, $v_2$ and $z_2$
such that $u_1 \sim u_2$ and $u_2 v_2 w_2 x_2 y_2 z_2$ is a $6$-cycle,
see Figure~\ref{fig:g6c3cyl}(a).

\begin{figure}[t]
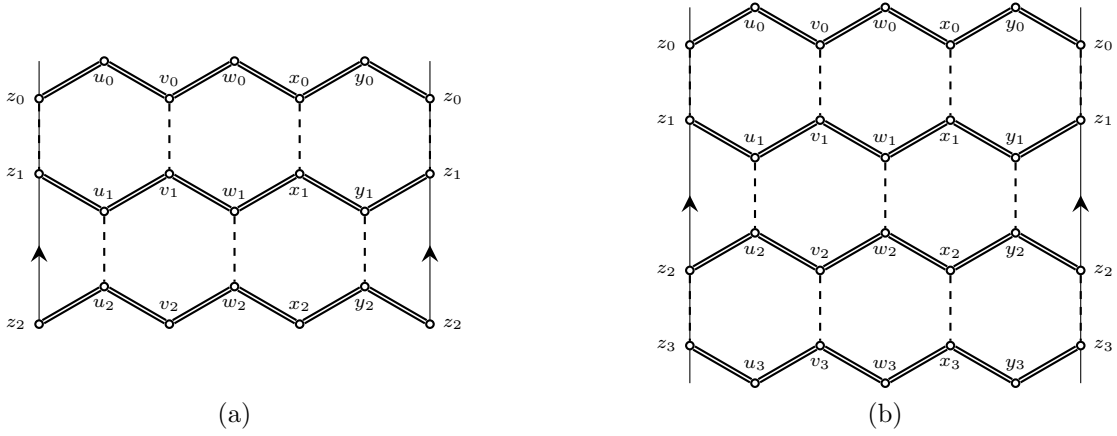

\makebox[\textwidth][c]{
\begin{tabular}{C{0.5}C{0.5}}
\leavevmode%
\beginpgfgraphicnamed{fig-g6-c3e}
\input{tikz/g6-c3e.tikz}
\endpgfgraphicnamed
&
\leavevmode%
\beginpgfgraphicnamed{fig-g6-c3f}
\input{tikz/g6-c3f.tikz}
\endpgfgraphicnamed
\\
(a) & (b)
\end{tabular}
}
\caption{Constructing a graph of girth $6$ with signature $(2, 3, 3)$
embedded on a cylinder.
In (a), the basic structure is shown.
In (b), new vertices have been added
to prevent arcs lying on more than three $6$-cycles.
The dashed and double edges lie on two and three $6$-cycles, respectively.}
\label{fig:g6c3cyl}
\end{figure}

Let $\alpha \in \{u, w, y\}$ and $\beta \in \{v, x, z\}$.
The arcs of form $\alpha_1 \alpha_2$ and $\beta_0 \beta_1$
all already lie on two $6$-cycles,
so we cannot have edges of form $\alpha_0 \beta_2$.
Therefore, we may add a new $6$-cycle $u_3 v_3 w_3 x_3 y_3 z_3$
and arcs of form $\beta_2 \beta_3$,
see Figure~\ref{fig:g6c3cyl}(b).
We have now arrived at a point
where we have determined $12t$ vertices of $\G$ for some $t \ge 2$,
and each of the vertices $\alpha_0$ and $\alpha_{2t-1}$ is missing an arc.

First assume $\alpha_0 \sim \alpha_{2t-1}$ for some $\alpha$
-- without loss of generality, say, $w_0 \sim w_{2t-1}$.
Then each of $u_0$ and $y_0$ must be adjacent
to one of $u_{2t-1}$ and $y_{2t-1}$.
If $u_0 \sim y_{2t-1}$ and $y_0 \sim u_{2t-1}$,
then an automorphism $\varphi$ of $\G$ with $w_0^\varphi = u_0$
has $w_i^\varphi = u_i$ for all $i$ ($0 \le i \le 2t-1$).
We must however also have $w_{2t-1}^\varphi = y_{2t-1}$, contradiction.
Therefore, we have $u_0 \sim u_{2t-1}$ and $y_0 \sim y_{2t-1}$.
Let $n = 2t$.
Identifying the vertices
$u_{2i}$, $v_{2i}$, $w_{2i}$, $x_{2i}$, $y_{2i}$, $z_{2i}$,
$u_{2i+1}$, $v_{2i+1}$, $w_{2i+1}$, $x_{2i+1}$, $y_{2i+1}$ and $z_{2i+1}$
with $\tau_{2i}^+$, $\rho_{2i}^-$, $\tau_{2i}^0$, $\rho_{2i}^+$, $\tau_{2i}^-$, $\rho_{2i}^0$,
$\rho_{2i+1}^+$, $\tau_{2i+1}^-$, $\rho_{2i+1}^0$, $\tau_{2i+1}^+$, $\rho_{2i+1}^-$
and $\tau_{2i+1}^0$ ($0 \le i \le t-1$)
establishes that the graph is isomorphic to $\Sigma_n$,
see for example Figure~\ref{fig:g6c3even}(a).

\begin{figure}[t]
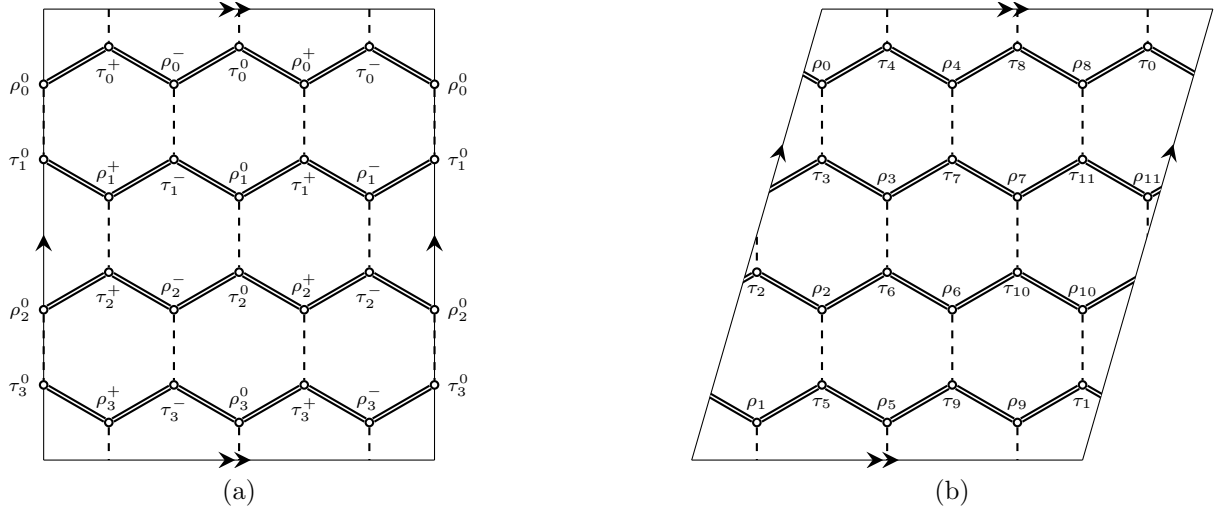

\makebox[\textwidth][c]{
\begin{tabular}{C{0.5}C{0.6}}
\leavevmode%
\beginpgfgraphicnamed{fig-g6-c3g}
\input{tikz/g6-c3g.tikz}
\endpgfgraphicnamed
&
\leavevmode%
\beginpgfgraphicnamed{fig-g6-c3h}
\input{tikz/g6-c3h.tikz}
\endpgfgraphicnamed
\\
(a) & (b)
\end{tabular}
}
\caption{Completing Figure~\ref{fig:g6c3cyl}(b)
to obtain the graphs $\Sigma_4$ in (a) and $\Delta_4$ in (b)
embedded on a torus.
The dashed and double edges lie on two and three $6$-cycles, respectively.}
\label{fig:g6c3even}
\end{figure}

Now assume that $\alpha_0 \sim \alpha'_{2t-1}$
for some distinct $\alpha, \alpha' \in \{u, w, y\}$.
Again, without loss of generality we may assume $u_0 \sim y_{2t-1}$.
Then each of $w_0$ and $y_0$ must be adjacent
to one of $u_{2t-1}$ and $w_{2t-1}$.
By the above argument, $w_0 \sim w_{2t-1}$ is not possible,
so we must have $w_0 \sim u_{2t-1}$ and $y_0 \sim w_{2t-1}$.
Let $n = 2t$ and $k = 3/\gcd(n, 3)$.
Identifying the vertices
$u_{2i}$, $v_{2i}$, $w_{2i}$, $x_{2i}$, $y_{2i}$, $z_{2i}$,
$u_{2i+1}$, $v_{2i+1}$, $w_{2i+1}$, $x_{2i+1}$, $y_{2i+1}$ and $z_{2i+1}$
($0 \le i \le t-1$)
with $\tau_{(2k\mm n)i}$, $\rho_{(2k\mm n)i\mm n}$, $\tau_{(2k\mm n)i\mm n}$,
$\rho_{(2k\mm n)i\pp n}$, $\tau_{(2k\mm n)i\pp n}$, $\rho_{(2k\mm n)i}$,
$\rho_{(2k\mm n)i\pp k\mm n}$, $\tau_{(2k\mm n)i\pp k\mm n}$,
$\rho_{(2k\mm n)i\pp k\pp n}$, $\tau_{(2k\mm n)i\pp k\pp n}$,
$\rho_{(2k\mm n)i\pp k}$ and $\tau_{(2k\mm n)i\pp k\mm n}$
when $n \equiv 2 \pmod{3}$,
and with $\tau_{(2k\mm n)i\pp n}$, $\rho_{(2k\mm n)i\pp n}$,
$\tau_{(2k\mm n)i\mm n}$, $\rho_{(2k\mm n)i\mm n}$, $\tau_{(2k\mm n)i}$,
$\rho_{(2k\mm n)i}$, $\rho_{(2k\mm n)i\pp k}$, $\tau_{(2k\mm n)i\pp k\pp n}$,
$\rho_{(2k\mm n)i\pp k\pp n}$, $\tau_{(2k\mm n)i\pp k\mm n}$,
$\rho_{(2k\mm n)i\pp k\mm n}$ and $\tau_{(2k\mm n)i\pp k}$ otherwise,
establishes that the graph is isomorphic to $\Delta_n$,
see for example Figure~\ref{fig:g6c3even}(b).

If, on the other hand, none of $\alpha_{2t-1}$ are adjacent to known vertices,
we must have a new $6$-cycle $u_{2t} v_{2t} w_{2t} x_{2t} y_{2t} z_{2t}$
and $\alpha_{2t-1} \sim \alpha_{2t}$ for all $\alpha \in \{u, w, y\}$.
If none of $\alpha_0$ is adjacent to known vertices either,
then we must add another $6$-cycle in a similar fashion,
which again gives us the previous case.
Otherwise, first assume, say, $w_0 \sim z_{2t}$.
Then each of $u_0$ and $y_0$ must be adjacent to one of $v_{2t}$ and $x_{2t}$.
If $u_0 \sim v_{2t}$ and $y_0 \sim x_{2t}$,
then an automorphism $\varphi$ of $\G$ with $w_0^\varphi = u_0$
has $w_i^\varphi = u_i$ and then also $z_i^\varphi = x_i$
for all $i$ ($0 \le i \le 2t$).
We must however also have $z_{2t}^\varphi = v_{2t}$, contradiction.
Therefore, we have $u_0 \sim x_{2t}$ and $y_0 \sim v_{2t}$.
Let $n = 2t+1$.
Identifying the vertices
$u_{2i}$, $v_{2i}$, $w_{2i}$, $x_{2i}$, $y_{2i}$, $z_{2i}$
with $\tau_{2i}^+$, $\rho_{2i}^-$, $\tau_{2i}^0$, $\rho_{2i}^+$,
$\tau_{2i}^-$, $\rho_{2i}^0$ ($0 \le i \le t$),
and $u_{2i+1}$, $v_{2i+1}$, $w_{2i+1}$, $x_{2i+1}$, $y_{2i+1}$, $z_{2i+1}$
with $\rho_{2i+1}^+$, $\tau_{2i+1}^-$, $\rho_{2i+1}^0$, $\tau_{2i+1}^+$,
$\rho_{2i+1}^-$, $\tau_{2i+1}^0$ ($0 \le i \le t-1$)
establishes that the graph is isomorphic to $\Sigma_n$,
see for example Figure~\ref{fig:g6c3odd}(a).

\begin{figure}[t]
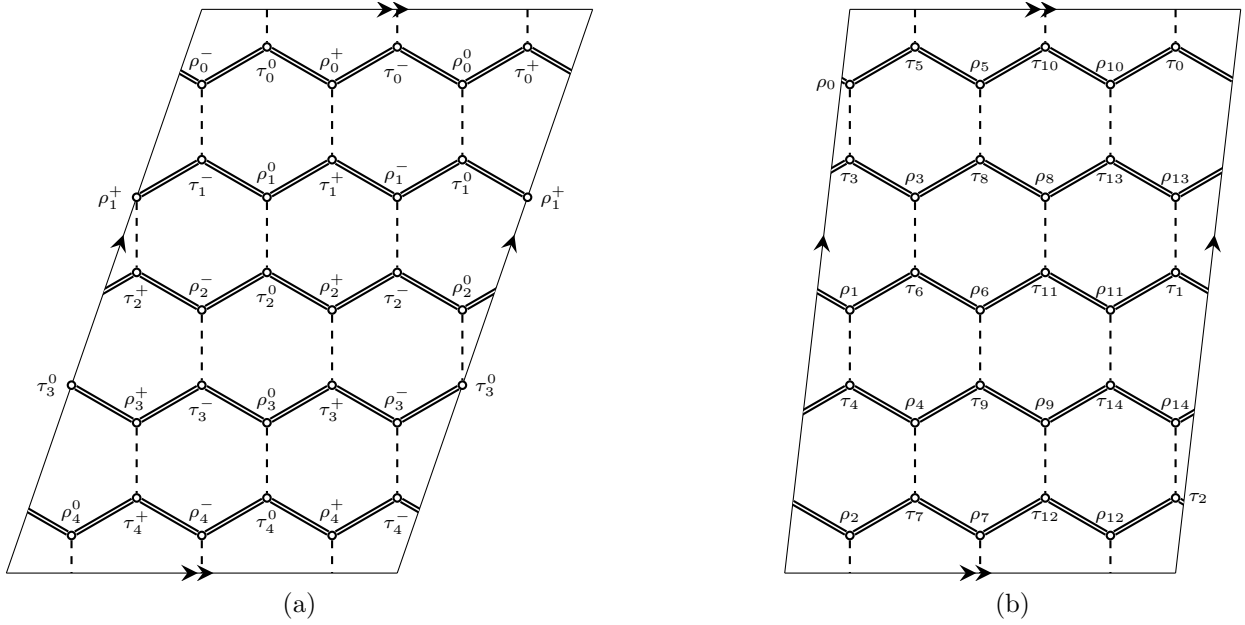

\makebox[\textwidth][c]{
\begin{tabular}{C{0.6}C{0.5}}
\leavevmode%
\beginpgfgraphicnamed{fig-g6-c3i}
\input{tikz/g6-c3i.tikz}
\endpgfgraphicnamed
&
\leavevmode%
\beginpgfgraphicnamed{fig-g6-c3j}
\input{tikz/g6-c3j.tikz}
\endpgfgraphicnamed
\\
(a) & (b)
\end{tabular}
}
\caption{Completing Figure~\ref{fig:g6c3cyl}(b)
to obtain the graphs $\Sigma_5$ in (a) and $\Delta_5$ in (b)
embedded on a torus.
The dashed and double edges lie on two and three $6$-cycles, respectively.}
\label{fig:g6c3odd}
\end{figure}

Finally, assume, say, $y_0 \sim x_{2t}$.
Then each of $u_0$ and $w_0$ must be adjacent
to one of $v_{2t}$ and $z_{2t}$.
By the above argument, $u_0 \sim v_{2t}$ is not possible,
so we must have $u_0 \sim z_{2t}$ and $w_0 \sim v_{2t}$.
Let $n = 2t+1$ and $k = 3/\gcd(n, 3)$.
Identifying the vertices
$u_{2i}$, $v_{2i}$, $w_{2i}$, $x_{2i}$, $y_{2i}$, $z_{2i}$ ($0 \le i \le t$)
and $u_{2i+1}$, $v_{2i+1}$, $w_{2i+1}$, $x_{2i+1}$, $y_{2i+1}$, $z_{2i+1}$
($0 \le i \le t-1$)
with $\tau_{(2k\mm n)i\pp n}$, $\rho_{(2k\mm n)i\pp n}$,
$\tau_{(2k\mm n)i\mm n}$, $\rho_{(2k\mm n)i\mm n}$, $\tau_{(2k\mm n)i}$,
$\rho_{(2k\mm n)i}$ and $\rho_{(2k\mm n)i\pp k}$,
$\tau_{(2k\mm n)i\pp k\pp n}$, $\rho_{(2k\mm n)i\pp k\pp n}$,
$\tau_{(2k\mm n)i\pp k\mm n}$, $\rho_{(2k\mm n)i\pp k\mm n}$,
$\tau_{(2k\mm n)i\pp k}$ when $n \equiv 2 \pmod{3}$,
and with $\tau_{(2k\mm n)i}$, $\rho_{(2k\mm n)i\mm n}$,
$\tau_{(2k\mm n)i\mm n}$, $\rho_{(2k\mm n)i\pp n}$, $\tau_{(2k\mm n)i\pp n}$,
$\rho_{(2k\mm n)i}$ and $\rho_{(2k\mm n)i\pp k\mm n}$,
$\tau_{(2k\mm n)i\pp k\mm n}$, $\rho_{(2k\mm n)i\pp k\pp n}$,
$\tau_{(2k\mm n)i\pp k\pp n}$, $\rho_{(2k\mm n)i\pp k}$,
$\tau_{(2k\mm n)i\pp k\mm n}$ otherwise,
establishes that the graph is isomorphic to $\Delta_n$,
see for example Figure~\ref{fig:g6c3odd}(b).
\end{proof}

We can now wrap up our proof of Theorem~\ref{thm:g6}.
Let $\Gamma$ be a simple connected cubic vertex-transitive graph
and let $(a, b, c)$ be its signature.
Theorem~\ref{thm:cmax} and Lemmas~\ref{lem:g6c7}--\ref{lem:g6c3}
cover the cases when $c \ge 3$.
Of the graphs appearing in these lemmas,
only the Desargues graph does not admit an embedding onto a torus
as a vertex-transitive map of type $\{6, 3\}$.

We are left with the cases when $c \le 2$.
If $(a, b, c) = (2, 2, 2)$,
then, by Theorem~\ref{thm:abc2},
$\G$ is the skeleton of a vertex-transitive map of type $\{6, 3\}$
embedded on a surface of Euler characteristic $\chi = 0$,
i.e., a torus or a Klein bottle.
By Wilson~\cite{w06},
the skeleton of a vertex-transitive map of type $\{6, 3\}$ on a Klein bottle
has girth at most $4$,
so $\G$ must be the skeleton
of a vertex-transitive map of type $\{6, 3\}$ on a torus.
If $(a, b, c) = (1, 1, 2)$,
then, by Theorem~\ref{thm:trieq1},
$\G$ is the skeleton of the truncation
of a connected map of type $\{3, \ell\}$ for some $\ell > 6$.
If $(a, b, c) = (0, 1, 1)$,
then, by Theorem~\ref{thm:azero},
$\G$ is the truncation of a $6$-regular graph $\hat{\G}$
with respect to an arc-transitive dihedral scheme.
This finishes the proof of Theorem~\ref{thm:g6}.


\section{Larger girths}
\label{sec:g7}

We wrap up this paper with a short discussion
on the problem of extending the results proved here
to graphs of larger girth.
It is not surprising that the complexity of the situation
grows with the girth and that several new infinite families arise,
especially those with a small number of girth cycles,
that is, those with signatures $(a,b,c)$ where $c$ is relatively small.
On the other hand, as computational evidence presented below suggests,
further classification results could be obtained
when one restricts to specific signatures
with large values of $c$ and/or $a,b$.
We leave an in-depth analysis of these cases for future work
and instead list the signatures of graphs of girths $7$, $8$ and $9$
appearing in the census of connected cubic vertex-transitive graphs
on at most $1280$ vertices by Poto\v{c}nik, Spiga and Verret~\cite{psv13}.

Tables~\ref{tab:g7},~\ref{tab:g8} and~\ref{tab:g9}
show the number of connected cubic vertex-transitive graphs
with at most $1280$ vertices for each signature $(a, b, c)$ that appears,
and also the number of symmetric graphs among those
-- clearly, the latter all have $a = b = c$,
so a dash is shown in the other rows in the tables.
Note that substantially more signatures appear for girth $8$
than for girths $7$ and $9$
-- this is mainly due to part \eqref{lem:eventriineq:3}
in Lemma~\ref{lem:eventriineq},
which forbids signatures with $c = a + b$ in graphs of odd girths.
By Theorem~\ref{thm:abc2},
the graphs with signatures $(2, 2, 2)$ are skeletons of maps
-- unlike with girths at most $6$,
there are cases of such maps on nonorientable surfaces.

\begin{table}[h]
\makebox[\textwidth][c]{
\begin{tabular}{|c|c|c|c|}
\hline
signature & vertex-transitive & symmetric & comments \\
\hline
$(0, 1, 1)$ & $76$ & $-$ &
truncations of $7$-regular graphs (Theorem~\ref{thm:azero}) \\
$(2, 2, 2)$ & $8$ & $8$ &
skeletons of maps of type $\{7, 3\}$ (Theorem~\ref{thm:abc2}) \\
$(4, 4, 4)$ & $1$ & $1$ & Coxeter graph \\
$(4, 4, 6)$ & $104$ & $-$ & \\
$(4, 5, 5)$ & $3$ & $-$ & \\
\hline
\end{tabular}
}
\caption{Signatures of cubic vertex-transitive graphs of girth $7$.}
\label{tab:g7}
\end{table}

\begin{table}[h]
\makebox[\textwidth][c]{
\begin{tabular}{|c|c|c|c|}
\hline
signature & vertex-transitive & symmetric & comments \\
\hline
$(0, 1, 1)$ & $7262$ & $-$ &
truncations of $8$-regular graphs (Theorem~\ref{thm:azero}) \\
$(1, 1, 2)$ & $3107$ & $-$ &
truncations of maps of type $\{4, \ell\}$ (Theorem~\ref{thm:trieq1}) \\
$(1, 2, 3)$ & $153$ & $-$ & \\
$(2, 2, 2)$ & $457$ & $21$ &
skeletons of maps of type $\{8, 3\}$ (Theorem~\ref{thm:abc2}) \\
$(2, 2, 4)$ & $1083$ & $-$ & \\
$(2, 3, 3)$ & $1033$ & $-$ & \\
$(3, 3, 4)$ & $51$ & $-$ & \\
$(3, 4, 5)$ & $1$ & $-$ & $\Cay(\D_5 \times \Sym_3,
\{(\tau_0, ()), (\tau_1, (1 \ 2)), (\tau_2, (1 \ 3))\})$ \\
$(4, 4, 4)$ & $108$ & $4$ & \\
$(4, 6, 6)$ & $62$ & $-$ & \\
$(5, 5, 6)$ & $207$ & $-$ & \\
$(6, 6, 6)$ & $1$ & $0$ & $\Cay(\GL(2, 3),
\{{0 \ 1 \choose 1 \ 0}, {1 \ 2 \choose 2 \ 0}, {0 \ 2 \choose 2 \ 2}\})$ \\
$(8, 8, 8)$ & $3$ & $2$ & \\
$(16, 16, 16)$ & $1$ & $1$ & Tutte-Coxeter graph (Theorem~\ref{thm:cmax}) \\
\hline
\end{tabular}
}
\caption{Signatures of cubic vertex-transitive graphs of girth $8$.}
\label{tab:g8}
\end{table}

\begin{table}[h]
\makebox[\textwidth][c]{
\begin{tabular}{|c|c|c|c|}
\hline
signature & vertex-transitive & symmetric & comments \\
\hline
$(0, 1, 1)$ & $51$ & $-$ &
truncations of $9$-regular graphs (Theorem~\ref{thm:azero}) \\
$(2, 2, 2)$ & $156$ & $12$ &
skeletons of maps of type $\{9, 3\}$ (Theorem~\ref{thm:abc2}) \\
$(2, 3, 3)$ & $3$ & $-$ & \\
$(4, 4, 4)$ & $2$ & $2$ & \\
$(6, 6, 6)$ & $5$ & $3$ & \\
$(8, 8, 8)$ & $2$ & $1$ & Biggs-Smith graph (symmetric case) \\
\hline
\end{tabular}
}
\caption{Signatures of cubic vertex-transitive graphs of girth $9$.}
\label{tab:g9}
\end{table}


\end{document}